\title{Control of fixed points over discrete $p$-toral groups, and existence and uniqueness of linking systems}
\author{R\'emi Molinier}
\address{}
\email{remi.molinier@univ-grenoble-alpes.fr}
\urladdr{}
\newtheorem{thm}{Theorem}[section]    
\newtheorem{lem}[thm]{Lemma}         
\newtheorem{prop}[thm]{Proposition}
\newtheorem{cor}[thm]{Corollary}
\newtheorem{fact}[thm]{}
\newtheorem{thm*}{Theorem}
\newtheorem{prop*}{Proposition}
\theoremstyle{definition}
\newtheorem{defi}[thm]{Definition}
\newtheorem{case}{Case}
\newtheorem{rem}[thm]{Remark}
\numberwithin{equation}{subsection}
\newcommand{\Fc}{\mathcal{F}}
\newcommand{\Li}{\mathcal{L}}
\newcommand{\T}{\mathcal{T}}
\newcommand{\Bc}{\mathcal{B}}
\newcommand{\A}{\mathcal{A}}
\newcommand{\Cc}{\mathcal{C}}
\newcommand{\Sc}{\mathcal{S}}
\newcommand{\Rc}{\mathcal{R}}
\newcommand{\Zc}{\mathcal{Z}}
\newcommand{\Or}{\mathcal{O}}
\newcommand{\Qc}{\mathcal{Q}}
\newcommand{\Nc}{\mathcal{N}}
\newcommand{\Wc}{\mathcal{W}}
\newcommand{\limproj}[1]{\lim\limits_{\substack{\longleftarrow \\ #1}}}
\newcommand{\Hom}{\text{Hom}}
\newcommand{\Aut}{\text{Aut}}
\newcommand{\Out}{\text{Out}}
\newcommand{\Inn}{\text{Inn}}
\newcommand{\Iso}{\text{Iso}}
\newcommand{\Mor}{\text{Mor}}
\newcommand{\Id}{\text{Id}}
\newcommand{\im}{\text{Im}\,}
\newcommand{\Syl}{\text{Syl}}
\newcommand{\ord}{\text{ord}}
\newcommand{\Ab}{\text{$\mathcal{A}$b}}
\begin{document}

\begin{abstract}  
The existence and uniqueness of linking systems associated to saturated fusion systems over discrete $p$-toral groups was proved by Levi and Libman. Their proof makes indirect use of the classification of the finite simple groups. Here we extend some results and arguments of Glauberman and Lynd to show how to remove this assumption.  
\end{abstract}

\maketitle


The notion of $p$-local compact groups is due to Broto, Levi and Oliver \cite{BLO3}. It encodes in an algebraic way the $p$-local homotopy theory of compact Lie groups, $p$-compact groups, and some other families of similar nature. In particular, the theory of $p$-local compact groups includes the theory of $p$-local finite groups (see \cite{BLO2} or \cite[Chapter III]{AKO}). Specifically, a $p$-local compact group is a triple $(S,\Fc,\Li)$ where $S$ is a discrete $p$-toral group, $\Fc$ is a saturated fusion system over $S$, and $\Li$ is an associated centric linking system. One basic question about $p$-local compact groups is the following: given a saturated fusion system over a $p$-toral group $S$, is there an associated linking system, and if so, is this linking system unique?

In the theory of $p$-local finite groups, Chermak was the first to succeed in answering these questions. In \cite{Ch}, using the theory of partial groups and localities, he shows that, given a saturated fusion system over a finite $p$-group $S$, there exists an associated centric linking system which is unique up to isomorphism. His theory gives also a totally new and more group-theoretic approach to $p$-local finite groups. Later, Oliver \cite{O5} translated Chermak's proof into the language of obstruction theory, and then Levi and Libman \cite{LL} extended Oliver's proof to $p$-local compact groups. Thus, we have the following theorem.

\begin{thm*}[{\cite[Theorem B]{LL}}]\label{A}
Let $\Fc$ be a saturated fusion system over a discrete $p$-toral group. Then there exists a centric linking system $\Li$ associated to $\Fc$, and $\Li$ is unique up to isomorphism.
\end{thm*}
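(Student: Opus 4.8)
The plan is to run the obstruction-theoretic machinery for linking systems in the form extended to discrete $p$-toral groups by Levi and Libman, feeding into it a classification-free version of the fixed-point results of Glauberman and Lynd. Associated to the saturated fusion system $\Fc$ over the discrete $p$-toral group $S$ is the centre functor $\Zc_\Fc$ on the orbit category $\Or(\Fc^c)$ of the $\Fc$-centric subgroups, with $\Zc_\Fc(P)=Z(P)$. By \cite{LL} (following \cite{O5}), the obstruction to the existence of a centric linking system associated to $\Fc$ lies in $\lim^3_{\Or(\Fc^c)}(\Zc_\Fc)$, and, once existence is known, the obstruction to uniqueness up to isomorphism lies in $\lim^2_{\Or(\Fc^c)}(\Zc_\Fc)$. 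So it suffices to prove that $\lim^i_{\Or(\Fc^c)}(\Zc_\Fc)=0$ for $i=2,3$, and the whole issue is to do so without invoking the classification of the finite simple groups.

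I would next use the reduction of these higher limits to local data. There are only finitely many $\Fc$-conjugacy classes of $\Fc$-centric $\Fc$-radical subgroups (by \cite{BLO3}), and --- as in \cite{O5}, carried over to the discrete $p$-toral setting in \cite{LL} --- filtering $\Or(\Fc^c)$ by $\Fc$-conjugacy and analyzing each stratum shows that $\lim^i_{\Or(\Fc^c)}(\Zc_\Fc)$ vanishes in the relevant degrees provided $\Lambda^j(\Out_\Fc(P);Z(P))=0$ for all $\Fc$-centric subgroups $P$ and all $j\geq 1$, where $\Lambda^*(G;M)$ denotes Oliver's derived functor (the higher limits over the orbit category of $p$-subgroups of $G$ of the coefficient system concentrated, with value $M$, at the trivial subgroup). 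Since $\Lambda^*(G;M)=0$ in positive degrees whenever $O_p(G)\neq 1$, only the $\Fc$-radical subgroups contribute, and one is reduced to proving
\[
\Lambda^i\bigl(\Out_\Fc(P);Z(P)\bigr)=0 \quad \text{for all } i\geq 1
\]
for every $\Fc$-centric $\Fc$-radical subgroup $P\leq S$, where $\Out_\Fc(P)$ --- a finite group with $O_p(\Out_\Fc(P))=1$ --- acts on the (possibly infinite) discrete $p$-toral abelian group $Z(P)$, and where by saturation $\Out_S(P)$ is a Sylow $p$-subgroup of $\Out_\Fc(P)$.

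This is where the arguments of Glauberman and Lynd enter, and where passing to discrete $p$-toral groups is genuinely needed. In the finite case (a finite group $G$ with $O_p(G)=1$ and a finite $\mathbb{F}_p[G]$-module $V$) one knows that $\Lambda^*(G;V)$ can be nonzero only when $V$ admits an offender --- an elementary abelian $p$-subgroup $A\leq G$ with $|A|\cdot|C_V(A)|\geq|V|$ --- and Glauberman and Lynd show that, when offenders are present, one may pass to a best quadratic offender (Timmesfeld) and then exploit sufficient control on the fixed-point subspaces $C_V(-)$ to force the vanishing anyway; the group theory used (the replacement theorems of Thompson and Timmesfeld, the structure of quadratic offenders, and extra arguments at $p\in\{2,3\}$ where $SL_2(p)$ intervenes) is local and free of the classification. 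Here $V=Z(P)$ is in general not a finite $\mathbb{F}_p$-module: it has a divisible part $(\mathbb{Z}/p^\infty)^r$. One can try to strip this away --- $Z(P)$ is $p$-torsion, so $Z(P)=\varinjlim_n Z(P)[p^n]$ with each $Z(P)[p^n]$ finite, and $\Lambda^*(G;-)$, being a finite iterated limit, commutes with this filtered colimit, after which a dévissage along multiplication by $p$ lands in finite $\mathbb{F}_p[G]$-modules --- but keeping the offender and fixed-point bookkeeping compatible with these colimits is not automatic, and it seems cleaner to set up the notion of offender, the replacement theorems of Thompson and Timmesfeld, and the resulting fixed-point bounds directly for the action of a finite group on a discrete $p$-toral abelian group.

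The main obstacle is precisely that: establishing the Glauberman--Timmesfeld-type replacement and control-of-fixed-points theorems for the action of a finite group on a discrete $p$-toral abelian group, without the classification, and verifying that saturation of $\Fc$ --- in particular $\Out_S(P)\in\Syl_p(\Out_\Fc(P))$ --- supplies the hypotheses these theorems require. Everything else is assembly: granting this ``control of fixed points over discrete $p$-toral groups'', the vanishing of $\Lambda^i(\Out_\Fc(P);Z(P))$ follows on the pairs arising from $\Fc$ exactly as in the finite case of Glauberman and Lynd, hence $\lim^2_{\Or(\Fc^c)}(\Zc_\Fc)=\lim^3_{\Or(\Fc^c)}(\Zc_\Fc)=0$, and the obstruction theory of \cite{LL} then yields both the existence of a centric linking system $\Li$ associated to $\Fc$ and its uniqueness up to isomorphism.
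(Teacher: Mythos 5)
Your opening paragraph is right: the obstruction-theoretic reduction to $\lim^2\Zc=\lim^3\Zc=0$ over $\Or(\Fc^c)$ is exactly the Levi--Libman starting point, and the genuinely new technical input is a classification-free extension of the Glauberman--Lynd control-of-fixed-points theorems to actions of a finite group on an abelian \emph{discrete $p$-toral} group. But the middle step of your argument — the reduction to $\Lambda^j(\Out_\Fc(P);Z(P))=0$ for all $\Fc$-centric radical $P$ and all $j\geq 1$ — is not a reduction at all, because that hypothesis is false. These $\Lambda$-groups are typically nonzero: for instance $\Lambda^1(S_3;V)\neq 0$ when $V$ is the natural $\F_2[S_3]$-module, and $\Lambda^{n-1}(SL_n(2);V)\neq 0$ for $V$ the natural $SL_n(2)$-module, and groups and modules of exactly this type occur as $(\Out_\Fc(P),Z(P))$ for $\Fc$-centric radical $P$. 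The theorem $\lim^i\Zc=0$ ($i\geq 2$, $p=2$) is true \emph{despite} the nonvanishing of individual $\Lambda$'s: the cancellations between conjugacy-class strata are essential, so no stratum-by-stratum argument can work. Neither Glauberman--Lynd nor Levi--Libman prove or use vanishing of $\Lambda^*$.

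What the argument actually does (Chermak, then Oliver, then Levi--Libman, then Glauberman--Lynd and this paper) is reduce to the setting of a ``general setup'' $(\Gamma,S,Y)$ with $\Gamma$ a virtually discrete $p$-toral group, and then run a double induction over $(\ord(\Gamma),|\Gamma/Y|,|\Rc|)$ on the truncated limits $L^k(\Fc_S(\Gamma);\Rc)$ for $\Fc$-invariant intervals $\Rc\subseteq\Sc_p(S)_{\geq Y}$, using the long exact sequences of Lemma \ref{exactseq} and the degree-shifting isomorphisms they give. The control-of-fixed-points input enters at a very specific point: one must show $C_{Z(Y)}(\Gamma)=C_{Z(Y)}(\Gamma^*)$ (which kills $L^1$ by the short exact sequence of Lemma \ref{exactseq}(b)), and for $p=2$ one must additionally show by hand that inclusion-normalized $1$-cocycles for $\Zc^\Qc_\Fc$ are coboundaries by analyzing the associated ``rigid maps'' via conjugacy functors, well-placed subgroups, and — in the solitary case — McLaughlin's classification of transvection subgroups of $SL_n(2)$. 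So the missing idea in your write-up is the entire inductive apparatus over general setups and intervals; the control-of-fixed-points results have to be plugged into that machinery, not into a (nonexistent) $\Lambda^*$-vanishing statement.
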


Chermak's proof of the existence and uniqueness of centric linking systems uses indirectly the classification of finite simple groups. In the same way, the proof of Oliver and the extension to $p$-local compact groups by Levi and Libman also rely on the classification of finite simple groups. Recently, Glauberman and Lynd \cite{GL} succeeded, using a result of Glauberman \cite{Gl71} on control of fixed points, in giving a classification-free proof of the $p$-local finite version of Theorem \ref{A}.

In this article, we present an extension of arguments of Glauberman and Lynd to the more general setup of $p$-local compact groups, and thus completing the process of removing the classification of finite simple groups from the proof of Theorem \ref{A}. 

Oliver's obstruction theory, and its extension to $p$-local compact groups by Levi and Libman, give a link between the existence and uniqueness of linking systems and a vanishing property of higher limits of a certain functor. For $\Fc$ a saturated fusion system over a discrete $p$-toral group $S$, let $\Or(\Fc^c)$ be the associated centric orbit category and let $\Zc:\Or(\Fc^c)^\text{op}\rightarrow \Ab$ denote the contravariant functor that associates to a subgroup its center. The existence and uniqueness of centric linking systems can be translated as follows in terms of obstruction theory.

\begin{prop*}[{\cite[Proposition 1.7]{LL}}]\label{00}
Let $\Fc$ be a saturated fusion system over a discrete $p$-toral group $S$. An associated centric linking system exists if $\limproj{\Or(\Fc^c)}{}^{\hspace{-0.5em}3} \Zc=0$, and if also $\limproj{\Or(\Fc^c)}{}^{\hspace{-0.5em}2} \Zc=0$, then that linking system is unique up to an isomorphism.
\end{prop*}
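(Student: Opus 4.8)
The plan is to transport Oliver's obstruction-theoretic analysis of centric linking systems from the finite case, reducing the discrete $p$-toral case to the finite one by means of the bullet construction of Broto--Levi--Oliver \cite{BLO3}. The two hypotheses should govern, respectively, the vanishing of a canonical obstruction class controlling existence and the collapse of a $\varprojlim^{2}$-indexed ambiguity controlling uniqueness.

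First I would encode a centric linking system cohomologically. Recall that such an $\Li$ is a category with object set $\Fc^c$, a projection $\pi\colon\Li\to\Or(\Fc^c)$, and distinguished monomorphisms $\delta_P\colon P\to\Aut_\Li(P)$, whose morphism sets are disjoint unions of free transitive $Z(P)$-sets; equivalently, $\Li$ amounts to the data of a rigidification of the homotopy functor $\widetilde B\colon\Or(\Fc^c)\to\mathrm{hoTop}$, $P\mapsto BP$. Choosing, for each pair of $\Fc$-centric subgroups, a set-theoretic lift of every $\Fc$-conjugacy class of homomorphisms realizes candidate morphism sets together with a composition law for such a category; that law is associative only up to a central element, and the resulting defect is a normalized $3$-cocycle in the cochain complex computing $\varprojlim^{*}_{\Or(\Fc^c)}\Zc$. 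One then checks: its cohomology class $[\mu_\Fc]\in\varprojlim^{3}_{\Or(\Fc^c)}\Zc$ depends only on $\Fc$; one has $[\mu_\Fc]=0$ if and only if the lifts can be adjusted by a $2$-cochain so as to yield a genuine centric linking system associated to $\Fc$; and two such adjustments produce isomorphic linking systems whenever their difference $2$-cocycle is a coboundary, so that the set of isomorphism classes of centric linking systems associated to $\Fc$ is, when nonempty, a quotient of $\varprojlim^{2}_{\Or(\Fc^c)}\Zc$ --- hence a single point when that group vanishes. (Equivalently this is the standard obstruction theory for rigidifying the homotopy-commutative diagram $\widetilde B$ into aspherical spaces, using that the mapping spaces $\mathrm{map}(BP,BQ)_\varphi\simeq BC_Q(\varphi P)$ are aspherical and that the pertinent coefficient functor is $P\mapsto\pi_1(\mathrm{map}(BP,BP)_{\mathrm{id}})=Z(P)=\Zc(P)$; alternatively one builds $\Li$ inductively over the $\Fc$-conjugacy classes of $\Fc$-centric subgroups, the stepwise obstructions assembling into these higher limits through the filtration spectral sequence. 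In the finite case this is \cite{BLO2,O5}.)

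The new feature over a discrete $p$-toral group is that $\Or(\Fc^c)$ is not finite, so the cochain complex above is infinite and the inductive construction would range over infinitely many conjugacy classes. I would circumvent this with the bullet construction: there is a finite full subcategory $\Or(\Fc^{\bullet c})\subseteq\Or(\Fc^c)$ --- spanned by the $\bullet$-closed $\Fc$-centric subgroups, of which there are finitely many up to $\Fc$-conjugacy --- for which restriction induces isomorphisms $\varprojlim^{*}_{\Or(\Fc^c)}\Zc\xrightarrow{\ \cong\ }\varprojlim^{*}_{\Or(\Fc^{\bullet c})}\Zc$ in all degrees, and for which restriction and extension set up a bijection between centric linking systems associated to $\Fc$ and those associated to $\Fc^\bullet$ (see \cite{BLO3}). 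Granting this, both $[\mu_\Fc]$ and the classification above may be computed over the finite category $\Or(\Fc^{\bullet c})$, where the argument is formally the finite-group one, and then transported back to $\Fc^c$.

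I expect the main obstacle to be precisely this reduction: checking that the bullet construction is compatible with the obstruction theory --- that the isomorphisms $\varprojlim^{*}_{\Or(\Fc^c)}\Zc\cong\varprojlim^{*}_{\Or(\Fc^{\bullet c})}\Zc$ carry $[\mu_\Fc]$ to $[\mu_{\Fc^\bullet}]$, and that extension and restriction of linking systems are compatible with the identification of the $\varprojlim^{2}$-ambiguity --- together with the structural inputs, due to Broto--Levi--Oliver, that $\Fc^\bullet$ has finitely many $\Fc$-conjugacy classes of objects and that mapping spaces between classifying spaces of discrete $p$-toral groups retain their expected aspherical homotopy type. Once these facts about discrete $p$-toral fusion systems are available, the cohomological bookkeeping is identical to that of the finite case.
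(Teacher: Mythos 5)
This proposition is stated in the paper as a citation of \cite[Proposition 1.7]{LL} and is not reproved here; your sketch reconstructs the standard obstruction-theoretic argument (BLO2/BLO3 style, reduced to a finite full subcategory via the bullet construction) that underlies that cited result, and it is essentially the same route. The one small imprecision is that the set of isomorphism classes of associated linking systems, when nonempty, is naturally identified with a \emph{quotient} of $\limproj{\Or(\Fc^c)}{}^{2}\Zc$ by the action coming from automorphisms of the linking system --- so vanishing of $\limproj{\Or(\Fc^c)}{}^{2}\Zc$ does give uniqueness, but the identification is not generally with the full group; you in fact phrased it correctly as a quotient, so no gap results.
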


Using the ideas of Chermak, Oliver \cite{O5} in the $p$-local finite group case, and Levi and Libman \cite{LL} in the general case, proved the following Theorem which implies Theorem \ref{A}.

\begin{thm*}\label{0}
Let  $\Fc$ be a saturated fusion system over  a discrete $p$-toral group $S$. Then, for all $i>1$ if $p=2$, and all $i>0$ otherwise, 
\[\limproj{\Or(\Fc^c)}{}^{\hspace{-0.5em}i} \Zc=0.\]
\end{thm*}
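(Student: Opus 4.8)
The plan is to follow the obstruction-theoretic proof of Oliver \cite{O5}, extended to discrete $p$-toral groups by Levi and Libman \cite{LL}, and to replace its single appeal to the classification of finite simple groups by the fixed-point arguments of Glauberman and Lynd \cite{GL}. The first step is a reduction which is entirely formal and classification-free, and is the content of \cite{LL} (following \cite{O5}): since $\Fc$ has only finitely many $\Fc$-conjugacy classes of $\Fc$-centric subgroups and each $\Out_\Fc(P)$ is finite, $\Or(\Fc^c)$ is equivalent to a finite category, and filtering it by the poset of $\Fc$-conjugacy classes of $\Fc$-centric subgroups (ordered by subconjugacy) gives a spectral sequence which reduces the vanishing of $\limproj{\Or(\Fc^c)}{}^{\hspace{-0.5em}i}\Zc$ in a given range of degrees to that of the groups $\Lambda^i(\Out_\Fc(Q);Z(Q))$ in the same range, where $Q$ runs over the $\Fc$-conjugacy classes of $\Fc$-centric-radical subgroups and $\Lambda^i(\Gamma;M)$ denotes the $i$-th higher limit over the orbit category of $p$-subgroups of a finite group $\Gamma$ of the functor concentrated at the trivial subgroup with value a $\Gamma$-module $M$. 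Only radical $Q$ enter because $\Lambda^*(\Gamma;-)=0$ as soon as $O_p(\Gamma)\neq 1$, and only centric $Q$ occur because these are the objects of $\Or(\Fc^c)$. So it suffices to show
\[
\Lambda^i\bigl(\Out_\Fc(Q);Z(Q)\bigr)=0\qquad\text{for every $\Fc$-centric-radical $Q$, all $i>1$ if $p=2$ and all $i>0$ if $p$ odd.}
\]

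Here $\Gamma:=\Out_\Fc(Q)$ is a finite group with $O_p(\Gamma)=1$ having $\Out_S(Q)$ as a Sylow $p$-subgroup (after replacing $Q$ by a fully normalized $\Fc$-conjugate), just as in the $p$-local finite case; the one genuinely new feature is that the coefficient module $Z(Q)$ is now a discrete $p$-toral abelian group, possibly with a nontrivial divisible part. In the finite case Oliver deduced the displayed vanishing from the classification, and Glauberman and Lynd reproved it using Glauberman's theorem on control of fixed points \cite{Gl71}; their argument is not merely a statement about the abstract pair $(\Gamma,M)$ --- it uses that $(\Gamma,M)=(\Out_\Fc(Q),Z(Q))$ comes from an $\Fc$-centric-radical subgroup $Q$, and it feeds Glauberman's fixed-point result into the $p$-group $Q$ itself. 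The plan is therefore to transcribe the inductive reduction of \cite{GL} --- which uses control of fixed points in place of the case analysis of almost simple groups in \cite{O5} --- to the discrete $p$-toral setting, once one has the analogue of Glauberman's theorem for $Q$ a discrete $p$-toral group.

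The technical heart, and the step I expect to be the main obstacle, is exactly this control-of-fixed-points theorem for discrete $p$-toral groups: for a discrete $p$-toral group $P$ and a finite group $A$ of automorphisms of $P$ of order prime to $p$, the action of $A$ on $P$ should be controlled, in the sense of \cite{Gl71}, by its action on a canonical small section built from the $p$-torsion of $Z(P)$. Glauberman's proof, and its use in \cite{GL}, is an induction on $|P|$ that repeatedly passes to proper subgroups, quotients, and Frattini quotients, using throughout that these remain finite $p$-groups with the usual Sylow and coprime-action theory. Over a discrete $p$-toral $P$ with identity component $P_0\cong(\mathbb{Z}/p^\infty)^r$ one must instead argue by a Noetherian induction --- for instance on the pair $(r,|P/P_0|)$ with the lexicographic order, which is legitimate since discrete $p$-toral groups satisfy the descending chain condition on subgroups --- and one must treat the torus $P_0$ by hand: coprime action gives the splitting $P_0=C_{P_0}(A)\oplus[P_0,A]$ into divisible summands, but divisible groups behave unlike finite $p$-groups for the Frattini-type and generation arguments that pervade Glauberman's proof (for instance $\Phi(P_0)=P_0$), so each such step must be re-examined. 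That $\Aut(P)$ is itself no longer discrete $p$-toral --- already $\Aut((\mathbb{Z}/p^\infty)^r)\cong\mathrm{GL}_r(\mathbb{Z}_p)$ --- is a minor point, since the operator group $A$ is finite of order prime to $p$ and the coprime-action arguments are unaffected; but one does have to check that all the relevant constructions are compatible with approximating $P$ by finite subgroups, so that the finite statements of \cite{Gl71} and \cite{GL} can be fed in. Once this control-of-fixed-points theorem is in place, the remaining steps --- the reduction of \cite{GL} and the obstruction theory of \cite{O5} and \cite{LL} --- should carry over without essential change.
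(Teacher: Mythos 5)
Your high-level description of the strategy — remove the classification from the Oliver/Levi--Libman proof by substituting Glauberman--Lynd's fixed-point arguments — is right, but the two technical steps you propose diverge from what the paper (and Oliver, Levi--Libman, Glauberman--Lynd) actually do, and the main difficulty you anticipate is not where you think it is.

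First, the reduction. The paper does not pass through the groups $\Lambda^i(\Out_\Fc(Q);Z(Q))$. The reduction it invokes is the Chermak/Oliver one, as adapted by Levi--Libman in \cite[Theorem 3.7]{LL}: one reduces the vanishing of $\limproj{\Or(\Fc^c)}{}^{\hspace{-0.5em}i}\Zc$ to the vanishing of $L^k(\Fc_S(\Gamma);\Rc)$ for general setups $(\Gamma,S,Y)$ and $\Fc$-invariant intervals $\Rc\subseteq\Sc_p(S)_{\geq Y}$ defined by a Thompson-type subgroup $J_{\A_D(G)}(Q)$, with $D=Z(Y)$ and $G=\Gamma/C_\Gamma(D)$. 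That is the content of Propositions \ref{oddcase} and \ref{6.9}. The $\Lambda$-functor route, which works well for establishing vanishing one conjugacy class at a time, is not the route any of these sources take (and it is not clear it gives the full theorem without further input).

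Second and more importantly, the control-of-fixed-points input is mis-identified, and this makes the anticipated difficulty evaporate. The relevant theorem is Glauberman's Norm Argument \cite[Theorem A.1.4]{Gl71}, restated here as Theorem \ref{FiniteNormArg1}: a \emph{finite} group $G$ with $S\in\Syl_p(G)$ acts on an abelian $p$-group, and under the norm condition on a weakly closed $J=\langle\A\rangle$ one gets $C_P(G)=C_P(N_G(J))$. In the discrete $p$-toral setting, the acting group $G=\Gamma/C_\Gamma(D)$ is \emph{still finite}; only the coefficient group $D=Z(Y)$ is an abelian discrete $p$-toral group. So no analogue of Glauberman's theorem for a coprime group of automorphisms of a discrete $p$-toral $P$ is needed, and there is no Noetherian induction on the pair $(\mathrm{rk}(P_0),|P/P_0|)$, no separate treatment of the divisible torus, no re-examination of Frattini-quotient arguments. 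Since $G$ is finite and $D$ is an abelian discrete $p$-toral group, $D=\bigcup_n\Omega_n(D)$ with each $\Omega_n(D)$ a finite abelian $G$-invariant $p$-group, and one simply applies the finite theorem to $\Omega_n(D)$ for $n$ large and passes to the union (Corollary \ref{NormArg1}); Lemma \ref{action on discrete=action on finite} guarantees that the best-offender hypotheses stabilize once $n$ is large enough. The entire extension to discrete $p$-toral groups is this approximation argument, plus the corresponding checks for the other finite-group ingredients of Glauberman--Lynd (Lemmas \ref{4.6}--\ref{4.29}, Corollary \ref{6.2}), and a verification that conjugacy functors and well-placed subgroups work over saturated fusion systems on discrete $p$-toral groups (Theorem \ref{well placed subgroup thm}, which rests on the finiteness of $S$-conjugacy classes inside an $\Fc$-conjugacy class, Lemma \ref{P^Fc finite S-conjugacy}).
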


\begin{proof}
This is \cite[Theorem B]{LL}.
When $p$ is odd, this also follows from the proof of \cite[Theorem 3.7]{LL} and Proposition \ref{oddcase} below. When $p=2$, it also follows from the proof of \cite[Theorem 3.7]{LL} and Proposition \ref{6.9} below.
\end{proof}

In the case of $p$-local finite groups, Glauberman and Lynd \cite{GL} gave a proof of Theorem \ref{0} which does not rely on the classification of finite simple groups. The key is to get a better understanding of control of fixed points of finite groups acting on abelian $p$-groups. They reduced the problem to the study of a small set of possible cases which derives from McLaughlin's classification of irreducible subgroups of $SL_n(2)$ generated by transvections \cite{McL69}. 
Here we extend the arguments in \cite{GL} to $p$-local compact groups in order to get a classification-free proof of Theorem \ref{0} and hence also of Theorem \ref{A}. 

In Section 1 we give some background on discrete $p$-toral groups and "offenders". 
Then we look at the case $p\neq 2$ in Section 2. This case is a direct consequence of \cite[Theorem A.1.4]{Gl71}, called the Norm Argument, extended to actions of a finite group on an abelian discrete $p$-toral group. 
The case $p=2$, developed in Section 3, is more delicate. We need first the notion (introduced in \cite{GL}) of "solitary offenders", in order to split into two cases. Either there is a minimal best offender which is not solitary or all minimal best offenders are solitary. We give the definitions in Subsection 3.1 and give some results which tell us when we can use the Norm Argument. In Subsection 3.2 we give the notion of $\Fc$-conjugacy functor which extends the notion of $\Gamma$-functor defined by Glauberman in \cite{Gl71} and which is also recalled in \cite[Appendix 2]{GL}. The principal results in the even case are given in the last two subsections.
 
\subsection*{Acknowledgments} 
I am very grateful to Andy Chermak for suggesting this problem and for fruitful discussions with him. I also want to thank Alex Gonzalez for useful discussions about $p$-local compact groups which have led to some simplifications in the arguments.  
  
\subsection*{Notation}
Conjugation maps and morphisms in a fusion system will be written on the right and in the exponent, while cocycle and cohomology classes for functors will be written on the left. For a group $G$ we denote by $\Sc_p(G)$ the set of all the $p$-subgroups of $G$. For $Y$ a subgroup of $G$, $\Sc_p(G)_{\geq Y}$ denotes the set of all the $p$-subgroups of $G$ which contain $Y$. Finally, $S_m$ will denote the symmetric group on the set $\{1,2,\dots,m\}$, and $SL_n(2)$ denotes the linear group of degree $n$ over $\mathbb{F}_2$, the field with two elements.
 
\section{Background}
\subsection{Virtual discrete \texorpdfstring{$p$}{Lg}-toral groups}
This section is drawn from \cite{LL}.
Let $p$ be a prime, to be fixed for the remainder of this paper.

\begin{defi}
Let $\Z/p^{\infty}$ denote the union of all $\Z/p^r$, $r\geq 1$ with the obvious inclusions.
\begin{enumerate}[(a)]
\item A \emph{discrete $p$-torus} is a group $T$ isomorphic to the direct product $(\Z/p^\infty)^r$ of $r$ copies of $\Z/p^{\infty}$, for some $r$ called the \emph{rank} of $T$.
\item A \emph{virtually discrete $p$-toral group} is a group $\Gamma$ which contains a normal discrete $p$-torus $T$ of finite index. If moreover the index is a power of $p$, we say that $\Gamma$ is a \emph{discrete $p$-toral group}.
\end{enumerate}
\end{defi}

In every virtually discrete $p$-toral group $\Gamma$, there is a unique maximal discrete $p$-torus. 
This subgroup, denoted $\Gamma_0$, is a characteristic subgroup of $\Gamma$ and is called the \emph{maximal torus} or the \emph{identity component} of $\Gamma$. A finite group is in particular a virtually discrete $p$-toral group in which $G_0$ is of rank 0, and it is a discrete $p$-toral group if and only if it is a $p$-group. Hence the theory of virtually discrete $p$-toral group includes the theory of finite groups.

The \emph{order} of a virtually discrete $p$-toral group $\Gamma$ is the pair 
\[\ord(\Gamma)=\left(rk(\Gamma_0),|\Gamma/\Gamma_0|\right)\]
with the left lexicographic order: $(m,n)<(m',n')$ if and only if $m<m'$ or, $m=m'$ and $n<n'$. 

Any subgroup $\Gamma'\leq \Gamma$ of a virtually discrete $p$-toral group is also a virtually discrete $p$-toral group. Moreover, $\ord(\Gamma')\leq \ord(\Gamma)$, with equality  if and only if $\Gamma=\Gamma'$.

A virtually discrete $p$-toral group $\Gamma$ contains a maximal normal discrete $p$-toral subgroup, denoted $O_p(\Gamma)$. It is the preimage in $\Gamma$ of the maximal normal $p$-subgroup $O_p(\Gamma/\Gamma_0)$ and, in particular, contains $\Gamma_0$.

We also have a notion of $p$-Sylow subgroup in a virtually discrete $p$-toral group. If $\Gamma$ is virtually discrete $p$-toral, $\Gamma$ contains a maximal discrete $p$-toral subgroup $S$ given by the preimage in $\Gamma$ of a Sylow $p$-subgroup of $\Gamma/\Gamma_0$.

\begin{defi}
Let $\Gamma$ be a virtually discrete $p$-toral group.
A \emph{Sylow $p$-subgroup} of $\Gamma$ is a maximal discrete $p$-toral subgroup $S$ of $\Gamma$. We denote by $\Syl_p(\Gamma)$ the collection of all the Sylow $p$-subgroups of $\Gamma$.
\end{defi}

The terminology of Sylow subgroup is motivated by the fact that any discrete $p$-toral subgroup of $\Gamma$ is conjugate to a subgroup of $S$. In particular, all the Sylow $p$-subgroups of $\Gamma$ are conjugate.

\begin{lem}[{\cite[Lemma 1.3]{LL}}]\label{FrattiniArg}
\begin{enumerate}[(a)]
\item For any pair $P,Q$ of discrete $p$-toral groups, if $P<Q$ then $P<N_Q(P)$.
\item (Frattini Argument) Suppose that $\Gamma'\unlhd\Gamma$ are virtually $p$-toral groups, and $S\in\Syl_p(\Gamma')$. Then $\Gamma=\Gamma'N_\Gamma(S)$.
\end{enumerate}
\end{lem}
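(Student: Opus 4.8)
The plan is to prove the two parts separately; each is a transcription of a classical finite $p$-group argument, the only genuinely new input being the behaviour of the maximal torus.

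\emph{Part (a).} I would split according to whether the maximal torus $Q_0$ of $Q$ is contained in $P$. If $Q_0\le P$, then $P/Q_0<Q/Q_0$ is a proper inclusion of finite $p$-groups, so by the classical fact that normalizers grow in finite $p$-groups $N_{Q/Q_0}(P/Q_0)>P/Q_0$; since $Q_0\le P$ we have $N_Q(P)/Q_0=N_{Q/Q_0}(P/Q_0)$, whence $N_Q(P)>P$. If $Q_0\not\le P$, then $P<PQ_0\le Q$, so it suffices to produce an element of $N_{PQ_0}(P)$ outside $P$; thus we may assume $Q=PQ_0$. Put $A=P\cap Q_0$. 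Then $A$ is normalized by $Q_0$ (which is abelian and contains $A$) and by $P$ (as $A\unlhd P$), hence by $Q=PQ_0$; and in $\bar Q:=Q/A$ one checks $\bar P\cap\bar Q_0=1$, so $\bar Q=\bar Q_0\rtimes\bar P$, where $\bar Q_0=Q_0/A$ is a nontrivial discrete $p$-torus (since $A<Q_0$) and $\bar P\cong Q/Q_0$ is a finite $p$-group acting on it by conjugation. A finite $p$-group acting on the nonzero $\F_p$-vector space $\bar Q_0[p]$ of $p$-torsion elements has a nonzero fixed point, so $C_{\bar Q_0}(\bar P)\ne 1$; then $C_{\bar Q_0}(\bar P)\cdot\bar P$ is a subgroup of $N_{\bar Q}(\bar P)$ properly containing $\bar P$, and pulling this back to $Q$ along $Q\to\bar Q$ (legitimate since $A\le P$) yields $N_Q(P)>P$.

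\emph{Part (b).} This is the usual Frattini argument. Let $g\in\Gamma$. Since $\Gamma'\unlhd\Gamma$, conjugation by $g$ restricts to an automorphism of $\Gamma'$, so $S^g$ is again a maximal discrete $p$-toral subgroup of $\Gamma'$, i.e.\ $S^g\in\Syl_p(\Gamma')$. By the conjugacy of Sylow $p$-subgroups inside $\Gamma'$ (recalled above), there is $h\in\Gamma'$ with $S^{gh}=S$, i.e.\ $gh\in N_\Gamma(S)$; hence $g=(gh)h^{-1}\in N_\Gamma(S)\,\Gamma'=\Gamma'N_\Gamma(S)$. As $g$ was arbitrary, $\Gamma=\Gamma'N_\Gamma(S)$.

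The routine verifications — that subgroups and quotients of (virtually) discrete $p$-toral groups are again of that type, that $PQ_0$ has $Q_0$ as its maximal torus, and that the correspondence theorem applies when the subgroup quotiented out lies in $P$ — I would dispatch quickly, and the classical statement invoked for finite $p$-groups can itself be obtained by the center-induction (which is harmless, a nontrivial discrete $p$-toral group also having nontrivial center, by applying the same fixed-point fact to the action of $Q/C_Q(Q_0)$ on $Q_0[p]$). The mildly delicate point in (a) is the opening reduction — replacing $Q$ by $PQ_0$ when $Q_0\not\le P$ and then passing to $Q/(P\cap Q_0)$ — after which the structure is forced and the fixed-point fact finishes it. I do not otherwise anticipate a serious obstacle: essentially all the content sits in part (a), which reduces cleanly to the elementary statement that a finite $p$-group acting on a nonzero $\F_p$-vector space has a nonzero fixed point.
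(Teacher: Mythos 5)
Your proof is correct, and the paper itself offers no argument here (it simply cites [LL, Lemma~1.3]), so there is nothing to diverge from. Part~(b) is the standard Frattini argument, transported verbatim once one knows that all Sylow $p$-subgroups of a virtually discrete $p$-toral group are conjugate (which the paper records). Part~(a) is handled cleanly: the case $Q_0\le P$ reduces by the correspondence theorem to the finite $p$-group fact, and in the case $Q_0\not\le P$ the reduction to $Q=PQ_0$ followed by passage to $\bar Q=Q/(P\cap Q_0)=\bar Q_0\rtimes\bar P$ is legitimate --- $P\cap Q_0$ is indeed normal in $PQ_0$, $\bar Q_0$ is a nontrivial discrete $p$-torus because $Q_0/A$ is divisible $p$-torsion of finite rank, and the fixed point of the finite $p$-group $\bar P$ on $\bar Q_0[p]$ gives the required element of $N_{\bar Q}(\bar P)\smallsetminus\bar P$, which pulls back along $Q\to\bar Q$ since $A\le P$. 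The only things you wave at as ``routine'' that genuinely need a one-line check are that $(PQ_0)_0=Q_0$ (any torus of $PQ_0$ is a torus of $Q$, hence lies in $Q_0$, and conversely) and that $Q_0/A$ is again a discrete $p$-torus; both are immediate, so the proof is complete.
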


\begin{defi}
Let $P$ be an abelian discrete $p$-toral group.
For every $n>0$, denote by $\Omega_n(P)$ the subgroup of $P$ consisting of those elements of $P$ whose order divides $p^n$.
\end{defi}

Notice that for every $n\geq 1$, $\Omega_n(P)$ is a characteristic subgroup of $P$.

\begin{lem}[{\cite[Lemma 2.1]{LL}}]\label{action with discreteptoral}
Let $D$ be an abelian discrete $p$-toral group.
Then, the following hold:
\begin{enumerate}[(a)]
\item $D\cong D_0\times E$ where $E$ is a finite abelian $p$-group. In particular, $D=\cup_{n=1}^\infty \Omega_n(D)$;
\item the only action of a discrete $p$-torus $T$ on $D$ is the trivial action;
\item if $D\unlhd \Gamma$ where $\Gamma$ is a virtually discrete $p$-toral group, then $\Gamma_0\leq C_\Gamma(D)$.
\end{enumerate}
\end{lem}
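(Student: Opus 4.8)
The plan is to prove the three assertions in the order given, using (a) as the engine behind (b), and deducing (c) immediately from (b).

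For (a), I would use injectivity of divisible abelian groups. By definition the identity component $D_0$ is a discrete $p$-torus, so $D_0\cong(\Z/p^\infty)^r$ is a divisible abelian group, hence an injective object in the category of abelian groups. Since $D$ is abelian, the inclusion $D_0\hookrightarrow D$ is a monomorphism of abelian groups, so it splits: $D\cong D_0\times E$ with $E\cong D/D_0$, which is finite of $p$-power order because $D$ is discrete $p$-toral, and abelian as a quotient of $D$. For the ``in particular'' clause, note that under this decomposition $\Omega_n(D)=\Omega_n(D_0)\times\Omega_n(E)$; every element of $D_0$ has order a power of $p$, and $E$ is finite, so every element of $D$ lies in $\Omega_n(D)$ for $n$ sufficiently large, whence $D=\cup_{n=1}^\infty\Omega_n(D)$.

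For (b), the idea is to reduce to finite groups via (a). Each $\Omega_n(D)$ is a finite characteristic subgroup of $D$, hence invariant under any action of $T$, so it suffices to show that a discrete $p$-torus $T$ acts trivially on each finite abelian $p$-group $K=\Omega_n(D)$; triviality of the action on $D=\cup_n\Omega_n(D)$ then follows. But an action of $T$ on $K$ is a homomorphism $T\to\Aut(K)$ with finite target, so its image is a finite quotient of $T$; since $T\cong(\Z/p^\infty)^r$ is divisible and a divisible group has no nontrivial finite quotient, the action is trivial.

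For (c), if $D\unlhd\Gamma$ with $\Gamma$ virtually discrete $p$-toral, then conjugation restricts to an action of the discrete $p$-torus $\Gamma_0$ on the abelian discrete $p$-toral group $D$, which is trivial by (b); this is exactly the statement $\Gamma_0\leq C_\Gamma(D)$.

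I do not expect a genuine obstacle here: the argument is essentially formal once one invokes the two standard facts about divisible abelian groups used above, namely injectivity as $\Z$-modules (for the splitting in (a)) and the absence of nontrivial finite quotients (for (b)). The only points requiring a little care are checking that $\Omega_n(D)$ is characteristic and compatible with the product decomposition in (a), both of which are routine.
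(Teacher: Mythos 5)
Your proof is correct and uses the standard approach one would expect for this lemma (which the paper only cites from [LL, Lemma 2.1] rather than re-proving): the splitting in (a) via injectivity of the divisible group $D_0$, the reduction in (b) to finite characteristic subgroups $\Omega_n(D)$ combined with the fact that a divisible group has no nontrivial finite quotient, and (c) as an immediate specialization of (b).
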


The following notion was introduced in \cite{Ch}. It plays a central role here (as it did also in \cite{O5} and \cite{LL}).

\begin{defi}
A \emph{general setup} is a triple $(\Gamma,S,Y)$ where
\begin{enumerate}
\item $\Gamma$ is a virtually discrete $p$-toral group, $S\in\Syl_p(\Gamma)$ and $Y\leq S$;
\item $\Gamma_0\leq Y\trianglelefteq\Gamma$ and $C_\Gamma(Y)\leq Y$.
\end{enumerate}
A \emph{reduced setup} is a general setup $(\Gamma,S,Y)$ which satisfies
\begin{enumerate}[(3)]
\item $Y=O_p(\Gamma)$, $C_S(Z(Y))=Y$, and $O_p(\Gamma/C_\Gamma(Z(Y)))=1$.
\end{enumerate}
\end{defi}

\subsection{Fusion systems over discrete \texorpdfstring{$p$}{Lg}-toral groups}

\begin{defi}\label{fusion system}
A \emph{fusion system} over a discrete $p$-toral group $S$ is a category whose objects are the subgroups of $S$ and whose morphisms are group monomorphisms such that
\begin{enumerate}[(i)]
\item $\Hom_S(P,Q)\subseteq \Hom_\Fc(P,Q)$ for all $P,Q\leq S$, where $\Hom_S(P,Q)$ denotes the set of homomorphisms induced by conjugations in $S$;
\item every morphism is the composite of an $\Fc$-isomorphism and an inclusion.
\end{enumerate} 

Two subgroups $P,Q\leq S$ are said to be \emph{$\Fc$-conjugate} if they are isomorphic as objects in $\Fc$, and we denote by $P^\Fc$ the $\Fc$-isomorphism class of $P$, commonly called the \emph{$\Fc$-conjugacy class} of $P$.
\end{defi}

\begin{defi}\label{fn,fc,receptive}
let $\Fc$ be a fusion system over a discrete $p$-toral group $S$ and let $P$ be a subgroup of $S$.
\begin{enumerate}[(a)]
\item $P$ is \emph{fully normalized} in $\Fc$ if, for every $Q\in P^\Fc$, $|N_S(Q)|\leq |N_S(P)|$.
\item $P$ is \emph{fully centralized} in $\Fc$ if, for every $Q\in P^\Fc$, $|C_S(Q)|\leq |C_S(P)|$.
\item $P$ is \emph{fully automized} in $\Fc$ if the group $\Out_\Fc(P):=\Aut_\Fc(P)/\Inn(P)$ is finite and, $\Out_S(P):=\Aut_S(P)/\Inn(P)$ is a Sylow $p$-subgroup of $\Out_\Fc(P)$.
\item $P$ is \emph{receptive} if, for every $Q\in P^\Fc$ and $\varphi\in\Hom_\Fc(Q,P)$, if we set
\[N_\varphi=\left\lbrace g\in N_S(Q)\mid \varphi^{-1} c_g \varphi\in\Aut_S(P)\right\rbrace,\]
then there is $\overline{\varphi}\in\Hom_\Fc(N_\varphi,N_S(P))$ such that $\overline{\varphi}|_Q=\varphi$. 
\end{enumerate}
Denote by $\Fc^f$ the set of all fully normalized subgroups of $S$.
\end{defi}

\begin{defi}\label{saturation}
A fusion system $\Fc$ over a discrete $p$-toral group $S$ is \emph{saturated} if
\begin{enumerate}[(I)]
\item Every $P\leq S$ which is fully normalized in $\Fc$ is fully centralized and fully automized ;\label{I}
\item Every $P\leq S$ which is fully centralized in $\Fc$ is receptive in $\Fc$; and\label{II}
\item If $P_1\leq P_2\leq \cdots \leq S$ is an increasing sequence of subgroups of $S$ and if $\varphi\in\Hom(\cup_{i\geq 1}P_i,S)$ is a homomorphism such that $\varphi|_{P_i}\in\Hom_\Fc(P_i,S)$ for all $i$, then $\varphi\in \Hom_\Fc(\cup_{i\geq 1}P_i,S)$.\label{III}
\end{enumerate}
\end{defi}

Notice that, if $\Fc$ is a saturated fusion system over a discrete $p$-toral group $S$, for every $P\leq S$, by axioms \eqref{I} and \eqref{II}, $\Out_\Fc(P)$ is also finite. 

The next Lemma is a classical result for fusion systems over finite $p$-groups which can easily be extended to fusion systems over discrete $p$-toral groups. It is a well known result, but since the author could not find a reference for it, a proof is provided here. 

\begin{lem}\label{saturation lem}
Let $\Fc$ be a saturated fusion system over a discrete $p$-toral subgroup $S$.
Let $P\leq S$. 
Assume that $P$ is fully automized and receptive. Then, for every $Q\in P^\Fc$, there exists $\varphi\in\Hom_\Fc(N_S(Q),N_S(P))$ such that $Q^\varphi=P$.
\end{lem}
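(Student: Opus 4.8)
The plan is to mimic the classical argument for fusion systems over finite $p$-groups, using receptivity to "push" conjugations forward along a carefully chosen $\Fc$-isomorphism. First I would pick a subgroup $P'\in P^\Fc$ that is fully normalized in $\Fc$; such a $P'$ exists because, in a saturated fusion system over a discrete $p$-toral group, each $\Fc$-conjugacy class contains a fully normalized representative (this follows from the saturation axioms, via axiom \eqref{III} to handle the passage to the direct limit when bounding $|N_S(-)|$ over the class). Since $P'$ is fully normalized it is fully automized by axiom \eqref{I}, so $\Out_S(P')\in\Syl_p(\Out_\Fc(P'))$. By hypothesis $P$ is itself fully automized, so $\Out_S(P)$ is a finite $p$-subgroup of $\Out_\Fc(P)\cong\Out_\Fc(P')$; transporting it across any fixed $\Fc$-isomorphism $\alpha\colon P\xrightarrow{\ \sim\ }P'$ gives a finite $p$-subgroup of $\Out_\Fc(P')$, which must be contained in some Sylow $p$-subgroup, and all Sylow $p$-subgroups of $\Out_\Fc(P')$ are conjugate (it is a finite group). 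Hence, after composing $\alpha$ with a suitable element of $\Aut_\Fc(P')$, I may arrange that $\alpha$ conjugates $\Aut_S(P)$ into $\Aut_S(P')$, i.e. $\alpha\Aut_S(P)\alpha^{-1}\leq\Aut_S(P')$.

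Next, given an arbitrary $Q\in P^\Fc$, fix any $\Fc$-isomorphism $\psi\colon Q\xrightarrow{\ \sim\ }P$ and set $\varphi=\alpha\circ\psi\colon Q\xrightarrow{\ \sim\ }P'$. The choice of $\alpha$ in the previous paragraph guarantees that $N_\varphi\supseteq N_S(Q)$: indeed, for $g\in N_S(Q)$ we have $c_g\in\Aut_S(Q)$, so $\varphi^{-1}c_g\varphi\in\varphi^{-1}\Aut_S(Q)\varphi$, and since $\psi$ is an $\Fc$-iso it carries $\Aut_S(Q)$ into... — here one has to be slightly careful, because $\psi$ need not send $\Aut_S(Q)$ into $\Aut_S(P)$. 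This is exactly the point where I expect the main technical work to lie, and it is resolved by instead applying receptivity of $P'$ twice, or more cleanly by first replacing $P$ by $P'$ at the outset: I would run the whole argument with $P'$ fully normalized (hence fully centralized, hence receptive by axiom \eqref{II}, and fully automized by \eqref{I}), obtain the desired map $N_S(Q)\to N_S(P')$ for every $Q$, and then apply this to $Q=P$ to get an $\Fc$-isomorphism $N_S(P)\to N_S(P')$ restricting to an $\Fc$-iso $P\to P'$; since $P$ is fully automized, a dimension/order comparison together with the fact that $P'$ is fully normalized forces this map to be onto $N_S(P')$, giving $|N_S(P)|=|N_S(P')|$, so that in fact $P$ is fully normalized too, and one can invert to land inside $N_S(P)$.

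So the clean structure is: (1) reduce to a fully normalized representative $P'$; (2) use full automorphism of $P$ together with conjugacy of Sylow $p$-subgroups in the finite group $\Out_\Fc(P')$ to choose $\varphi\colon Q\to P'$ with $\varphi^{-1}\Aut_S(Q)\varphi\cap\Aut_S(P')\supseteq$ the image of $\Aut_S(Q)$—more precisely, arrange $\varphi|_{N_S(Q)}$-compatibility so that $N_S(Q)\leq N_\varphi$; (3) apply receptivity of $P'$ to extend $\varphi$ to $\overline\varphi\colon N_\varphi\to N_S(P')$, whose restriction to $N_S(Q)$ is the required map; (4) specialize to $Q=P$ and use $P$ fully automized to conclude $|N_S(P)|=|N_S(P')|$ and hence that the constructed map, composed with an inverse $\Fc$-iso $N_S(P')\to N_S(P)$ obtained symmetrically, gives $\varphi\in\Hom_\Fc(N_S(Q),N_S(P))$ with $\varphi(Q)=P$. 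The main obstacle, as indicated, is step (2): ensuring that the Sylow-conjugacy adjustment in $\Out_\Fc(P')$ genuinely yields $N_S(Q)\subseteq N_\varphi$ rather than just $N_S(P)\subseteq N_{\varphi^{-1}}$-type statements; this is handled by the standard bookkeeping with the map $N_\varphi$ and the observation that $\Aut_S(Q)$ maps into a $p$-subgroup of $\Aut_\Fc(P')$ that, after the adjustment, lies in $\Aut_S(P')$.
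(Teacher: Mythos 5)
Your detour through a fully normalized representative $P'$ is unnecessary and, more importantly, the place where you apply the Sylow adjustment is the wrong one. The hypotheses on $P$ (fully automized and receptive) are exactly what allow the argument to be run directly with $P$ as the target: choose any $\psi\in\Iso_\Fc(Q,P)$; then $\Out_S(Q)^\psi$ is a $p$-subgroup of the \emph{finite} group $\Out_\Fc(P)$; since $P$ is fully automized, $\Out_S(P)$ is a Sylow $p$-subgroup of $\Out_\Fc(P)$, so one may pick $\alpha\in\Aut_\Fc(P)$ with $\Aut_S(Q)^{\psi\alpha}\leq\Aut_S(P)$, which gives $N_{\psi\alpha}=N_S(Q)$; receptivity of $P$ then extends $\psi\alpha$ to $\varphi\in\Hom_\Fc(N_S(Q),N_S(P))$ with $Q^\varphi=P$. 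The point you miss is that it is $\Aut_S(Q)$ that needs to be pushed into $\Aut_S(P)$, so the Sylow adjustment must be performed on $\psi\colon Q\to P$ using that $P$ is fully automized; instead you adjust $\alpha\colon P\to P'$, which controls $\Aut_S(P)$ rather than $\Aut_S(Q)$, and this is exactly why your first attempt stalls.

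Your backtrack (run everything with $P'$ and then transport back to $P$) can in principle be pushed through, but as written it contains a genuine gap at the final step. You claim that full automization of $P$, together with $P'$ being fully normalized, forces the injection $N_S(P)\hookrightarrow N_S(P')$ to be onto. Full automization only yields $|\Aut_S(P)|=|\Aut_S(P')|$; to conclude $\ord(N_S(P))=\ord(N_S(P'))$ you also need $|C_S(P)|=|C_S(P')|$, i.e.\ that $P$ is fully centralized. This in turn does follow from receptivity of $P$ (for any $\varphi\in\Iso_\Fc(Q,P)$ one has $C_S(Q)\leq N_\varphi$, so the receptivity extension injects $C_S(Q)$ into $C_S(P)$), but you never invoke receptivity at this point, attributing the order comparison solely to full automization. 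Finally, a smaller imprecision: the existence of a fully normalized representative in each $\Fc$-conjugacy class is not a consequence of axiom (III); it follows from Lemma \ref{P^Fc finite S-conjugacy}, which says $P^\Fc$ has only finitely many $S$-conjugacy classes, so the set of possible orders $\ord(N_S(Q))$ is finite and has a maximum.
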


\begin{proof}
Assume $P$ is fully automized and receptive and fix $Q\in P^\Fc$. Choose $\psi\in\Iso_\Fc(P,Q)$. Then $\Out_S(Q)^\psi$ is a $p$-subgroup of $\Out_\Fc(P)$ and hence, it is $\Out_\Fc(P)$-conjugate to a subgroup of $\Out_S(P)$ since $P$ is fully automized. Fix $\alpha\in\Aut_\Fc(P)$ such that $\Aut_S(Q)^{\psi\alpha}$ is contained in $\Aut_S(P)$. Then, $N_{\psi\alpha}=N_S(Q)$ and so $\psi\alpha$ extends to some homomorphism $\varphi\in\Hom_\Fc(N_S(Q),N_S(P))$ with, $Q^\varphi=Q^{\psi\alpha}=P$ and $\im(\varphi)\leq N_S(P)$.
\end{proof}

One basic family of saturated fusion systems is given by fusion systems of virtually discrete $p$-toral groups.
For $\Gamma$ a virtually discrete $p$-toral group, let $\Fc_S(\Gamma)$ be the fusion system over $S$ where, for every $P,Q\leq S$, $\Hom_{\Fc_S(\Gamma)}(P,Q)=\Hom_\Gamma(P,Q)$ is the set of monomorphisms induced by conjugation in $\Gamma$.

\begin{prop}[{\cite[Proposition 1.5]{LL}}]
Let $\Gamma$ be a virtually discrete $p$-toral group and $S\in\Syl_p(\Gamma)$. Then $\Fc_S(\Gamma)$ is saturated.
\end{prop}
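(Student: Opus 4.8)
The plan is to check the three saturation axioms \eqref{I}, \eqref{II} and \eqref{III} directly; axioms \eqref{I} and \eqref{II} will be proved exactly as in the classical fact that $\Fc_S(G)$ is saturated for a finite group $G$, with finite Sylow theory replaced by the Sylow theory of virtually discrete $p$-toral groups (Lemma~\ref{FrattiniArg}) and with Lemma~\ref{action with discreteptoral} used to control identity components, while axiom \eqref{III}, which has no finite analogue, will be handled by a compactness argument built on the order function. Recall throughout that two subgroups of $S$ are $\Fc_S(\Gamma)$-conjugate precisely when they are $\Gamma$-conjugate. For \eqref{III}, let $P_1\leq P_2\leq\cdots$ have union $P_\infty$ and let $\varphi\in\Hom(P_\infty,S)$ satisfy $\varphi|_{P_i}=c_{g_i}|_{P_i}$ for suitable $g_i\in\Gamma$. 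The sets $A_i=\{g\in\Gamma\mid c_g|_{P_i}=\varphi|_{P_i}\}$ are nonempty cosets of $C_\Gamma(P_i)$, and $C_\Gamma(P_1)\geq C_\Gamma(P_2)\geq\cdots$ is a descending chain of virtually discrete $p$-toral groups whose orders are eventually constant, the lexicographic order being well-founded; since equal orders on nested subgroups force equality, $C_\Gamma(P_i)=C_\Gamma(P_n)=:C$ for all $i\geq n$, and hence $C_\Gamma(P_\infty)=\bigcap_i C_\Gamma(P_i)=C$. Pick $g\in A_n$. For each $i\geq n$ the elements $g$ and $g_i$ induce the same conjugation on $P_n$ (both restrict to $\varphi|_{P_n}$), hence the same conjugation on $P_i$ because $C_\Gamma(P_n)=C_\Gamma(P_i)$; so $c_g|_{P_i}=c_{g_i}|_{P_i}=\varphi|_{P_i}$. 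Together with the inclusions $P_j\leq P_n$ for $j\leq n$, this gives $c_g|_{P_i}=\varphi|_{P_i}$ for every $i$, so $\varphi=c_g|_{P_\infty}\in\Hom_{\Fc_S(\Gamma)}(P_\infty,S)$.

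The engine for axioms \eqref{I} and \eqref{II} is the characterization that $P\leq S$ is fully normalized in $\Fc_S(\Gamma)$ if and only if $N_S(P)\in\Syl_p(N_\Gamma(P))$, and fully centralized if and only if $C_S(P)\in\Syl_p(C_\Gamma(P))$. One implication is immediate because $N_\Gamma(Q)\cong N_\Gamma(P)$ and $C_\Gamma(Q)\cong C_\Gamma(P)$ for every $Q\in P^\Fc$; for the other, $N_S(P)=S\cap N_\Gamma(P)$ lies in some $S'\in\Syl_p(N_\Gamma(P))$, and conjugating $S'$ back inside $S$ yields $Q\in P^\Fc$ with $\ord(N_S(Q))\geq\ord(S')$, forcing equality when $P$ is fully normalized (similarly for centralizers). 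Granting this, take $P$ fully normalized, so $N_S(P)\in\Syl_p(N_\Gamma(P))$. Then $C_S(P)=N_S(P)\cap C_\Gamma(P)\in\Syl_p(C_\Gamma(P))$, because $C_\Gamma(P)\unlhd N_\Gamma(P)$ and in a virtually discrete $p$-toral group a Sylow subgroup meets any normal subgroup in a Sylow subgroup of it --- indeed, by Lemma~\ref{FrattiniArg}, a discrete $p$-toral subgroup of index prime to $p$ is automatically a Sylow subgroup; hence $P$ is fully centralized. Next, the image of $N_S(P)$ under $N_\Gamma(P)\rightarrow N_\Gamma(P)/C_\Gamma(P)=\Aut_\Gamma(P)$ equals $\Aut_S(P)$ and is a Sylow subgroup of $\Aut_\Gamma(P)$, since quotients of virtually discrete $p$-toral groups carry Sylow subgroups to Sylow subgroups; moreover $\Aut_S(P)$ is finite, for its identity component is a discrete $p$-torus inside $\Gamma_0$ normalizing $P$, so acts trivially on the finite group $P/P_0$ and, by Lemma~\ref{action with discreteptoral}, on the abelian discrete $p$-torus $P_0$, hence trivially on $P$. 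Therefore $\Out_\Gamma(P)$ is finite and $\Out_S(P)\in\Syl_p(\Out_\Gamma(P))$, and $P$ is fully automized. This proves \eqref{I}.

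For axiom \eqref{II}, let $P$ be fully centralized, so $C_S(P)\in\Syl_p(C_\Gamma(P))$, and let $Q\in P^\Fc$ and $\varphi\in\Hom_\Fc(Q,P)$. Then $\varphi$ is an isomorphism, since $Q$ and $P$ are isomorphic discrete $p$-toral groups and an injective homomorphism between such is bijective on each $\Omega_n$. Write $\varphi=c_g|_Q$. The standard computation shows that conjugation by $g$ carries $N_\varphi$ into the subgroup $N_S(P)C_\Gamma(P)$ of $N_\Gamma(P)$, and that $N_S(P)$ is a Sylow subgroup of $N_S(P)C_\Gamma(P)$ (its index there equals $[C_\Gamma(P):C_S(P)]$, which is prime to $p$); so, after adjusting $g$ by an element of $C_\Gamma(P)$, conjugation by the new element maps $N_\varphi$ into $N_S(P)$ and restricts to the required extension $\overline{\varphi}\in\Hom_\Fc(N_\varphi,N_S(P))$ of $\varphi$. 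Hence $P$ is receptive, which is axiom \eqref{II}.

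The delicate points, and where this departs from the finite-group argument, are twofold. First, axiom \eqref{III} rests essentially on the order function being a well-founded invariant, which is what forces the chain $C_\Gamma(P_i)$ to stabilize so that agreement on each $P_i$ propagates to $P_\infty$. Second, the proofs of \eqref{I} and \eqref{II} require re-establishing for virtually discrete $p$-toral groups the Sylow-theoretic facts used silently in the finite case --- the Frattini argument, ``a Sylow subgroup meets a normal subgroup in a Sylow subgroup of it'', and ``images of Sylow subgroups under quotients are Sylow subgroups'' --- and, most importantly, knowing that $\Aut_\Gamma(P)$ and $\Out_\Gamma(P)$ are finite, so that the saturation axioms are even well-posed; it is here that Lemma~\ref{action with discreteptoral} on the triviality of torus actions is indispensable, since a priori the identity component could contribute an infinite automorphism group.
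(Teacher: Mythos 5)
The overall architecture is sound, and the treatments of axioms \eqref{II} and \eqref{III} are essentially correct: the Sylow-theoretic characterizations of fully normalized and fully centralized, the $\ord$-argument forcing morphisms in $\Fc_S(\Gamma)$ between $\Fc$-conjugate subgroups to be isomorphisms, the coset-chain stabilization argument for axiom \eqref{III}, and the Frattini-type adjustment of $g$ by an element of $C_\Gamma(P)$ in axiom \eqref{II} all work.

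However, there is a genuine gap in the full automization step of axiom \eqref{I}. You assert that $\Aut_S(P)$ is finite, with the justification that the identity component $T$ of $\Aut_S(P)$ (coming from the torus $N_S(P)_0$) acts trivially on $P_0$ and on $P/P_0$, ``hence trivially on $P$.'' Both the inference and the conclusion are false. Acting trivially on a normal subgroup and the corresponding quotient does not force the trivial action on the whole group: for $p=2$, take $S=\Z/2^\infty\rtimes\Z/2$ with $\Z/2$ acting by inversion and $P=S$; then $T=N_S(P)_0=S_0$ centralizes $P_0=S_0$ and acts trivially on $P/P_0\cong\Z/2$, yet $T\not\leq C_S(P)=Z(S)\cong\Z/2$, and indeed $\Aut_S(P)=S/Z(S)$ is \emph{infinite}. (Note also that Definition~\ref{fn,fc,receptive}(c) requires finiteness of $\Out_\Fc(P)$, not of $\Aut_S(P)$, which in general is not finite, $\Inn(P)$ being infinite.) What you actually need is that $\Out_S(P)$ is finite, equivalently that $N_S(P)_0\leq PC_S(P)$, i.e.\ that $T$ maps trivially into $\Out(P)$; once this is known, since the maximal torus of $\Out_\Gamma(P)$ lies in the Sylow subgroup $\Out_S(P)$, finiteness of $\Out_\Gamma(P)$ follows. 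The correct argument is slightly more delicate: since $T$ centralizes $P_0$ and acts trivially on $P/P_0$, the map $T\to\Aut(P)$ lands in $K:=\ker(\Aut(P)\to\Aut(P_0)\times\Aut(P/P_0))\cong Z^1(P/P_0,P_0)$, and $K\cap\Inn(P)$ contains $B^1(P/P_0,P_0)$; hence the image of $T$ in $\Out(P)$ is a divisible subgroup of a quotient of $H^1(P/P_0,P_0)$, which is annihilated by $|P/P_0|$, and a divisible subgroup of a bounded-exponent group is trivial. Without some argument of this kind the proof of axiom \eqref{I} is incomplete.
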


\begin{defi}
Let $\Fc$ be a fusion system over a discrete $p$-toral group $S$.
A subgroup $P\leq S$ is \emph{$\Fc$-centric} if for every $Q\in P^\Fc$, $C_S(Q)=Z(Q)$.
Denote by $\Fc^c$ the collection of $\Fc$-centric subgroup of $S$. We also write $\Fc^c$ for the full subcategory of $\Fc$ with objects the $\Fc$-centric subgroups of $S$.
\end{defi}

The following lemma will be useful when working with well-placed subgroups with respect to a conjugacy functor (Subsection 3.2). The author is grateful to Alex Gonzalez for pointing this out.

\begin{lem}\label{P^Fc finite S-conjugacy}
Let $\Fc$ be a saturated fusion system over a discrete $p$-toral group $S$ and let $P$ be a subgroup of $S$.
Then $P^\Fc$ contains finitely many $S$-conjugacy classes.
\end{lem}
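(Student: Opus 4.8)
The plan is to show that if $P^{\Fc}$ contained infinitely many $S$-conjugacy classes, then we could build an infinite strictly increasing chain of subgroups contradicting the fact that $S$ has well-defined finite order. Concretely, I would argue that among the $S$-conjugacy classes in $P^{\Fc}$ there is one represented by a fully normalized subgroup $R$ (using that orders $\ord(N_S(Q))$ are bounded by $\ord(S)$ and the lexicographic order is a well-order on pairs). By Lemma~\ref{saturation lem} (applied via saturation axiom \eqref{I}, which makes $R$ fully automized and receptive) every $Q\in P^{\Fc}$ admits $\varphi_Q\in\Hom_\Fc(N_S(Q),N_S(R))$ with $Q^{\varphi_Q}=R$. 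So it suffices to bound the number of $S$-conjugacy classes of subgroups of $N_S(R)$ that are $S$-conjugate into $P^{\Fc}$... but that is not obviously finite either, so I would instead count preimages.

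First I would fix a fully normalized $R\in P^{\Fc}$ and, for each $S$-conjugacy class $[Q]$ in $P^{\Fc}$, choose a representative $Q$ together with an isomorphism $\varphi_Q\in\Iso_\Fc(Q,R)$ extending to $N_S(Q)\to N_S(R)$. Then $\varphi_Q$ conjugates $\Aut_S(Q)$ into $\Aut_S(R)\cdot\Aut_\Fc(R)$; composing with an element of $\Aut_\Fc(R)$ I may arrange $\Aut_S(Q)^{\varphi_Q}\le\Aut_S(R)$, hence $\Aut_S(Q)^{\varphi_Q}$ is one of the subgroups of the finite group $\Out_S(R)$ (finite by the remark after Definition~\ref{saturation}). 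The key point: distinct $S$-conjugacy classes $[Q],[Q']$ in $P^{\Fc}$ give, after this normalization, the datum of a subgroup of $\Aut_S(R)$ together with a coset in $\Aut_\Fc(R)$, and I would show that the map $[Q]\mapsto \Aut_S(Q)^{\varphi_Q}\le\Aut_S(R)$ followed by recording the $N_{\Aut_\Fc(R)}(\,\cdot\,)$-orbit of $\varphi_Q$ is injective; since $\Out_\Fc(R)$ is finite and has only finitely many subgroups, this gives finiteness.

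Alternatively — and this is probably cleaner — I would use part (a) of Lemma~\ref{FrattiniArg}: if $P$ is fully normalized one can show directly that every $Q\in P^\Fc$ is $S$-conjugate to a subgroup $Q'$ with $N_S(Q')$ of maximal order, and that there are only finitely many $S$-conjugacy classes of such $Q'$ because each is determined by the $\Aut_S(N_S(R))$-orbit of the image $\overline{\varphi_Q}(N_S(Q))$ inside the finite quotient $N_S(R)/\,$(something), exploiting that $N_S(R)/R_0$ is a finite $p$-group where $R_0=R\cap S_0$. The essential finiteness input is always the same: $\Out_\Fc(R)$ is finite (axioms \eqref{I},\eqref{II}), and a finite group has finitely many subgroups and finitely many conjugacy classes of elements.

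The main obstacle will be organizing the bookkeeping so that the count is genuinely finite rather than merely ``locally finite'': a priori a subgroup $R$ can be $S$-conjugate into $P^{\Fc}$ via infinitely many $\Fc$-isomorphisms, and one must see that the ambiguity is absorbed by the finite group $\Out_\Fc(R)$ — i.e., that two subgroups $Q,Q'\le S$ which are both fully-normalized-conjugated onto $R$ by maps agreeing modulo $\Aut_\Fc(R)$ are already $S$-conjugate. This is where receptivity of $R$ (from saturation axiom \eqref{II}) does the real work, since it lets one extend an isomorphism $Q\to Q'$ that differs from $\varphi_Q\varphi_{Q'}^{-1}$ by an element of $\Aut_S(R)$ to a conjugation in $S$. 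Once that is in hand the finiteness follows formally from finiteness of $\Out_\Fc(R)$.
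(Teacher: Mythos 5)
Your route is genuinely different from the paper's: the paper's proof is a one-line citation to \cite[Lemma 2.5]{BLO3} (taking $Q=S$ there shows that $\Rep_\Fc(P,S)=\Hom_\Fc(P,S)/\Inn(S)$ is finite, and $[\varphi]\mapsto P^\varphi$ surjects this set onto the set of $S$-conjugacy classes in $P^\Fc$). You instead try to rederive the finiteness from the saturation axioms alone, using a fully normalized $R\in P^\Fc$ together with the finiteness of $\Out_\Fc(R)$.

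There is a genuine gap, and it is not a matter of bookkeeping. Your first sketch needs an injection from the set of $S$-conjugacy classes in $P^\Fc$ into a finite set. The datum $\Aut_S(Q)^{\varphi_Q}\leq\Aut_S(R)$ cannot do that work: it fails to separate $S$-classes (any two fully normalized $Q,Q'\in P^\Fc$, after your normalization, both map to $\Aut_S(R)$, yet need not be $S$-conjugate --- already for $\Fc=\Fc_S(A_5)$ at $p=2$ with $P$ of order two), and its ambient set is not even finite ($\Aut_S(R)\cong N_S(R)/C_S(R)$ can be an infinite discrete $p$-toral group; only $\Out_S(R)$ is finite). The burden thus falls entirely on the ``coset/orbit of $\varphi_Q$'' datum, but $\varphi_Q\in\Iso_\Fc(Q,R)$ and $\varphi_{Q'}\in\Iso_\Fc(Q',R)$ are elements of different $\Aut_\Fc(R)$-torsors, so there is no canonical finite set receiving both without already knowing when $Q$ and $Q'$ are $S$-conjugate. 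Your stated key claim --- that subgroups carried onto $R$ by maps ``agreeing modulo $\Aut_\Fc(R)$'' are already $S$-conjugate --- is vacuously strong (any two isomorphisms onto $R$ agree modulo $\Aut_\Fc(R)$), hence would force $P^\Fc$ to be a single $S$-class, which is false. The appeal to receptivity also misreads the axiom: receptivity enlarges the domain of an $\Fc$-morphism, it does not convert an $\Fc$-isomorphism $Q\to Q'$ into conjugation by an element of $S$. Your alternative sketch contains an outright error as well: $Q\in P^\Fc$ is in general only $\Fc$-conjugate, not $S$-conjugate, to a subgroup with $|N_S(\cdot)|$ maximal, since $|N_S(Q^s)|=|N_S(Q)|$ for every $s\in S$. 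The finiteness you want does not follow formally from saturation together with $|\Out_\Fc(R)|<\infty$; it is the actual content of \cite[Lemma 2.5]{BLO3}, which the paper is content to cite.
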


\begin{proof}
Lemma \ref{P^Fc finite S-conjugacy} is a direct corollary of \cite[Lemma 2.5]{BLO3} with $Q=S$ in that Lemma.
\end{proof}

\begin{defi}
Let $\Fc$ be a fusion system over a discrete $p$-toral group $S$.
A subgroup $P\leq S$ is \emph{$\Fc$-weakly closed} if $P^\Fc=\{P\}$.
\end{defi}

\subsection{Higher limits of the center functor}

\begin{defi}
Let $\Fc$ be a saturated fusion system over a discrete $p$-toral group $S$.
The \emph{orbit category}, denoted $\Or(\Fc)$, is the category where the objects are the subgroups of $S$, and with
 \[\Mor_{\Or(\Fc)}(P,Q)=\Hom_\Fc(P,Q)/\Inn(Q).\]
Denote by $\Or(\Fc^c)$ the full subcategory of $\Or(\Fc)$ whose objects are the $\Fc$-centric subgroups of $\Fc$.
\end{defi}

\begin{defi}
Let $\Fc$ be a fusion system over a discrete $p$-toral group $S$.
\begin{enumerate}[(a)]
\item A collection $\Rc\subseteq \Sc_p(S)$ is an \emph{interval} if $P\in \Rc$ whenever $P_1,P_2\in\Rc$ and $P_1\leq P\leq P_2$.
\item A collection $\Rc\subseteq \Sc_p(S)$ is called \emph{$\Fc$-invariant} if, for every $P\in\Rc$ and $\varphi\in\Hom_\Fc(P,S)$, $P^\varphi\in\Rc$. 
\end{enumerate}
\end{defi}

\begin{defi}
Let $\Fc$ be a saturated fusion system over a discrete $p$-toral group $S$.
Let $\Zc:\Or(\Fc^c)\rightarrow \Ab$ denote the functor sending $P$ to its center $Z(P)$.
\begin{enumerate}[(a)]
\item for an $\Fc$-invariant interval $\Rc$ in $\Fc^c$, we denote by $\Zc^\Rc_{\Fc^c}$ the functor $\Zc^\Rc_{\Fc^c}:\Or(\Fc^c)\rightarrow \Ab$ such that
\[
\Zc^\Rc_{\Fc^c}(P)=\left\lbrace
\begin{array}{ll}
\Zc(P)=Z(P) &\text{if }P\in\Rc\\
1 & \text{otherwise.}
\end{array}\right.
\]
\item If $\Rc$ is an $\Fc$-invariant interval in $\Fc^c$, set
\[L^*(\Fc;\Rc)={\limproj{\Or(\Fc^c)}}^*\;\Zc_{\Fc^c}^\Rc \]

\end{enumerate}
\end{defi}

These higher derived limits are cohomology groups of a cochain complex $C^*(\Or(\Fc^c),\Zc_{\Fc^c}^\Rc)$, in which $k$-cochains are maps on sequences of $k$-composable morphisms in the category. A $0$-cochain is a map sending $P\in\Fc^c$ to an element $u(P)\in\Zc_{\Fc^c}^\Rc(P)$, and a $1$-cochain is a map sending a morphism $\xymatrix{P\ar[r]^{[\varphi]}& Q}$ to an element $t([\varphi])\in\Zc_{\Fc^c}^\Rc(P)$. In our case, we will consider an $\Fc$-invariant collection of subgroups $\Qc$ which is closed under passing to overgroups. In this particular setting, if we write $\varphi^{-1}$ for the inverse of $\varphi:P\rightarrow P^\varphi$, the coboundary map on such $0$- and $1$-cochains are as follows:

\begin{equation}\label{0-chainrule}
du([\varphi])= 
u(Q)^{\varphi^{-1}}u(P)^{-1}\in \Zc_{\Fc^c}^\Rc(P)
\end{equation}
and
\begin{equation}\label{1-chainrule}
dt([\varphi][\psi])=t([\psi])^{\varphi^{-1}}t([\varphi\psi])^{-1}t([\varphi])\in \Zc_{\Fc^c}^\Rc(P)
\end{equation}

for any sequence $\xymatrix{P\ar[r]^\varphi & Q \ar[r]^\psi & R}$ of composable morphisms in $\Fc^c$. Notice that, if $P\notin \Qc$, then $du([\varphi])=1$ and $dt([\varphi][\psi])=1$.
We refer to \cite[Section III.5.1]{AKO} for more details.

\begin{lem}[{\cite[Lemma 1.15]{LL}}]\label{higherlimitswithS}
Let $\Fc$ be a saturated fusion system over a discrete $p$-toral group $S$. Assume that $\Qc$ is an $\Fc$-invariant interval in $\Fc^c$ such that $S\in\Qc$, and denote by $\Or(\Fc^\Qc)$ the full subcategory of $\Or(\Fc^c)$ with the set $\Qc$ as set of objects.
\begin{enumerate}[(a)]
\item Let $F:\Or(\Fc^c)^\text{op}\rightarrow\Ab$ be a functor such that $F(P)=0$ if $P\in\Fc^c\smallsetminus \Qc$. Let $F|_\Qc$ denote its restriction to $\Or(\Fc^\Qc)^\text{op}$. Then, 
\[{\limproj{\Or(\Fc^c)}}^*\;F\cong{\limproj{\Or(\Fc^\Qc)}}^*\;F|_\Qc\]
\item Suppose that $(\Gamma,S,Y)$ is a general setup and $\Fc=\Fc_S(\Gamma)$ and $\Qc=\Fc_{\geq Y}$. Then
\[L^k(\Fc;\Qc)\cong
\left\lbrace\begin{array}{cc}
Z(\Gamma) & \text{if k=0} \\
        0 & \text{if k>0}
\end{array}         
         \right.\]
\end{enumerate}
\end{lem}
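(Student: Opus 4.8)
The plan is to handle the two parts separately, deriving (b) from a computation over the orbit category of a finite group. For part (a) I would argue directly with the cochain complexes. Since $\Qc$ is an $\Fc$-invariant interval in $\Fc^c$ with $S\in\Qc$, it is closed under passing to overgroups in $S$ (if $P\in\Qc$ and $P\leq R\leq S$ then $P,S\in\Qc$, so $R\in\Qc$ by the interval condition), and, because every $\Fc$-morphism is an isomorphism followed by an inclusion, every morphism of $\Or(\Fc^c)$ with source in $\Qc$ has target in $\Qc$ ($\Fc$-invariance handles the isomorphism part, overgroup-closure the inclusion). Hence in $C^*(\Or(\Fc^c),F)$ any chain $P_0\to\cdots\to P_k$ that can carry a nonzero value satisfies $F(P_0)\neq 0$, so $P_0\in\Qc$, so the whole chain lies in $\Qc$; thus $C^*(\Or(\Fc^c),F)$ is literally $C^*(\Or(\Fc^\Qc),F|_\Qc)$, and (a) follows on taking cohomology.

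For part (b) I would first check that $\Qc=\Fc_{\geq Y}$ meets the hypotheses of (a): it is an $\Fc$-invariant interval containing $S$ (invariance because $Y\trianglelefteq\Gamma$ is weakly closed), and it lies in $\Fc^c$ because for $P\geq Y$ and $Q\in P^\Fc$ one has $C_S(Q)\leq C_\Gamma(Q)\leq C_\Gamma(Y)\leq Y\leq Q$, so $C_S(Q)=Z(Q)$. Then (a) reduces $L^*(\Fc;\Qc)$ to $\limproj{\Or(\Fc^\Qc)}{}^{*}\Zc|_\Qc$. Now comes the key step. Because $\Gamma_0\leq Y$, the quotient $\overline{\Gamma}:=\Gamma/Y$ is finite; and for $P,P'\in\Qc$ we have $C_\Gamma(P)\leq C_\Gamma(Y)\leq Y\leq P'$, so a short check shows that dividing $\Hom_\Gamma(P,P')$ by $C_\Gamma(P)$ and then by $\Inn(P')$ yields precisely $\Hom_{\overline{\Gamma}}(\overline{\Gamma}/\overline{P},\overline{\Gamma}/\overline{P'})$, where $\overline{P}=P/Y$. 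Together with the fact that every $p$-subgroup of $\overline{\Gamma}$ lifts to a discrete $p$-toral subgroup of $\Gamma$ containing $Y$, hence is $\Gamma$-conjugate into $S$, this makes $P\mapsto\overline{\Gamma}/\overline{P}$ an equivalence of $\Or(\Fc^\Qc)$ with the orbit category $\Or_p(\overline{\Gamma})$ of all $p$-subgroups of the finite group $\overline{\Gamma}$. Moreover $Z(P)\leq C_\Gamma(Y)\cap P\leq Y$ for $P\in\Qc$, so $Z(P)\leq Z(Y)$ and in fact $Z(P)=Z(Y)^{\overline{P}}$, with the conjugation maps as transition homomorphisms; hence under this equivalence $\Zc|_\Qc$ becomes the fixed-point functor $F_M\colon\overline{\Gamma}/\overline{P}\mapsto M^{\overline{P}}$ of the $\overline{\Gamma}$-module $M:=Z(Y)$, and $L^*(\Fc;\Qc)\cong\limproj{\Or_p(\overline{\Gamma})}{}^{*}F_M$.

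It then remains to show that $\limproj{\Or_p(\overline{\Gamma})}{}^{0}F_M=M^{\overline{\Gamma}}$ and $\limproj{\Or_p(\overline{\Gamma})}{}^{i}F_M=0$ for $i>0$. Writing $Z(Y)=\bigcup_n\Omega_n(Z(Y))$ as an increasing union of finite $\overline{\Gamma}$-submodules and using that $\Or_p(\overline{\Gamma})$ is a finite category (so that higher limits commute with the colimit) reduces this to $M$ finite, where it is the standard vanishing theorem for higher limits of a fixed-point functor over the $p$-subgroup orbit category — immediate when $\overline{\Gamma}$ is a $p$-group, since then $\overline{\Gamma}/\overline{\Gamma}$ is a terminal object of $\Or_p(\overline{\Gamma})$, and deduced from that case in general by a transfer argument over a Sylow $p$-subgroup (where $[\overline{\Gamma}:S]$ is invertible on $M$), or simply quoted from the $p$-local finite theory. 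Finally $\limproj{\Or_p(\overline{\Gamma})}{}^{0}F_M$ unwinds to the $\overline{\Gamma}$-fixed compatible families, i.e.\ to $M^{\overline{\Gamma}}=Z(Y)^{\Gamma}$, and since $Z(\Gamma)\leq C_\Gamma(Y)\leq Y$ forces $Z(\Gamma)\leq Z(Y)$ we obtain $Z(Y)^{\Gamma}=Z(Y)\cap Z(\Gamma)=Z(\Gamma)$, as required.

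The step I expect to be the real obstacle is this last one — the vanishing of higher limits of the fixed-point functor over $\Or_p(\overline{\Gamma})$; everything else is essentially bookkeeping, the most delicate piece of which is verifying that the centralizer quotient and the inner-automorphism quotient exactly cancel, so that $\Or(\Fc^\Qc)\simeq\Or_p(\overline{\Gamma})$ is a genuine equivalence carrying $\Zc$ onto $F_M$.
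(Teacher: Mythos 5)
Your argument is correct and is essentially the proof given in \cite[Lemma 1.15]{LL}: part (a) by observing that the cochain complex for $F$ over $\Or(\Fc^c)$ is supported on chains lying entirely in $\Qc$ (since $\Qc$ is closed under $\Fc$-conjugation and overgroups), so it coincides with the cochain complex over $\Or(\Fc^\Qc)$; and part (b) by identifying $\Or(\Fc^\Qc)$ with the orbit category of $p$-subgroups of the finite group $\overline{\Gamma}=\Gamma/Y$ (the cancellation works because $C_\Gamma(P)\leq Y\trianglelefteq\Gamma$ makes the left $C_\Gamma(P)$-action get absorbed into the right $P'$-action after passing to $\Gamma/Y$), carrying $\Zc|_\Qc$ to the fixed-point functor of $M=Z(Y)$, then reducing to finite coefficients by the filtered colimit $M=\bigcup_n\Omega_n(M)$ over the finite category, and finally invoking the standard acyclicity of the fixed-point functor over $\Or_p(\overline{\Gamma})$, with $\limproj{}^0$ computing $M^{\overline{\Gamma}}=Z(Y)\cap Z(\Gamma)=Z(\Gamma)$. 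The only place where you are relying on a result you do not fully reprove is the vanishing of higher limits of $P\mapsto M^P$ over the orbit category of $p$-subgroups of a finite group, and that is indeed the standard Jackowski--McClure type input that the literature (and \cite{LL}) also cites rather than reproves.
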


\begin{lem}[{\cite[Lemma 1.16]{LL}}] \label{exactseq}
Let $\Fc$ be a saturated fusion system over a discrete $p$-toral group $S$. Let $\Qc$ and $\Rc$ be $\Fc$-invariant intervals in $\Fc^c$ such that:
\begin{enumerate}[(i)]
\item $\Qc\cap\Rc=\emptyset$;
\item $\Qc\cup\Rc$ is an interval in $\Fc^c$;
\item if $Q\in\Qc$, and $R\in\Rc$, then $Q\not\leq R$.
\end{enumerate}
Then there is a short exact sequence of functor $\xymatrix{0\ar[r]& \Zc^\Rc_{\Fc^c}\ar[r] & \Zc^{\Qc\cup\Rc}_{\Fc^c}\ar[r] & \Zc^\Qc_{\Fc^c}\ar[r]& 0}$ and a long exact sequence
\[\xymatrix{0\ar[r] & L^0(\Fc;\Rc)\ar[r] & L^0(\Fc;\Qc\cup\Rc)\ar[r] & L^0(\Fc;\Qc)\ar[r] &\cdots\\
            \cdots\ar[r] & L^{k}(\Fc;\Rc)\ar[r] & L^{k}(\Fc;\Qc\cup\Rc)\ar[r] & L^{k}(\Fc;\Qc)\ar[r] &\cdots.}\]
In particular, the following statements hold.
\begin{enumerate}[(a)]
\item If $L^k(\Fc;\Rc)=0=L^k(\Fc;\Qc)$ for some $k\geq 0$, then $L^k(\Fc;\Qc\cup\Rc)=0$.
\item Assume $\Fc=\Fc_S(\Gamma)$ with $(\Gamma,S,Y)$ is a general setup, and $\Qc\cup\Rc=\Fc_{\geq Y}$. Then for any $k\geq 2$,
\[L^{k-1}(\Fc;\Qc)\cong L^k(\Fc;\Rc),\]
and there is a short exact sequence
\[\xymatrix{0\ar[r] & C_{Z(Y)}(\Gamma)\ar[r] & C_{Z(Y)}(\Gamma^*)\ar[r] & L^1(\Fc;\Rc)\ar[r] & 0,}
\]
where $\Gamma^*=\lbrace g\in\Gamma\mid P^g\in\Qc \text{ for some }P\in\Qc\rbrace$.
\end{enumerate}             
\end{lem}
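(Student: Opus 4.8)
The plan is to produce the short exact sequence of functors first, deduce the long exact sequence of higher limits from it by a formal argument, and then read off (a) and (b). I would begin by recording the shape that hypotheses (i)--(iii) impose on $\Qc$ and $\Rc$ inside the interval $\Qc\cup\Rc$: namely, $\Rc$ is closed under passage to subgroups that still lie in $\Qc\cup\Rc$, and $\Qc$ is closed under passage to overgroups that still lie in $\Qc\cup\Rc$ (if $R\in\Rc$, $P\leq R$ and $P\in\Qc\cup\Rc$, then $P\notin\Qc$ by (iii), so $P\in\Rc$; dually for $\Qc$). Combined with $\Fc$-invariance of $\Qc$ and $\Rc$, this yields the fact I will actually use: for every morphism $[\varphi]\colon P\to P'$ of $\Or(\Fc^c)$, if $P\in\Qc$ then $P'\in\Qc$ or $P'\notin\Qc\cup\Rc$, and if $P'\in\Rc$ then $P\in\Rc$ or $P\notin\Qc\cup\Rc$ (apply the closure properties to $\varphi(P)\leq P'$, using that $\varphi(P)\in P^\Fc$ lies in $\Qc$, resp.\ in $\Rc$, exactly when $P$ does).

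I would then define objectwise transformations $\alpha\colon\Zc^\Rc_{\Fc^c}\to\Zc^{\Qc\cup\Rc}_{\Fc^c}$ and $\beta\colon\Zc^{\Qc\cup\Rc}_{\Fc^c}\to\Zc^\Qc_{\Fc^c}$: let $\alpha_P$, $\beta_P$ be the identity of $Z(P)$ on the collection where source and target both take the value $Z(P)$ --- that is $\Rc$ for $\alpha$ and $\Qc$ for $\beta$ --- and the unique trivial map otherwise. Every naturality square commutes trivially when one of its four functor values is $1$, and in the remaining cases the fact above forces the other object of the morphism into a collection on which the transformation is again an identity, so the square commutes; hence $\alpha$ and $\beta$ are natural. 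Objectwise, $0\to\Zc^\Rc_{\Fc^c}(P)\to\Zc^{\Qc\cup\Rc}_{\Fc^c}(P)\to\Zc^\Qc_{\Fc^c}(P)\to0$ is exact in each of the three cases $P\in\Rc$, $P\in\Qc$, $P\notin\Qc\cup\Rc$, so we obtain a short exact sequence of functors. Applying $C^*(\Or(\Fc^c),-)$, whose cochain groups are products of coefficient values over chains of composable morphisms and which is therefore exact in the coefficient functor, gives a short exact sequence of cochain complexes, whose long exact cohomology sequence is the asserted long exact sequence in $L^*(\Fc;-)$; this step is purely formal and not affected by $\Or(\Fc^c)$ having infinitely many objects, since $L^*(\Fc;-)$ is defined as the cohomology of this complex. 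Statement (a) is then immediate from the segment $\cdots\to L^k(\Fc;\Rc)\to L^k(\Fc;\Qc\cup\Rc)\to L^k(\Fc;\Qc)\to\cdots$.

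For (b) I would assume $\Fc=\Fc_S(\Gamma)$ with $(\Gamma,S,Y)$ a general setup and $\Qc\cup\Rc=\Fc_{\geq Y}$, and treat the main case $\Qc\neq\emptyset$, in which necessarily $S\in\Qc$ (if $S\in\Rc$, downward closure forces $\Rc=\Fc_{\geq Y}$ and $\Qc=\emptyset$). One first checks $\Fc_{\geq Y}\subseteq\Fc^c$: for $P\in\Fc_{\geq Y}$ and $Q\in P^\Fc$ one has $Y\leq Q$ since $Y\trianglelefteq\Gamma$, hence $C_S(Q)\leq C_S(Y)\leq Y\leq Q$ and so $C_S(Q)=Z(Q)$. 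I would then combine three facts. First, Lemma~\ref{higherlimitswithS}(b) gives $L^0(\Fc;\Qc\cup\Rc)\cong Z(\Gamma)=C_{Z(Y)}(\Gamma)$ and $L^k(\Fc;\Qc\cup\Rc)=0$ for $k\geq1$. Second, $L^0(\Fc;\Rc)=0$, because a compatible family $(u(P))$ for $\Zc^\Rc_{\Fc^c}$ has $u(S)\in\Zc^\Rc_{\Fc^c}(S)=1$ as $S\in\Qc$, and compatibility along the inclusions $R\hookrightarrow S$ then forces $u(R)=1$ for all $R\in\Rc$. Third, $L^0(\Fc;\Qc)\cong C_{Z(Y)}(\Gamma^*)$: by Lemma~\ref{higherlimitswithS}(a) one has $L^0(\Fc;\Qc)\cong\limproj{\Or(\Fc^\Qc)}\Zc$; compatibility of a family along the inclusions $P\hookrightarrow S$, on which $\Zc$ acts by the inclusion $Z(S)\hookrightarrow Z(P)$, shows the family is constant equal to some $v\in Z(S)$, and compatibility along the morphisms $[c_g]\colon P\to P^g$ with $P,P^g\in\Qc$ --- equivalently $g\in\Gamma^*$ --- is precisely the condition that $v$ be fixed by $\Gamma^*$ (conversely any $\Gamma^*$-fixed $v\in Z(S)$ yields a compatible family); finally $C_{Z(S)}(\Gamma^*)=C_{Z(Y)}(\Gamma^*)$, since $Y\leq S\leq\Gamma^*$ forces a $\Gamma^*$-fixed element of $Z(S)$ to centralize $Y$ while lying in $S$, hence to lie in $C_S(Y)\leq Y$ and so in $Z(Y)$, and a $\Gamma^*$-fixed element of $Z(Y)$ to centralize $S$, hence to lie in $Z(S)$. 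Feeding these in, the initial portion $0\to L^0(\Fc;\Rc)\to L^0(\Fc;\Qc\cup\Rc)\to L^0(\Fc;\Qc)\to L^1(\Fc;\Rc)\to L^1(\Fc;\Qc\cup\Rc)$ of the long exact sequence collapses to $0\to C_{Z(Y)}(\Gamma)\to C_{Z(Y)}(\Gamma^*)\to L^1(\Fc;\Rc)\to0$, and for $k\geq2$ the portion with $L^{k-1}(\Fc;\Qc\cup\Rc)=0=L^k(\Fc;\Qc\cup\Rc)$ gives $L^{k-1}(\Fc;\Qc)\cong L^k(\Fc;\Rc)$.

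I expect the main obstacle to be the identification $L^0(\Fc;\Qc)\cong C_{Z(Y)}(\Gamma^*)$ in (b): one has to pin down exactly which elements of $\Gamma$ arise as morphisms between objects of $\Qc$, and then check that taking $\Gamma^*$-fixed points in $Z(S)$ and in $Z(Y)$ produces the same subgroup. Everything else --- naturality and objectwise exactness of $\alpha$ and $\beta$, exactness of $C^*(\Or(\Fc^c),-)$ in its coefficients, and the extraction of (a) and of the isomorphisms in (b) from the long exact sequence --- is routine bookkeeping.
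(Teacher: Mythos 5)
Your proof is correct and self-contained; the paper itself only cites \cite[Lemma 1.16]{LL} without reproducing an argument, and what you give is the natural one: the closure properties extracted from (i)--(iii) make the pointwise-defined transformations natural and objectwise exact, the cochain complex is exact in its coefficient functor because its terms are products, and the identifications of $L^0$ in part (b) are then read off the long exact sequence together with Lemma~\ref{higherlimitswithS}. One minor point worth flagging: you set aside the degenerate case $\Qc=\emptyset$ (so $\Rc=\Fc_{\geq Y}$), and there the stated short exact sequence in (b) would force $Z(\Gamma)=Z(Y)$ (since $\Gamma^*=\emptyset$ and $L^1(\Fc;\Fc_{\geq Y})=0$), which is false in general; this is harmless because every application in the paper has $S\in\Qc$, but it is an implicit hypothesis that should be named rather than hidden behind the phrase ``the main case.''
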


\begin{lem}[{\cite[Lemma 1.20]{LL}}]\label{2.7}
Let $(\Gamma,S,Y)$ be a general setup, $\Gamma_1$ a normal subgroup of $\Gamma$ containing $Y$, and $S_1=S\cap\Gamma_1$. Set $\Fc=\Fc_S(\Gamma)$ and $\Fc_1=\Fc_{S_1}(\Gamma_1)$. Let $\Qc\subseteq\Sc_p(S)_{\geq Y}$ be an $\Fc$-invariant interval such that $S\in\Qc$ and such that $\Gamma_1\cap Q\in\Qc$ whenever $Q\in\Qc$. Set $\Qc_1=\{Q\in\Qc\mid Q\leq \Gamma_1\}$. Then restriction induces an injection
\[
\xymatrix{L^1(\Fc;\Qc)\ar[r] & L^1(\Fc_1;\Qc_1)}.
\]

\end{lem}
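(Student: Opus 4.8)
The plan is to reduce the statement to Lemma \ref{higherlimitswithS}(b) applied inside $\Gamma_1$. The starting point is the description of $L^1$ via $1$-cocycles modulo $1$-coboundaries using the cochain complex $C^*(\Or(\Fc^c),\Zc_{\Fc^c}^\Qc)$ from \eqref{1-chainrule}; since $\Qc$ (and $\Qc_1$) is an $\Fc$-invariant interval closed under overgroups in the relevant sense (it contains $S$, hence all overgroups of any of its members up to $S$), the coboundary formulas \eqref{0-chainrule} and \eqref{1-chainrule} apply verbatim. The natural restriction map sends a normalized $1$-cocycle $t$ on $\Or(\Fc^c)$ supported on $\Qc$ to its restriction to morphisms between objects of $\Qc_1$, i.e. to subgroups of $\Gamma_1$; one first checks this is well-defined on cohomology, which is immediate since restriction commutes with the coboundary maps.

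The substance is injectivity. So suppose $t$ is a $1$-cocycle on $\Or(\Fc^c)$, supported on $\Qc$, whose restriction to $\Or(\Fc_1^c)$ (objects in $\Qc_1$) is a coboundary, say $t|_{\Qc_1} = du_1$ for some $0$-cochain $u_1$ on $\Fc_1$ with values in $\Zc_{\Fc_1^c}^{\Qc_1}$. I want to produce a $0$-cochain $u$ on $\Fc$, supported on $\Qc$, with $du = t$. The key point is to use the hypothesis $\Gamma_1 \cap Q \in \Qc_1$ for every $Q \in \Qc$: this lets one transport the value $u_1(\Gamma_1 \cap Q)$ up along the inclusion morphism $\Gamma_1 \cap Q \hookrightarrow Q$ in $\Or(\Fc^c)$, correcting by the cocycle value $t$ on that inclusion, to define a candidate $u(Q) \in Z(\Gamma_1 \cap Q)$ — or rather its image in $Z(Q)$, which makes sense because by Lemma \ref{action with discreteptoral}(c) the component $\Gamma_0 \leq Y \leq \Gamma_1 \cap Q$ acts trivially, and one needs to verify the resulting element actually lies in $Z(Q)$, not merely in $Z(\Gamma_1\cap Q)$, using the centricity of $Q$ together with the Frattini-type control Lemma \ref{FrattiniArg}(a) and the fact that $Z(Q) = C_S(Q)$ is essentially governed by $\Gamma_1$-data. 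Concretely one sets, for $Q\in\Qc$, $u(Q) = u_1(\Gamma_1\cap Q)\cdot t([\iota_Q])^{-1}$ where $\iota_Q\colon \Gamma_1\cap Q \to Q$ is the inclusion, read inside $Z(Q)$; for $Q\notin\Qc$ set $u(Q)=1$.

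It then remains to verify $du = t$, i.e. that for every morphism $[\varphi]\colon P \to Q$ in $\Or(\Fc^c)$ with $P\in\Qc$ one has $u(Q)^{\varphi^{-1}} u(P)^{-1} = t([\varphi])$. Using the cocycle identity \eqref{1-chainrule} for the factorization of $\varphi$ through the inclusions $\Gamma_1\cap P \hookrightarrow P$ and $\Gamma_1 \cap Q \hookrightarrow Q$ — note $\varphi$ restricts to an $\Fc_1$-morphism $\Gamma_1\cap P \to \Gamma_1 \cap Q$ since $\Gamma_1 \unlhd \Gamma$ — the verification becomes a diagram chase that reduces the identity on $t$ for $\varphi$ to the identity on $t|_{\Qc_1}$ for the restricted morphism, which holds by $t|_{\Qc_1} = du_1$. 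The only genuinely delicate step, and the one I expect to be the main obstacle, is showing that the proposed value $u(Q)$ is well-defined as an element of $Z(Q)$ and is independent of the choices involved (e.g. of representatives, and compatibility when $P < Q$ both lie in $\Qc$): this is exactly where the hypotheses "$S\in\Qc$" (so $\Qc$ reaches the top and inclusion morphisms are available) and "$\Gamma_1\cap Q\in\Qc$ whenever $Q\in\Qc$" are used, and where one must invoke Lemma \ref{action with discreteptoral}(c) to see that passing from $\Gamma_1\cap Q$ to $Q$ does not lose centralizer information coming from the torus. This is the compact-group analogue of the corresponding finite-group argument, and aside from this point the proof is formal.
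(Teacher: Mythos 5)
The paper gives no proof of this lemma; it is quoted directly from \cite[Lemma~1.20]{LL}.  So the question is whether your cochain-level sketch, once fleshed out, actually closes.  I don't think it does, and the obstruction is exactly the step you flag as ``delicate'' --- but it is more than delicate, and the tools you invoke do not address it.

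Concretely: since you may take $t$ inclusion-normalized, its restriction to $\Fc_1$ is also inclusion-normalized, so any $0$-cochain $u_1$ with $du_1 = t|_{\Fc_1}$ is forced to be \emph{constant}, say with value $z := u_1(S_1) \in Z(S_1)$ (from $1 = du_1([\iota_P^{S_1}]) = u_1(S_1)u_1(P)^{-1}$).  Your proposed $0$-cochain then collapses to $u(Q) = z\cdot t([\iota_Q])^{\pm1}$ (and the sign must be $+1$, not $-1$, for $du([\iota_Q])=t([\iota_Q])$ to hold).  Now working through the condition $u(Q)\in Z(Q)$: for $q\in Q$, the $1$-cocycle identity applied to $\iota_{\Gamma_1\cap Q}^{S_1}$ together with $c_q$, plus inclusion-normalization of $t$, gives $t([c_q|_{\Gamma_1\cap Q}]) = 1$ and hence $t([\iota_Q])^{q^{-1}} = t([\iota_Q])$.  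What remains is precisely $z^{q^{-1}} = z$ for all $q\in Q$ and all $Q\in\Qc$, i.e.\ $z\in Z(S)$.  But a priori $z$ only lies in $Z(S_1) = C_D(S_1)$, which is strictly larger than $Z(S)=C_D(S)$ in general.  Neither centricity of $Q$, nor Lemma~\ref{FrattiniArg}(a), nor Lemma~\ref{action with discreteptoral}(c) (which only kills the action of the torus $\Gamma_0$, a subgroup already inside $S_1$) bears on whether the finite $p$-group $S/S_1$ fixes $z$.  The freedom you have --- modifying $z$ by an element of $L^0(\Fc_1;\Qc_1)$ --- is exactly the freedom one must exploit, and showing that this coset of $L^0(\Fc_1;\Qc_1)$ in $Z(S_1)$ meets $Z(S)$ is the actual content of the lemma.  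Once that is known, the rest is indeed a routine diagram chase, but this is where the proof lives.

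The proof in \cite{LL} (following Oliver's finite version) goes through the rigid-map formalism of Lemma~\ref{inclusion normalized cocycle lem} and~\ref{inclusion normalized cocycle lem2} rather than by writing down a candidate $0$-cochain directly.  The relevant extra input is Lemma~\ref{inclusion normalized cocycle lem2}(a) applied to the fully normalized subgroup $S_1\in\Qc$ in the ambient system $\Fc$ (not $\Fc_1$): this produces an element $z''\in Z(N_S(S_1)) = Z(S)$ with $\tau|_{N_\Gamma(S_1)} = c_{z''}$.  One then subtracts off the coboundary of the constant cochain $z''$ (Lemma~\ref{inclusion normalized cocycle lem2}(b)), reducing to the case $\tau|_{N_\Gamma(S_1)}=\mathrm{Id}$, and uses the Frattini decomposition $\Gamma = \Gamma_1 N_\Gamma(S_1)$ together with Alperin's fusion theorem and Lemma~\ref{inclusion normalized cocycle lem2}(c) to propagate the vanishing.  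In other words the argument hinges on producing a replacement constant \emph{inside $Z(S)$} via the normalizer of the weakly closed subgroup $S_1$, which is a non-formal group-theoretic step that your sketch does not supply.  I'd encourage you to rework the proof along the rigid-map lines; as written the construction of $u$ does not produce a cochain valued in $Z(Q)$.
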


\subsection{Offenders}

\begin{defi}
For $S$ a discrete $p$-toral group set,
\begin{align*}
d(S)&=\text{sup}\{ \ord(A)\, ;\, A\text{ is an abelian subgroup of }S\},\\
\A(S)&=\{A\leq S \mid A\text{ is abelian and }\ord(A)=d(S)\}.
\end{align*}
We define the Thompson subgroup of $S$ by $J(S)= \langle\A(S)\rangle$. 
\end{defi}

\begin{defi}
Let $G$ be a finite group which acts faithfully on an abelian discrete $p$-toral group $D$.
An \emph{offender (in $G$ on $D$)} is an abelian $p$-subgroup $A\leq G$ such that 
\begin{enumerate}[(i)]
\item $D_0\leq C_D(A)$; and
\item $|A|\geq |D/C_D(A)|$.
\end{enumerate}
\end{defi}

\begin{defi}
Let $G$ be a finite group which acts faithfully on an abelian discrete $p$-toral group $D$.
A \emph{best offender} in $G$ on $D$ is an abelian $p$-subgroup $A\leq G$ such that,
\begin{enumerate}[(i)]
\item $D_0\leq C_D(A)$ and
\item $|A|\cdot|C_D(A)/D_0|\geq |B|\cdot|C_D(B)/D_0|$ for each $B\leq A$. 
\end{enumerate} 
Let $\A_D(G)$ be the set of all nontrivial best offenders in $G$ on $D$.
\end{defi}

By taking $B=1$ in (ii), one sees that a best offender is in particular an offender.
Conversely, every nontrivial offender contains a nontrivial best offender: take $B\leq A$ such that $|B||C_D(B)/D_0|$ is maximal among all nontrivial subgroups of $A$.

An important result about offenders is Timmesfeld's Replacement Theorem. Levi and Libman gave a generalization of this to actions on discrete $p$-toral groups.

\begin{defi}
Let $G$ be a finite group which acts on an abelian group $V$. We say that $G$ acts \emph{quadratically} if $[V,G,G]:=[[V,G],G]=1$.
If $G$ acts faithfully on $V$, a best offender of $G$ on $V$ is a \emph{quadratic best offender} if it acts quadratically on $V$.
\end{defi}

\begin{thm}[{\cite[Theorem 2.11]{LL}}]\label{timmesfeld replacement}
Let $V$ be a nontrivial abelian discrete $p$-toral group and let $A$ be a nontrivial finite abelian $p$-group.
Suppose $A$ acts faithfully on $V$ and that $A$ is a best offender on $V$.
Then there exists a nontrivial subgroup $B\leq A$ such that $B$ is a quadratic best offender on $V$.
\end{thm}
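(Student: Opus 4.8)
### Proof proposal for Theorem 1.21 (Timmesfeld Replacement for discrete $p$-toral groups)

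The plan is to reduce the discrete $p$-toral statement to the classical finite version of Timmesfeld's Replacement Theorem by truncating $V$ appropriately. By Lemma~\ref{action with discreteptoral}(a), we have $V = V_0 \times E$ with $E$ a finite abelian $p$-group and $V = \bigcup_{n\geq 1}\Omega_n(V)$. Since $A$ is a \emph{best offender} on $V$, condition (i) in the definition forces $V_0 \leq C_V(A)$, so $A$ acts trivially on $V_0$ and all the interesting action is concentrated in a finite quotient. The first step is therefore to locate a single finite $A$-invariant subgroup $W \leq V$ on which $A$ still acts faithfully and on which $A$ is still a best offender, with the crucial feature that $C_W(B) = C_V(B) \cap W$ behaves predictably for all $B\leq A$.

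The natural candidate is $W = \Omega_n(V)$ for $n$ large enough. Since $A$ is finite and acts faithfully on $V$, and since for each $1\neq a\in A$ there is some $v\in V$ with $[v,a]\neq 1$ lying in some $\Omega_{n(a)}(V)$, taking $n = \max_a n(a)$ (a finite max, as $A$ is finite) guarantees $A$ acts faithfully on $W=\Omega_n(V)$; enlarging $n$ further if necessary we may also assume $E \leq \Omega_n(V)$ so that $W = V_0[n]\times E$ where $V_0[n]=\Omega_n(V_0)$. Each $\Omega_n(V)$ is characteristic in $V$, hence $A$-invariant. Now I must check that $A$ is a best offender on $W$. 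For $B\leq A$, write $C_V(B) = (C_V(B))_0 \times (\text{finite part})$; because $B$ acts trivially on $V_0$ we have $(C_V(B))_0 = V_0$, and $C_V(B)/V_0$ is a finite $p$-group, so for $n$ large enough $C_W(B) = C_V(B)\cap W$ satisfies $|C_W(B)| = |C_V(B)\cap \Omega_n(V)|$ with $C_V(B)/V_0 \cong C_W(B)/V_0[n]$ canonically — this is where I set $n$ to dominate the exponents appearing in $C_V(B)/V_0$ for all (finitely many) subgroups $B\leq A$. Under this identification, $|B|\cdot|C_W(B)/W_0| = |B|\cdot|C_V(B)/V_0|$ for every $B\leq A$, so the best-offender inequality for $A$ on $V$ transfers verbatim to $A$ on $W$, and condition (i) ($W_0 = V_0[n] \leq C_W(A)$) is clear.

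Having produced the finite faithful quadratic-offender-eligible module $W$, apply the classical Timmesfeld Replacement Theorem: there is a nontrivial $B\leq A$ that is a best offender on $W$ acting quadratically, i.e. $[W,B,B]=1$. It remains to lift quadraticity and the best-offender property back to $V$. For quadraticity: $[V,B,B]$ is a subgroup of $V$ on which $B$ acts, and since $V = \bigcup_m \Omega_m(V)$ it suffices to show $[\Omega_m(V),B,B]=1$ for all $m$; but $B$ acts trivially on $V_0$, so $[\Omega_m(V),B] \leq [E\cdot V_0, B] \leq E \leq W$ (after possibly having chosen $n$ so $E\subseteq W$), whence $[\Omega_m(V),B,B] \leq [W,B,B] = 1$. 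Wait — more carefully, $[\Omega_m(V),B]$ need not land in $E$ for the $V_0$-component, but $[V_0,B]=1$ handles that component, and the finite-part commutators land in the $A$-invariant finite subgroup generated by $E$ and all $[x,a]$, which I fold into $W$ from the start. So $B$ is quadratic on $V$. For the best-offender property of $B$ on $V$: for $C\leq B$, again $|C|\cdot|C_V(C)/V_0| = |C|\cdot|C_W(C)/W_0|$ by the same identification, and $B$ being a best offender on $W$ gives exactly the required inequality on $V$. Thus $B$ is a nontrivial quadratic best offender on $V$.

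The main obstacle — and the only place real care is needed — is the bookkeeping in the second paragraph: choosing $n$ uniformly so that passing between $C_V(B)$ and $C_W(B)$ is an equality of indices (modulo $V_0$) simultaneously for all $B\leq A$, and so that the commutator subgroups $[V,B]$ are captured inside $W$. Finiteness of $A$ (hence of the poset of subgroups $B\leq A$ and of $A/C_A(V)$-action data) is what makes such a uniform $n$ exist; once $W$ is fixed the rest is a routine transfer of the finite theorem. Alternatively, one could mimic the finite proof directly over $V$, but truncation is cleaner since it lets us cite the finite Timmesfeld theorem as a black box.
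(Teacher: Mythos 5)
The paper itself does not prove this theorem---it is cited directly from \cite[Theorem 2.11]{LL}---so there is no in-paper proof to compare against; but your strategy of truncating to $W=\Omega_n(V)$ for a large uniform $n$ and then invoking the classical finite Timmesfeld Replacement Theorem is the natural and correct approach. Note, though, that you are essentially re-deriving Lemma~\ref{action on discrete=action on finite} (which is \cite[Lemma 2.7]{LL}, already stated in the paper): part (d) of that lemma says verbatim that for $n$ large enough, a subgroup $B\leq A$ is a (quadratic) best offender on $V$ if and only if it is one on $\Omega_n(V)$, so the entire bookkeeping in your second and third paragraphs can be replaced by a single citation, and the proof reduces to one line plus the finite theorem. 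The one place your argument genuinely wobbles is the quadraticity transfer: the containment $[\Omega_m(V),B]\leq[E\cdot V_0,B]\leq E$ fails, because the decomposition $V\cong V_0\times E$ is as abstract groups, not as $A$-modules---$E$ need not be $B$-invariant, and $[e,b]$ can have a nontrivial $V_0$-component. You notice this and propose to ``fold'' the relevant commutators into $W$, but the clean fix is simpler: since $V=V_0\,\Omega_n(V)$ for $n$ large and $[V_0,B]=1$, one computes $[v_0w,b]=[w,b]$ for $v_0\in V_0$, $w\in\Omega_n(V)$, $b\in B$, so $[V,B]=[\Omega_n(V),B]\leq\Omega_n(V)$ (the latter is $B$-invariant as a characteristic subgroup of $V$), and hence $[V,B,B]=[\Omega_n(V),B,B]=1$. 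With that patch the proof is complete.
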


The following lemma will allow us to navigate between action on an abelian discrete $p$-toral group and an action on a finite abelian $p$-group.

\begin{lem}[{\cite[Lemma 2.7]{LL}}]\label{action on discrete=action on finite}
Let $A$ be a finite group acting on an abelian discrete $p$-toral group $V$. Then there is some $N>0$ such that the following statements hold for all $n\geq N$:
\begin{enumerate}[(a)]
\item $V=V_0+\Omega_n(V)$;
\item $C_A(V)=C_A(\Omega_n(V))$.
\end{enumerate}
If in addition $A$ acts faithfully on $V$ and trivially on $V_0$, then
\begin{enumerate}
\item[(c)] for any $B\leq A$, $|C_{\Omega_n(V)}(B)|=|C_V(B)/V_0|\cdot|\Omega_n(V_0)|$ and hence,
\item[(d)] any $B\leq A$ is a (quadratic) best offender on $V$ if and only if $B$ is a (quadratic) best offender on $\Omega_n(V)$.
\end{enumerate}
\end{lem}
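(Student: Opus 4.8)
\emph{Proof idea.}
The plan is to fix, once and for all, a sufficiently large $N$, and then to read off all four statements from the structure theory of Lemma~\ref{action with discreteptoral}. By Lemma~\ref{action with discreteptoral}(a) we may write $V=V_0\times E$ with $E$ a finite abelian $p$-group; let $p^{N_0}$ be the exponent of $E$, so that $E\le\Omega_n(V)$ for every $n\ge N_0$. Next, the subgroups $K_n:=\ker\bigl(A\to\Aut(\Omega_n(V_0))\bigr)$ form a descending chain in the \emph{finite} group $A$, and since $V_0=\bigcup_n\Omega_n(V_0)$ their intersection is $\ker\bigl(A\to\Aut(V_0)\bigr)=C_A(V_0)$; hence this chain stabilises, and we may choose $N_1$ so that $K_n=C_A(V_0)$ for all $n\ge N_1$. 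Put $N=\max(N_0,N_1)$, and claim this $N$ works for every part of the lemma.

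Parts (a) and (b) need no further hypothesis. For (a): if $n\ge N$ then $\Omega_n(V)\ge E$, so $V_0+\Omega_n(V)$ contains both $V_0$ and $E$ and hence equals $V$. For (b), the inclusion $C_A(V)\le C_A(\Omega_n(V))$ is clear; conversely, if $a$ centralises $\Omega_n(V)$ for some $n\ge N$, then it centralises $\Omega_n(V)\cap V_0=\Omega_n(V_0)$, so $a\in K_n=C_A(V_0)$ and $a$ centralises $V_0$ as well; hence $a$ centralises $V_0+\Omega_n(V)=V$ by (a), i.e.\ $a\in C_A(V)$.

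For (c) and (d) assume in addition that $A$ acts faithfully on $V$ and trivially on $V_0$; then by (b) we have $C_A(\Omega_n(V))=C_A(V)=1$ for $n\ge N$, so $A$ acts faithfully on $\Omega_n(V)$ and ``best offender in $A$ on $\Omega_n(V)$'' makes sense. For (c), fix $B\le A$; since $B$ is trivial on $V_0$ we have $V_0\le C_V(B)$, and $V_0$ is the identity component of the abelian discrete $p$-toral group $C_V(B)$, so by Lemma~\ref{action with discreteptoral}(a), $C_V(B)=V_0\times E'$ with $E'\cong C_V(B)/V_0$ finite; as $E'$ embeds into $V/V_0$ its exponent divides $p^{N_0}$, whence for $n\ge N$
\[
C_{\Omega_n(V)}(B)=\Omega_n(V)\cap C_V(B)=\Omega_n(V_0)\times\Omega_n(E')=\Omega_n(V_0)\times E',
\]
which has order $|\Omega_n(V_0)|\cdot|C_V(B)/V_0|$. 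Finally (d) follows formally. For the best-offender condition: substituting the formula from (c) into $|B|\cdot|C_{\Omega_n(V)}(B)|\ge|C|\cdot|C_{\Omega_n(V)}(C)|$ and cancelling the common factor $|\Omega_n(V_0)|$ yields exactly $|B|\cdot|C_V(B)/V_0|\ge|C|\cdot|C_V(C)/V_0|$, the best-offender condition on $V$, while the requirement that $B$ be an abelian $p$-subgroup is the same on both sides. For the quadratic refinement: since each $b\in B$ kills $V_0$, every commutator $[v,b]$ with $v\in V$ equals $[e,b]$ for the $E$-component $e$ of $v$, which lies in $[\Omega_n(V),B]$ once $n\ge N_0$; thus $[V,B]=[\Omega_n(V),B]$ and hence $[V,B,B]=[\Omega_n(V),B,B]$, so $B$ acts quadratically on $V$ if and only if it does so on $\Omega_n(V)$.

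The only delicate point is the choice of $N$ in part (b): this is precisely where finiteness of $A$ is genuinely used, through the stabilisation of the descending chain $K_n$, and one must verify that a single $N$ can be chosen to serve simultaneously for all $n\ge N$, for all of (a)--(d), and, in (c) and (d), for each of the finitely many subgroups $B\le A$. Everything else amounts to routine bookkeeping with the decomposition $V=V_0\times E$ and the elementary identity $\Omega_n(H)=H\cap\Omega_n(V)$ for subgroups $H\le V$.
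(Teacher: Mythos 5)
Your proof is correct, and it is clean. Note that the paper does not actually give its own proof of this lemma — it is stated as a direct citation to \cite[Lemma 2.7]{LL} with no proof environment — so there is nothing in the paper to compare against. Your route (decompose $V=V_0\times E$ via Lemma~\ref{action with discreteptoral}(a), fix $N_0$ to absorb the finite part $E$ into $\Omega_n(V)$, and use finiteness of $A$ to stabilise the descending chain of kernels $K_n=\ker(A\to\Aut(\Omega_n(V_0)))$ at $C_A(V_0)$) is the natural one, and every step checks out: the identification $(C_V(B))_0=V_0$ used in (c) is correct since $V_0$ is a torus in $C_V(B)$ containing the maximal torus, the bound on the exponent of $E'\cong C_V(B)/V_0\le V/V_0\cong E$ is uniform in $B$ (so no further enlargement of $N$ is needed for the finitely many $B\le A$), and for (d) the cancellation of the common factor $|\Omega_n(V_0)|$ together with the observation $[V,B]=[E,B]\le\Omega_n(V)$ handles both the best-offender inequality and the quadratic refinement.
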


We finish with some notations for subgroups which are constructed in the same way as the Thompson subgroup.
Here $\A$ will be, in general, a set of offenders in $G$ on $D$.

\begin{defi}
Let $(\Gamma,S,Y)$ be a general setup, set $D=Z(Y)$ and $G=\Gamma/C_\Gamma(D)$.
Let $\A$ be a $G$-invariant set of abelian subgroups of $G$.
For any subgroup $H\leq G$, let $\A\cap H=\{A\in\A \mid A\leq H\}$ and $J_\A(H)=\langle\A\cap H\rangle$.
For any subgroup $\Gamma_1\leq \Gamma$, set $J_\A(\Gamma_1,D)$ (or $J_\A(\Gamma_1)$ when $D$ is understood) equal to the preimage in $\Gamma_1$ of $J_\A(\Gamma_1 C_\Gamma(D)/C_\Gamma(D))$.
\end{defi}

\begin{rem}\label{inequalitiesforJ}
If $H'$ is a subgroup of $H$ then $J_\A(H')\subseteq J_\A(H)$ and if $J_\A(H)\leq H'\leq H$ then $J_\A(H)=J_\A(H')$. In particular, $J_\A( J_\A(H))=J_\A(H)$. 
\end{rem}

\section{The odd case}
\subsection{The Norm Argument}
For a finite group $G$ which acts on an abelian discrete $p$-toral group $V$ (written multiplicatively), and a subgroup $H$ of $G$, define the \emph{norm map} $\Nc_H^G:C_V(H)\rightarrow C_V(G)$ by 
\[\Nc_H^G(v)=\prod_{g\in[G/H]}v^g\]
for each $v\in V$ and where $[G/H]$ is a set of representatives of the right cosets of $H$ in $G$. As the domain  is $C_V(H)$, the definition of $\Nc_H^G$ is independent of the choice of representatives. We say that $\Nc_H^G=1$ on $V$ if $\Nc_H^G$ is the constant map equal to the identity element of $V$. Since $\Nc_H^G=\Nc_K^G\Nc_H^K$ whenever $H\leq K\leq G$, one sees that $\Nc_H^G=1$ on $V$ whenever $\Nc_H^K$ or $\Nc_K^G$ is 1 on $V$.

\begin{thm}[{\cite[Theorem 3.2]{GL}}]\label{FiniteNormArg1}
Let $G$ be a finite group, $S\in\Syl_p(G)$, and $P$ be a $p$-group on which $G$ acts.
Let $\A$ be a nonempty set of subgroups of $S$, and set $J=\langle\A\rangle$.
Assume that $J$ is $\Fc_S(G)$-weakly closed and that
\begin{align}\label{NormCondition}
\text{if $A\in\A$, $g\in G$, $A\not\leq S^g$, and $V$ is a composition factor of $P$ under $G$, then $\Nc^A_{A\cap S^g}=1$ on $V$.}
\end{align} 
Then $C_P(G)=C_P(N_G(J))$.
\end{thm}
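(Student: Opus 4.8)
The plan is to reduce the statement to a counting argument over $G$-orbits of chains, in the spirit of the classical norm arguments of Glauberman. Write $C = C_G(J) \leq N = N_G(J)$ (note $N = N_G(J)$ since $J$ is generated by $\A$ and any automorphism permuting $\A$ normalizes $J$; here we use that $J$ is characteristic in the relevant sense). Obviously $C_P(G) \leq C_P(N)$, so the content is the reverse inclusion. Fix $v \in C_P(N_G(J))$; I want to show $v \in C_P(G)$, i.e. $v$ is fixed by all of $G$.

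First I would set up the norm over the coset space $G/N_G(J)$. Since $J$ is $\Fc_S(G)$-weakly closed, for any $g \in G$ the subgroup $J^g = J$ whenever $J^g \leq S$; more to the point, $S^g$ is another Sylow subgroup, and $J \cap S^g$ will generally be a proper subgroup of $J$. The weak closure hypothesis is what lets us apply a Sylow-intersection/Alperin-type fusion control: any element of $G$ can be built (up to the normalizer $N_G(J)$) out of maps that respect the $\A$-structure. Concretely, I would argue that $G = \langle N_G(J), \text{ stabilizers/transporters relating } S \text{ and its conjugates}\rangle$ and then use \eqref{NormCondition} term by term. Passing to a composition factor $V$ of $P$ under $G$, it suffices (by induction on composition length, since fixed points behave well under short exact sequences of $\F_p[G]$-modules in the sense needed — $C_P(G) = C_P(N_G(J))$ for $P$ follows from the same for each composition factor plus a diagram chase, because $v$ fixed on all factors and on a submodule forces $v$ fixed) to prove the claim when $P = V$ is irreducible.

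The heart is then: on the irreducible module $V$, for $A \in \A$ with $A \not\leq S^g$, hypothesis \eqref{NormCondition} gives $\Nc^A_{A \cap S^g} = 1$ on $V$. I would chain these: writing a general $g \in G$ via a Sylow-to-Sylow path (Alperin's fusion theorem for $\Fc_S(G)$, valid here since $G$ is finite), each step is conjugation by an element whose relevant $A \cap S^{g_i}$ is proper in $A$, so the corresponding partial norm is trivial, and composing along the path the total norm $\Nc^{N_G(J)}_{?}$ or rather the averaging operator $\frac{1}{[G:N_G(J)]}\sum_{g \in [G/N_G(J)]} g$ acts as the identity on $C_V(N_G(J))$. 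Since $v \in C_V(N_G(J))$, applying this averaging operator returns $v$; but the averaging operator (when it is well-defined as a norm map because the relevant partial norms vanish, making it independent of transversal) has image inside $C_V(G)$. Hence $v \in C_V(G)$, completing the induction and the proof.

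The main obstacle I anticipate is making the "Sylow-to-Sylow path" and the telescoping of partial norms precise: one must verify that for \emph{every} $g \in G$ (not just $g \notin N_G(J)$) the relevant intersection $A \cap S^g$ is proper in some $A \in \A$ — this is exactly where $J$ being $\Fc_S(G)$-weakly closed is essential, ensuring $J \not\leq S^g$ unless $S^g$ already contains a full conjugate of $J$, i.e. unless $g$ effectively lies in $N_G(J)$ up to the Sylow stabilizer. Controlling the bookkeeping so that the composite norm map is independent of choices (so that it genuinely lands in $C_V(G)$ and fixes $C_V(N_G(J))$ pointwise) is the delicate combinatorial point; the transitivity relation $\Nc^G_H = \Nc^G_K \Nc^K_H$ and the observation that $\Nc^G_H = 1$ once any intermediate norm is $1$ are the tools that tame it. The extension to discrete $p$-toral $V$ over a finite group $G$ is harmless since $G$ is finite and one may, if needed, pass to $\Omega_n(V)$ for large $n$ via Lemma~\ref{action on discrete=action on finite}, though here $P$ is already assumed to be a genuine ($p$-)group.
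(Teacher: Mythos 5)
The paper does not actually prove Theorem~\ref{FiniteNormArg1}: the proof environment only records that the statement is \cite[Theorem 3.2]{GL} and that a proof can be found in \cite[Theorem A1.4]{Gl71}. Judged on its own, your sketch has the right general shape (reduce to composition factors, then use a norm/averaging argument together with coprimality), but two of its steps do not go through as written.

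The reduction to composition factors is not a ``diagram chase.'' If $V\trianglelefteq P$ is $G$-invariant with $C_V(G)=C_V(N_G(J))$ and $C_{P/V}(G)=C_{P/V}(N_G(J))$, and $v\in C_P(N_G(J))$, then $g\mapsto v^gv^{-1}$ is a $1$-cocycle $G\to V$ that vanishes on $N_G(J)$, and to conclude it vanishes on all of $G$ you need restriction $H^1(G;V)\to H^1(N_G(J);V)$ to be injective. That injectivity holds because $[G:N_G(J)]$ is prime to $p$ --- the same coprimality your averaging operator silently relies on --- and this in turn holds because weak closure of $J$ forces $S\leq N_G(J)$; none of this is recorded, and your clause ``$v$ fixed on all factors and on a submodule forces $v$ fixed'' is false without it. More seriously, the irreducible case is not actually argued. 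Alperin's fusion theorem decomposes fusion homomorphisms between $p$-subgroups of $S$; it does not decompose an arbitrary element $g\in G$, so the ``Sylow-to-Sylow path'' for $g$ that your telescoping requires is not available. There is also a category mismatch: hypothesis~\eqref{NormCondition} tells you certain partial norms $\Nc^A_{A\cap S^g}$ are the \emph{constant} map $1$, whereas what you must prove is that the averaging operator $\frac{1}{[G:N_G(J)]}\Nc^G_{N_G(J)}$ is the \emph{identity} on $C_V(N_G(J))$, i.e.\ that the contributions to $\Nc^G_{N_G(J)}(v)$ from the non-identity cosets cancel. The transitivity rule $\Nc^G_H=\Nc^G_K\,\Nc^K_H$ and ``vanishing of an intermediate norm makes the composite vanish'' only let you prove norms are trivial, which is the wrong direction. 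The real argument in \cite[Theorem A1.4]{Gl71} is a careful double-coset (Mackey-type) computation that tracks which $A\in\A$ lie inside which $S^g$; that bookkeeping is precisely what the sketch elides and cannot be recovered from the tools you list.
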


\begin{proof}
This is stated in Glauberman and Lynd \cite[Theorem 3.2]{GL}. A proof can be found in \cite[Theorem A1.4]{Gl71}.
\end{proof}

\begin{cor}\label{NormArg1}
Let $G$ be a finite group, $S\in\Syl_p(G)$, and let $D$ be an abelian discrete $p$-toral group on which $G$ acts.
Let $\A$ be a nonempty set of subgroups of $S$, and set $J=\langle\A\rangle$.
Assume that $J$ is $\Fc_S(G)$-weakly closed and that there exists $N>0$ such that, for every $n\geq N$, condition \eqref{NormCondition} is satisfied for $P=\Omega_n(D)$.
Then $C_D(G)=C_D(N_G(J))$.  
\end{cor}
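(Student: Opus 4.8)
The plan is to reduce Corollary~\ref{NormArg1} to its finite counterpart, Theorem~\ref{FiniteNormArg1}, by passing to a sufficiently large finite ``layer'' $\Omega_n(D)$ of the abelian discrete $p$-toral group $D$. The action of the finite group $G$ on $D$ is determined by its action on $\Omega_n(D)$ once $n$ is large enough; this is exactly the content of Lemma~\ref{action on discrete=action on finite}, which supplies an integer $N_0$ such that for all $n\ge N_0$ we have $D=D_0+\Omega_n(D)$ and $C_G(D)=C_G(\Omega_n(D))$. Enlarging $N_0$ if necessary so that it also exceeds the $N$ in the statement, I may assume that for all $n\ge N$ the finite $p$-group $P:=\Omega_n(D)$ carries a $G$-action, the hypothesis~\eqref{NormCondition} holds for $P$, and $J=\langle\A\rangle$ is $\Fc_S(G)$-weakly closed (the latter is a hypothesis about $G$ and $S$ alone, so it transfers unchanged). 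Theorem~\ref{FiniteNormArg1} then applies and yields
\[
C_{\Omega_n(D)}(G)=C_{\Omega_n(D)}(N_G(J))
\]
for every $n\ge N$.

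It remains to promote this equality of fixed points on each finite layer to the desired equality $C_D(G)=C_D(N_G(J))$. The inclusion $C_D(G)\subseteq C_D(N_G(J))$ is trivial since $N_G(J)\le G$, so only the reverse inclusion needs argument. Here I would use the decomposition $D=D_0\times E$ with $E$ a finite abelian $p$-group from Lemma~\ref{action with discreteptoral}(a), together with the fact that $G$ acts trivially on the torus $D_0$ (Lemma~\ref{action with discreteptoral}(b), applied after noting $D_0$ is the identity component and any finite-image action of $G$ on the torus part is trivial). Consequently $D_0\le C_D(G)\subseteq C_D(N_G(J))$, so both $C_D(G)$ and $C_D(N_G(J))$ contain $D_0$, and it suffices to compare them modulo $D_0$; but modulo $D_0$ everything is finite, and indeed for $n$ large, $C_D(H)=D_0\cdot C_{\Omega_n(D)}(H)$ for any subgroup $H\le G$ acting trivially on $D_0$ (this is precisely Lemma~\ref{action on discrete=action on finite}(c), which gives the order count $|C_{\Omega_n(V)}(B)|=|C_V(B)/V_0|\cdot|\Omega_n(V_0)|$, forcing $C_{\Omega_n(D)}(H)=\Omega_n(C_D(H))$). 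Taking $H=N_G(J)$ and $H=G$ and invoking the finite equality established above gives $\Omega_n(C_D(G))=\Omega_n(C_D(N_G(J)))$ for all large $n$; since $C_D$ of any subgroup is again an abelian discrete $p$-toral group and hence the union of its $\Omega_n$'s (again Lemma~\ref{action with discreteptoral}(a)), letting $n\to\infty$ yields $C_D(G)=C_D(N_G(J))$.

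The one point requiring genuine care, and what I expect to be the main obstacle, is verifying that a single choice of $n$ simultaneously works for all the required properties and for both subgroups $G$ and $N_G(J)$ — in particular that the $N$ coming from the hypothesis, the $N_0$ coming from Lemma~\ref{action on discrete=action on finite} applied to $G$, and the analogous bound applied to $N_G(J)$, together with the condition $n$ being large enough that $C_D(-)$ is recovered from $\Omega_n$ for the finitely many relevant subgroups, can all be met at once. Since there are only finitely many subgroups in play and Lemma~\ref{action on discrete=action on finite} is ``eventually'' in $n$, this is a matter of taking the maximum of finitely many thresholds, so no real difficulty arises; but it should be stated explicitly so that the reduction to Theorem~\ref{FiniteNormArg1} is clean. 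With that in hand the corollary follows formally.
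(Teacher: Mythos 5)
Your overall strategy — reduce to the finite case via $\Omega_n(D)$ using Theorem~\ref{FiniteNormArg1} and then take the union over $n$ — is exactly the paper's, whose entire proof is the chain $C_D(G)=\bigcup_n C_{\Omega_n(D)}(G)=\bigcup_n C_{\Omega_n(D)}(N_G(J))=C_D(N_G(J))$, with the outer equalities from Lemma~\ref{action with discreteptoral}(a) and the middle one from Theorem~\ref{FiniteNormArg1}. However, your intermediate reasoning contains a genuine error: you assert that $G$ acts trivially on $D_0$, citing Lemma~\ref{action with discreteptoral}(b). That lemma says a discrete $p$-\emph{torus} must act trivially on an abelian discrete $p$-toral group; it does not say a \emph{finite} group acts trivially on the torus part, and indeed finite groups can and do act nontrivially on discrete $p$-tori. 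The corollary assumes neither triviality on $D_0$ nor faithfulness, so your appeal to Lemma~\ref{action on discrete=action on finite}(c), which requires both, is also unjustified.

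Fortunately, the detour is unnecessary. The identity $C_{\Omega_n(D)}(H)=\Omega_n(D)\cap C_D(H)=\Omega_n(C_D(H))$ holds for any $H\leq G$ as a trivial set-theoretic fact, with no hypotheses on the action whatsoever. Once Theorem~\ref{FiniteNormArg1} gives $C_{\Omega_n(D)}(G)=C_{\Omega_n(D)}(N_G(J))$ for all $n\geq N$, the conclusion follows immediately by taking the union over $n$, exactly as in your final sentence. Deleting the appeal to the $D=D_0\times E$ decomposition and to Lemma~\ref{action on discrete=action on finite}(c) leaves the paper's one-line proof.
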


\begin{proof}
By Lemma \ref{action with discreteptoral} and Theorem \ref{FiniteNormArg1}, we have
\[C_D(G)=\bigcup_{n\geq 1} C_{\Omega_n(D)}(G)=\bigcup_{n\geq 1} C_{\Omega_n(D)}(N_G(J))=C_D(N_G(J)).\]
\end{proof}

\subsection{Application of the Norm Argument}

\begin{lem}[{cf. \cite[Lemma 3.8]{GL}}]\label{barnorm}
Let $(\Gamma,S,Y)$ be a general setup for the prime $p$.
Let $D=Z(Y)$ and for every $X\subseteq \Gamma$ denote by $\overline{X}$ its image in $G=\Gamma/C_\Gamma(D)$.
If $Q\leq S$ is a subgroup containing $C_S(D)$, then $\overline{N_\Gamma(Q)}=N_G(\overline{Q})$.
\end{lem}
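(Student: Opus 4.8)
The plan is to reduce the statement to the Frattini argument, Lemma~\ref{FrattiniArg}(b), after locating a normal subgroup of which $Q$ is a Sylow $p$-subgroup. The inclusion $\overline{N_\Gamma(Q)}\subseteq N_G(\overline{Q})$ is immediate, since $\overline{g}\,\overline{Q}\,\overline{g}^{-1}=\overline{gQg^{-1}}$; the content is the reverse inclusion, and this is where the Frattini argument enters.

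First I would set up the relevant normal subgroup. Since $Y\trianglelefteq\Gamma$, the subgroup $D=Z(Y)$ is characteristic in $Y$, hence normal in $\Gamma$, so $C_\Gamma(D)\trianglelefteq\Gamma$. Put $\Gamma_1=QC_\Gamma(D)$; this is exactly the preimage of $\overline{Q}$ under the quotient homomorphism $\Gamma\to G$. Because $\Gamma_1$ contains the kernel $C_\Gamma(D)$, one checks directly that for $g\in\Gamma$ the condition $\overline{g}\in N_G(\overline{Q})$ is equivalent to $g\Gamma_1 g^{-1}=\Gamma_1$, so that $N_G(\overline{Q})=\overline{N_\Gamma(\Gamma_1)}$. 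Moreover $N_\Gamma(Q)\subseteq N_\Gamma(\Gamma_1)$ (normalizing $Q$ forces normalizing $QC_\Gamma(D)$, as $C_\Gamma(D)\trianglelefteq\Gamma$), so it will suffice to prove $N_\Gamma(\Gamma_1)=\Gamma_1\cdot N_\Gamma(Q)$.

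The main step, and the one requiring the structure theory of discrete $p$-toral groups, is to show $Q\in\Syl_p(\Gamma_1)$. Since $C_S(D)\leq Q\leq S$, one has $Q\cap C_\Gamma(D)=Q\cap(S\cap C_\Gamma(D))=C_S(D)$, so that $|\Gamma_1:Q|=|C_\Gamma(D):C_S(D)|$. By Lemma~\ref{action with discreteptoral}(c), $\Gamma_0\leq C_\Gamma(D)$, so $\Gamma_0$ is the identity component of both $C_\Gamma(D)$ and $\Gamma_1$; passing to the finite group $\Gamma/\Gamma_0$, the image of $C_\Gamma(D)$ is normal and the image of $S$ is a Sylow $p$-subgroup, whence $C_S(D)/\Gamma_0=(S\cap C_\Gamma(D))/\Gamma_0$ is a Sylow $p$-subgroup of $C_\Gamma(D)/\Gamma_0$. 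Thus $|\Gamma_1:Q|$ is finite and prime to $p$, and since $Q/\Gamma_0$ is a finite $p$-group this forces $Q/\Gamma_0\in\Syl_p(\Gamma_1/\Gamma_0)$, i.e. $Q\in\Syl_p(\Gamma_1)$.

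Finally, with $\Gamma_1\trianglelefteq N_\Gamma(\Gamma_1)$ and $Q\in\Syl_p(\Gamma_1)$, the Frattini argument gives $N_\Gamma(\Gamma_1)=\Gamma_1\cdot N_{N_\Gamma(\Gamma_1)}(Q)=\Gamma_1\cdot N_\Gamma(Q)$, using again $N_\Gamma(Q)\subseteq N_\Gamma(\Gamma_1)$. Applying the quotient homomorphism and using $\overline{\Gamma_1}=\overline{Q}\subseteq\overline{N_\Gamma(Q)}$ then yields
\[N_G(\overline{Q})=\overline{N_\Gamma(\Gamma_1)}=\overline{\Gamma_1}\cdot\overline{N_\Gamma(Q)}=\overline{N_\Gamma(Q)},\]
which is the assertion. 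I expect the only genuinely delicate point to be the verification that $Q$ is a Sylow $p$-subgroup of $\Gamma_1$ — equivalently, that $C_S(D)$ is a Sylow $p$-subgroup of $C_\Gamma(D)$ — after which the argument runs exactly as in the classical finite-group case.
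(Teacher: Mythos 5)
Your proof is correct and runs along essentially the same lines as the paper's: both reduce the claim to the Frattini argument applied to $QC_\Gamma(D)\trianglelefteq N$ (the preimage of $N_G(\overline Q)$, which is $N_\Gamma(\Gamma_1)$ in your notation) with $Q$ as a Sylow $p$-subgroup. The only real difference is cosmetic: where the paper justifies that $Q\in\Syl_p(QC_\Gamma(D))$ by invoking that $D\trianglelefteq\Gamma$ is fully centralized in $\Fc_S(\Gamma)$, you re-derive the same fact directly by passing to the finite quotient $\Gamma/\Gamma_0$ and using that a Sylow subgroup meets a normal subgroup in a Sylow subgroup — a slightly more elementary route to the same conclusion.
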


\begin{proof} The proof is exactly the same as that of \cite[Lemma 3.8]{GL}. The key is  the Frattini Argument, which is also true for virtually discrete $p$-toral group (Lemma \ref{FrattiniArg} part (b)).

Let $N$ be the preimage of $N_G(\overline{Q})$ in $\Gamma$. We have clearly $C_\Gamma(D)N_\Gamma(Q)\leq N$. For the other inclusion, notice that $QC_\Gamma(D)$ is normal in $N$ and that $QC_\Gamma(D)\cap S=QC_S(D)=Q$ ($Q$ contains $C_S(D)$) is a Sylow $p$-subgroup of $QC_\Gamma(D)$ ($D$ is normal in $\Gamma$ and thus fully centralized in $\Fc_S(\Gamma)$). Hence, by the Frattini Argument, $N=(QC_\Gamma(D))N_N(Q)\leq QC_\Gamma(D)N_\Gamma(Q)=C_\Gamma(D)N_\Gamma(Q)$.
\end{proof}

When $(\Gamma,S,Y)$ is a reduced setup, $C_S(D)=Y$. Thus, by Lemma \ref{barnorm}, for every subgroup $P\in\Sc_p(S)_{\geq Y}$, $N_G(\overline{P})=\overline{N_\Gamma(P)}$.

\begin{lem}[{\cite[Lemma 3.1]{GL}}]\label{pactsonp'el}
Let $A$ be a $p$-group acting on an elementary abelian $p$-group $V$.
\begin{enumerate}[(a)]
\item If $p$ is odd and $A$ acts quadratically on $V$, then $\Nc_{A_0}^A=1$ on $V$ for every proper subgroup $A_0$ of $A$.
\item If $p=2$, then $\Nc_{A_0}^A=1$ on $V$ for every subgroup $A_0$ of $A$ satisfying either of the following conditions:
\begin{enumerate}[(i)]
\item $|A:A_0|\geq 2$ and $C_V(A_0)=C_V(A)$, or
\item $|A:A_0|\geq 4$ and $A$ acts quadratically on $V$.
\end{enumerate}
\end{enumerate}
\end{lem}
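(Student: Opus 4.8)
The plan is to reduce every case to an elementary computation in $V$, regarded additively as an $\F_p$-vector space. For $g\in A$ let $N_g$ be the $\F_p$-linear operator $v\mapsto v^g-v$ on $V$, so that $v^g=v+N_g(v)$ and $N_{gh}=N_g+N_h+N_hN_g$. The quadraticity hypothesis $[V,A,A]=1$ is exactly the statement that $N_gN_h=0$ for all $g,h\in A$, hence equivalent to $g\mapsto N_g$ being a homomorphism from $A$ into $(\End_{\F_p}(V),+)$. Fix now a proper subgroup $A_0<A$ and an element $v\in C_V(A_0)$ (the domain of $\Nc_{A_0}^A$). Choosing coset representatives,
\[
\Nc_{A_0}^A(v)=\sum_{g\in[A/A_0]}v^g=|A:A_0|\cdot v+\sum_{g\in[A/A_0]}N_g(v),
\]
and since $A$ is a $p$-group and $A_0$ is proper, $p$ divides $|A:A_0|$, so the first term vanishes in the $\F_p$-space $V$. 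Thus in all cases it suffices to prove $\sum_{g\in[A/A_0]}N_g(v)=0$.

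I would then treat (a) and the quadratic part of (b), namely (b)(ii), together. With $A$ quadratic, $\phi\colon g\mapsto N_g(v)$ is a homomorphism $A\to V$ killing $A_0$ (as $v\in C_V(A_0)$). Set $N=\Ker\phi\geq A_0$; then $\phi$ factors through an injection $\overline{\phi}\colon A/N\hookrightarrow V$, so $A/N$ is an elementary abelian $p$-group, and $\phi$ is constant (equal to $\overline{\phi}(Ng)$) on each right coset $Ng$ of $N$. Since $A_0\leq N$, each $Ng$ is a union of $|N:A_0|$ right cosets of $A_0$, so grouping the right cosets of $A_0$ by the coset of $N$ containing them gives
\[
\sum_{g\in[A/A_0]}N_g(v)=|N:A_0|\cdot\sum_{x\in A/N}\overline{\phi}(x).
\]
The inner sum is $0$ when $N=A$; when $N<A$, I would partition $A/N$ into cosets of a subgroup of order $p$, each of which contributes $\frac{p(p-1)}{2}\,\overline{\phi}(a)$ for a generator $a$ (the $p\,\overline{\phi}(c)$ terms dying in $V$). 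For $p$ odd this is $0$, so the inner sum vanishes and (a) is done. For $p=2$ each coset contributes $\overline{\phi}(a)$, so the inner sum equals $\frac{1}{2}|A/N|\cdot\overline{\phi}(a)$, which is $0$ whenever $|A/N|\neq 2$; this settles (b)(ii) except in the case $|A/N|=2$.

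Case (b)(i) is immediate and separate: there $C_V(A_0)=C_V(A)$ forces $v\in C_V(A)$, so every $N_g(v)$ vanishes and $\Nc_{A_0}^A(v)=|A:A_0|\cdot v=0$ since $|A:A_0|\geq 2$ is even. Finally, in the remaining case $|A/N|=2$ of (b)(ii), we have $|N:A_0|=|A:A_0|/2\geq 2$, which is even, so $|N:A_0|\cdot\overline{\phi}(a)=0$ and the sum vanishes here as well — and this is precisely where the hypothesis $|A:A_0|\geq 4$, rather than just $\geq 2$, is needed. The lemma is elementary, so I do not expect a genuine obstacle; the one point requiring care is exactly this last step, where $A_0$ need not be normal in $A$ and the norm has to be pushed through the quotient $A/\Ker\phi$, with the index $|N:A_0|$ tracked closely enough to see why index $4$ works while index $2$ does not.
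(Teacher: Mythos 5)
The paper does not prove this lemma; it is stated as a citation to \cite[Lemma 3.1]{GL}, so there is no internal argument to compare against. Your proof is correct and is the natural linearization argument: quadraticity makes $g\mapsto N_g$ (and hence $g\mapsto N_g(v)$ for fixed $v\in C_V(A_0)$) a homomorphism, and the norm becomes $|N:A_0|\sum_{x\in A/N}\overline{\phi}(x)$; the per-$\langle a\rangle$-coset contribution $\tfrac{p(p-1)}{2}\overline{\phi}(a)$ vanishes for odd $p$, while for $p=2$ the total collapses to $\tfrac{1}{2}|A:A_0|\cdot\overline{\phi}(a)$, which vanishes exactly when $|A:A_0|\geq 4$, and case (b)(i) is immediate since $v\in C_V(A)$. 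This matches what one expects the cited Glauberman--Lynd argument to do, and the bookkeeping at the boundary case $|A/N|=2$ — where the even index $|N:A_0|\geq 2$ is what saves you — is handled correctly.
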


The following is a generalization of \cite[Proposition 3.3]{LL} for odd primes.
\begin{prop}[{cf. \cite[Proposition 3.9]{GL}}]\label{oddcase k=1}
Let $(\Gamma,S,Y)$ be a reduced setup, set $D=Z(Y)$, $G=\Gamma/C_\Gamma(D)$ and $\Fc=\Fc_S(\Gamma)$. Let $\A$ be a $G$-invariant collection of $p$-subgroups of $G$, each of which acts non-trivially and quadratically on $D$. Let $\Rc\subseteq \Sc_p(S)_{\geq Y}$ be an $\Fc$-invariant interval of subgroups of $S$ such that $Y\in \Rc$ and $J_\A(S)\not\in \Rc$.
If $p$ is odd, then 
\[L^1(\Fc,\Rc)=0.\]
\end{prop}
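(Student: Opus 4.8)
The plan is to adapt the proof of \cite[Proposition 3.9]{GL}, using the discrete $p$-toral form of the Norm Argument (Corollary \ref{NormArg1}) and Lemma \ref{barnorm} in place of their finite counterparts. Write $D=Z(Y)$, and for $X\subseteq\Gamma$ let $\overline{X}$ denote its image in $G=\Gamma/C_\Gamma(D)$. Since $D$ is characteristic in $Y\unlhd\Gamma$, we have $D\unlhd\Gamma$, so $\Gamma$ acts on $D$ with kernel $C_\Gamma(D)\supseteq\Gamma_0$ by Lemma \ref{action with discreteptoral}(c); hence $G$ is finite, acts faithfully on the abelian discrete $p$-toral group $D$, and $\overline{S}\in\Syl_p(G)$ (being the image in the finite quotient $G$ of the Sylow subgroup $S$ of $\Gamma$). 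Set $\Qc=\Sc_p(S)_{\geq Y}\smallsetminus\Rc$. First I would record the routine facts: every subgroup of $S$ containing $Y$ is $\Fc$-centric (for such $P$, $C_S(P)\leq C_S(Y)\leq C_\Gamma(Y)\leq Y\leq P$, and every $\Fc$-conjugate of $P$ again contains $Y$); $\Qc$ is an $\Fc$-invariant interval in $\Fc^c$; $\Qc\cup\Rc=\Sc_p(S)_{\geq Y}$; and $\Qc,\Rc$ satisfy (i)--(iii) of Lemma \ref{exactseq}, where (iii) holds because $Y\in\Rc$ and $\Rc$ is an interval, so $Q\leq R$ with $R\in\Rc$ forces $Q\in\Rc$. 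Since $J_\A(S)\supseteq Y=C_S(D)$ and $J_\A(S)\notin\Rc$, we have $J_\A(S)\in\Qc$; in particular $J_\A(S)\neq Y$, so $J_\A(\overline{S})=\overline{J_\A(S)}\neq 1$ and $\A\cap\overline{S}\neq\emptyset$.

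Lemma \ref{exactseq}(b) then produces a short exact sequence
\[0\longrightarrow C_D(\Gamma)\longrightarrow C_D(\Gamma^*)\longrightarrow L^1(\Fc;\Rc)\longrightarrow 0,\qquad \Gamma^*=\{g\in\Gamma\mid P^g\in\Qc\text{ for some }P\in\Qc\},\]
so it suffices to show $C_D(\Gamma^*)=C_D(\Gamma)$. The key observation is $N_\Gamma(J_\A(S))\subseteq\Gamma^*\subseteq\Gamma$: the first inclusion holds because $J_\A(S)\in\Qc$ is fixed under conjugation by any element of $N_\Gamma(J_\A(S))$. Since $C_\Gamma(D)$ acts trivially on $D$ we have $C_D(\Gamma)=C_D(G)$, and Lemma \ref{barnorm} (which applies as $J_\A(S)\geq C_S(D)$ in a reduced setup) gives $C_D(N_\Gamma(J_\A(S)))=C_D\bigl(\overline{N_\Gamma(J_\A(S))}\bigr)=C_D(N_G(J_\A(\overline{S})))$. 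Therefore, once $C_D(G)=C_D(N_G(J_\A(\overline{S})))$ is proved, the chain
\[C_D(\Gamma)=C_D(G)=C_D(N_G(J_\A(\overline{S})))=C_D(N_\Gamma(J_\A(S)))\supseteq C_D(\Gamma^*)\supseteq C_D(\Gamma)\]
collapses to equalities, giving $L^1(\Fc;\Rc)=0$.

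It remains to obtain $C_D(G)=C_D(N_G(J_\A(\overline{S})))$, which I would deduce from the Norm Argument (Corollary \ref{NormArg1}) applied with the set of subgroups $\A\cap\overline{S}$ of $\overline{S}$ and $J=\langle\A\cap\overline{S}\rangle=J_\A(\overline{S})$. Two hypotheses need checking. First, $J_\A(\overline{S})$ is $\Fc_{\overline{S}}(G)$-weakly closed: if $g\in G$ and $J_\A(\overline{S})^g\leq\overline{S}$, then each $A\in\A\cap\overline{S}$ has $A^g\in\A$ (by $G$-invariance of $\A$) and $A^g\leq J_\A(\overline{S})^g\leq\overline{S}$, hence $A^g\in\A\cap\overline{S}$ and $J_\A(\overline{S})^g\leq J_\A(\overline{S})$; comparing orders forces $J_\A(\overline{S})^g=J_\A(\overline{S})$. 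Second, condition \eqref{NormCondition} must hold with $P=\Omega_n(D)$ for every $n$: given $A\in\A\cap\overline{S}$, $g\in G$ with $A\not\leq\overline{S}^g$, and a composition factor $V$ of $\Omega_n(D)$ under $G$, observe that $V$ is elementary abelian and is an $A$-subquotient of $D$ realized through a $G$-invariant, hence $A$-invariant, series; since $A$ acts quadratically on $D$ it acts quadratically on the $A$-invariant subgroup $\Omega_n(D)$ and therefore on $V$, and as $A\cap\overline{S}^g$ is a proper subgroup of $A$, Lemma \ref{pactsonp'el}(a) — this is where the hypothesis that $p$ is odd is used — gives $\Nc^A_{A\cap\overline{S}^g}=1$ on $V$. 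Thus Corollary \ref{NormArg1} applies and yields the desired equality.

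The step I expect to be the real content, rather than bookkeeping, is the verification of the Norm Condition \eqref{NormCondition}: that quadratic action on $D$ passes to $\Omega_n(D)$ and to its $G$-composition factors, so that Lemma \ref{pactsonp'el}(a) becomes applicable. This, together with the weak closure of $J_\A(\overline{S})$, is where the $G$-invariance and the quadraticity of the members of $\A$ are genuinely used; everything else is the reduction through Lemma \ref{exactseq}(b), the sandwiching $N_\Gamma(J_\A(S))\subseteq\Gamma^*\subseteq\Gamma$, and transporting the norm equality across $\Gamma\to G$ by means of Lemma \ref{barnorm}.
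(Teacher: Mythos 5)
Your proof is correct and follows essentially the same route as the paper: reduce via Lemma \ref{exactseq}(b) to $C_D(\Gamma)=C_D(\Gamma^*)$, sandwich $N_\Gamma(J_\A(S))\subseteq\Gamma^*\subseteq\Gamma$, transfer to $G$ by Lemma \ref{barnorm}, and conclude with the Norm Argument (Corollary \ref{NormArg1}) using quadraticity and Lemma \ref{pactsonp'el}(a). The only difference is that you spell out several verifications (centricity of subgroups containing $Y$, nonemptiness of $\A\cap\overline{S}$, weak closure of $J_\A(\overline{S})$, and passage of quadratic action to composition factors of $\Omega_n(D)$) that the paper leaves implicit.
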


\begin{proof}
Set $\Qc$ the set of subgroups of $S$ which contain $Y$ and are not in $\Rc$.
Let $\Gamma^*\subseteq \Gamma$ be the set of those $g\in\Gamma$ for which there is $Q\in\Qc$ with $Q^g\in \Qc$. Then $\Qc$ and $\Rc$ are $\Fc$-invariant intervals which satisfy the hypothesis of Lemma \ref{exactseq}. Hence it is enough to show  that $C_D(\Gamma)=C_D(\Gamma^*)$ by part (b) of Lemma \ref{exactseq}.

Let $\overline{S}$ be the image of $S$ in $G=\Gamma/C_\Gamma(D)$. As $\A$ is $G$-invariant, $J_\A(\overline{S})$ is $\Fc_{\overline{S}}(G)$-weakly closed  and, as each $A\in\A$ acts quadratically on $D$, by Lemma \ref{pactsonp'el} the hypotheses of Corollary \ref{NormArg1} are satisfied. Then, 
 \[C_D(\Gamma)=C_D(G)=C_D(N_G(J_\A(\overline{S})))\]
As $C_S(D)\leq J_\A(S,D)=J_\A(S)$ by definition, by Lemma \ref{barnorm} $N_G(J_\A(\overline{S}))=N_\Gamma(J_\A(S))$ and then $C_D(\Gamma)= C_D(N_\Gamma(J_\A(S))$.
Since $J_\A(S,D)\in\Qc$ by assumption, we have $N_\Gamma(J_\A(S))\subseteq \Gamma^*$ by the definition of $\Gamma^*$. Thus, 
\[C_D(\Gamma)=C_D(N_\Gamma(J_\A(S)))\geq C_D(\Gamma^*)\geq C_D(\Gamma)\]
(the last inequality holds because $\Gamma^*\subseteq\Gamma$) and this completes the proof.  
\end{proof}

As in the finite case, we can obtain some simplifications in the proof of \cite[Proposition 3.5]{LL}, when $p$ is odd.

\begin{prop}\label{oddcase}
Let $(\Gamma,S,Y)$ be a general setup for the prime $p$.
Set $\Fc=\Fc_S(\Gamma)$, $D=Z(Y)$ and $G=\Gamma/C_\Gamma(D)$. Let $\Rc\subseteq \Sc_p(S)_{\geq Y}$ be an $\Fc$-invariant interval such that for all $Q\in\Sc_p(S)_{\geq Y}$, $Q\in \Rc$ if and only if $J_{\A_D(G)}(Q)\in\Rc$. If $p$ is odd, then $L^k(\Fc;\Rc)=0$ for all $k\geq 1$.
\end{prop}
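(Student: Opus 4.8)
The plan is to reduce to the case $k=1$ handled by Proposition~\ref{oddcase k=1} by an induction on $\ord(\Gamma)$, combined with the long exact sequence of Lemma~\ref{exactseq} and a passage to a normal subgroup via Lemma~\ref{2.7}. First I would fix notation: set $\A=\A_D(G)$, the set of nontrivial best offenders in $G$ on $D$. By Timmesfeld's Replacement Theorem~\ref{timmesfeld replacement}, every nontrivial best offender contains a nontrivial \emph{quadratic} best offender, so I would work with the subcollection $\A^q\subseteq\A$ of quadratic best offenders and note that, since $J_{\A_D(G)}(Q)$ is controlled by the quadratic ones in the relevant sense, the hypothesis "$Q\in\Rc$ iff $J_{\A_D(G)}(Q)\in\Rc$" can be propagated. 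The base case of the induction is when $\Rc$ contains no proper interval structure worth splitting, or when $\ord(\Gamma)$ is minimal; there the statement either is vacuous ($\Rc=\emptyset$) or follows because $S\in\Rc$ forces $L^k(\Fc;\Rc)\cong L^k(\Fc;\Fc_{\geq Y})$, which vanishes for $k>0$ by Lemma~\ref{higherlimitswithS}(b).

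For the inductive step I would distinguish according to whether $J_{\A^q}(S)\in\Rc$ or not. If $J_{\A^q}(S)\notin\Rc$, then $\Rc$ satisfies exactly the hypotheses of Proposition~\ref{oddcase k=1} (with $\A=\A^q$), giving $L^1(\Fc;\Rc)=0$; to get $L^k(\Fc;\Rc)=0$ for higher $k$ I would then invoke the dimension-shifting isomorphism $L^{k-1}(\Fc;\Qc)\cong L^k(\Fc;\Rc)$ from Lemma~\ref{exactseq}(b), applied with $\Qc$ the complement of $\Rc$ inside $\Fc_{\geq Y}$, after checking that $\Qc$ is again an $\Fc$-invariant interval satisfying the "$Q\in\Qc$ iff $J_\A(Q)\in\Qc$" condition and that hypotheses (i)--(iii) of Lemma~\ref{exactseq} hold; the point is that $\Qc$ is closed under overgroups and $\Rc$ under subgroups-down-to-$Y$, so no $Q\in\Qc$ lies in any $R\in\Rc$. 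Then induction on $k$ finishes this branch. If instead $J_{\A^q}(S)\in\Rc$, then also $S=J_\A(S)\cdot(\text{something})$... more precisely by Remark~\ref{inequalitiesforJ} and the hypothesis on $\Rc$, $S\in\Rc$, whence $L^k(\Fc;\Rc)\cong L^k(\Fc;\Fc_{\geq Y})=0$ for $k\geq 1$ directly by Lemma~\ref{higherlimitswithS}.

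An alternative, and probably cleaner, organization for the branch $J_{\A^q}(S)\in\Rc$: pass to the normal subgroup $\Gamma_1=J_\A(\Gamma,D)$ (the preimage in $\Gamma$ of $\langle\A^q\rangle$, which is normal in $G$ since $\A$ is $G$-invariant), set $S_1=S\cap\Gamma_1$ and $\Rc_1=\{Q\in\Rc\mid Q\leq\Gamma_1\}$, and use Lemma~\ref{2.7} to inject $L^1(\Fc;\Rc)$ into $L^1(\Fc_1;\Rc_1)$; since $\ord(\Gamma_1)<\ord(\Gamma)$ unless $\Gamma=\Gamma_1$ (and the case $\Gamma=\Gamma_1$ means $S=J_\A(S)$, so $S\in\Rc$ and we are done as above), the induction hypothesis kills $L^1(\Fc_1;\Rc_1)$, hence $L^1(\Fc;\Rc)=0$, and higher $k$ follow by dimension shifting as before. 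I would choose whichever of these two arguments makes the bookkeeping of the intervals $\Qc,\Rc$ least painful.

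The main obstacle I anticipate is verifying that the auxiliary intervals stay within the framework: one must check at each stage that $\Qc$ (the complement) and $\Rc_1$ (the restriction to $\Gamma_1$) are genuinely $\Fc$- resp.\ $\Fc_1$-invariant intervals, that they still satisfy the characterization in terms of the Thompson-type subgroup $J_{\A_D(G)}$ (this requires the compatibility of best offenders in $G$ with best offenders in the quotient $\Gamma_1 C_\Gamma(D)/C_\Gamma(D)$, which is where Lemma~\ref{action on discrete=action on finite} and the behavior of $\A_D$ under restriction enter), and that the numerical hypotheses of Lemma~\ref{exactseq}(b) on a general setup are met for $(\Gamma_1,S_1,Y)$. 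Keeping track of which offenders are quadratic after each reduction, and ensuring $Y\in\Rc$ persists so that $\Rc$ never becomes empty prematurely, is the delicate part; the homological machinery itself is then routine.
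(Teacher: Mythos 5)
There is a concrete gap: you apply Proposition~\ref{oddcase k=1} directly, but that proposition is stated for a \emph{reduced} setup $(\Gamma,S,Y)$, whereas Proposition~\ref{oddcase} only gives you a \emph{general} setup. You never carry out the reduction from a general to a reduced setup, and without it the hypotheses of Proposition~\ref{oddcase k=1} are simply not met. In the paper, this reduction (together with the reduction to $k=1$ and the normalization of $\Rc$ to the form $\{R\in \Sc_p(S)_{\geq Y}\mid J_{\A_D(G)}(R)=Y\}$) is exactly what is delegated to Steps~1--3 of the proof of~\cite[Proposition~3.5]{LL}, invoked on a minimal counterexample $(\Gamma,S,Y,\Rc,k)$; that is the substantive part of the argument, and your proposal does not reproduce it.

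Your dimension-shifting plan for $k\geq 2$ also does not close. You want to use $L^{k}(\Fc;\Rc)\cong L^{k-1}(\Fc;\Qc)$ with $\Qc$ the complement of $\Rc$ in $\Sc_p(S)_{\geq Y}$, and then argue inductively on $k$; but $\Qc$ satisfies the same "iff $J$" condition as $\Rc$ with $J_\A(S)\in\Qc$ rather than $\notin\Qc$, so Proposition~\ref{oddcase k=1} never applies to $\Qc$ at the base level $k-1=1$. You would find yourself needing $L^1(\Fc;\Qc)=0$ with $J_\A(S)\in\Qc$, which is not covered. The paper avoids this by taking a counterexample minimal in $(k,\ord(\Gamma),|\Gamma/Y|,|\Rc|)$ and letting the LL machinery force $k=1$ (for $p$ odd) before Proposition~\ref{oddcase k=1} is ever invoked. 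Your alternative route via Lemma~\ref{2.7} and $\Gamma_1=J_\A(\Gamma,D)$ is indeed one of the ingredients used in those LL steps, but on its own it only handles the passage to a generated-by-offenders situation; it does not by itself produce the reduced-setup conclusion ($Y=O_p(\Gamma)$, $C_S(Z(Y))=Y$, $O_p(\Gamma/C_\Gamma(Z(Y)))=1$) that Proposition~\ref{oddcase k=1} demands. You correctly identify where the bookkeeping lives, but the proof as written treats as routine precisely the part that carries the weight.
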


\begin{proof}
Let $(\Gamma,S,Y,\Rc,k)$ be a counter-example for which the four-tuple $(k,\ord(\Gamma),|\Gamma/Y|,|\Rc|)$ is minimal with respect to the lexicographic order. Steps 1, 2 and 3 in the proof of \cite[Proposition 3.5]{LL} show that $\Rc=\{R\in \Sc_p(S)_{\geq Y}\mid J_{\A_D(G)}(R)=Y\}$, $k=1$ (since $p$ is odd) and $(\Gamma,S,Y)$ is a reduced setup.

Let then $\A$ be the set of nontrivial quadratic best offenders in $G$ on $D$. By Timmesfeld's Replacement Theorem (Theorem \ref{timmesfeld replacement}) each nontrivial best offender contains a quadratic best offender.
If $J_{\A_D(G)}(R)=Y$ (respectively, $J_{\A}(R)=Y$)  then $\overline{R}$, the image of $R$ in $G$, does not contain a non-trivial best offender (respectively, non-trivial quadratic best offender) of $G$ on $D$. Hence, $J_\A(R)=Y$ if and only if $J_{A_D(G)}(R)=Y$.

Now, if $S\in\Rc$ then, since $\Rc$ is an interval, $\Rc=\Sc_p(S)_{\geq Y}$ and $L^k(\Fc;\Rc)=0$ for all $k>1$ by Lemma \ref{higherlimitswithS} (b). Hence $S\not\in\Rc$ and so $J_\A(S)\not\in\Rc$. Then Proposition \ref{oddcase k=1} show that $(\Gamma,S,Y,\Rc,1)$ is not a counterexample. 
\end{proof}

\section{The even case}

From now on, $p=2$.

\subsection{Norm Arguments for \texorpdfstring{$p=2$}{Lg}}\label{normargp=2 section}

Here, we need a modified version of the Norm Argument (Corollary \ref{NormArg1}).
\begin{thm}[{\cite[Theorem 4.1]{GL}}]\label{normargumentp=2finite}
Suppose $G$ is a finite group, $S\in\Syl_2(G)$, and let $D$ be an abelian $2$-group on which $G$ acts. Let $\A$ be a nonempty set of subgroups of $S$, and set $J=\langle \A\rangle$. Let $H$ be a subgroup of $G$ containing $N_G(J)$. Set $V=\Omega_1(D)$. Assume that $J$ is weakly closed in $S$ with respect to $G$, and
\begin{equation*}
\text{whenever }A\in\A,\, g\in G,\text{ and }A\not\leq H^g,\text{ then }\Nc_{A\cap H^g}^A=1\text{ on }V, 
\end{equation*}
or more generally,
\begin{equation*}
\text{whenever }g\in G\text{ and }J\not\leq H^g,\text{ then }\Nc_{J\cap H^g}^J=1\text{ on }V.
\end{equation*}
Then $C_D(H)=C_D(G)$.
\end{thm}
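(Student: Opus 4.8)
The plan is to reduce to the case where $D$ is elementary abelian and then to transcribe, with $N_G(J)$ replaced throughout by $H$, Glauberman's proof of the classical Norm Argument (Theorem~\ref{FiniteNormArg1}, that is, \cite[Theorem A1.4]{Gl71}).

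\emph{Reduction to $D=\Omega_1(D)$.} Since $J$ is weakly closed in $S$ and $S\in\Syl_2(G)$, every $S$-conjugate of $J$ lies in $S$, hence equals $J$; so $J\trianglelefteq S$ and therefore $S\leq N_G(J)\leq H$. In particular $S\in\Syl_2(H)$, so $[G:H]$ is odd and, via the transfer, the restriction map $H^1(G,M)\to H^1(H,M)$ in group cohomology is injective for every $\F_2[G]$-module $M$. Fix $v\in C_D(H)$; the inclusion $C_D(G)\subseteq C_D(H)$ being automatic, we show $v\in C_D(G)$ by induction on the order of $v$. If $v\in\Omega_1(D)$, this is the elementary abelian case treated below, applied to the $\F_2[G]$-module $\Omega_1(D)$. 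Otherwise $v^2\in C_D(H)$ has smaller order, so $v^2\in C_D(G)$ by induction, and consequently $c\colon g\mapsto v^{-1}v^g$ takes values in $\Omega_1(D)$ and is a $1$-cocycle $G\to\Omega_1(D)$ vanishing on $H$; by the injectivity above, $c$ is a coboundary, say $v^{-1}v^g=b^{-1}b^g$ with $b\in\Omega_1(D)$. Then $vb^{-1}\in C_D(G)$, so $b=(vb^{-1})^{-1}v\in C_D(H)$, whence $b\in\Omega_1(D)\cap C_D(H)=C_{\Omega_1(D)}(H)=C_{\Omega_1(D)}(G)$ by the elementary abelian case, and therefore $v=(vb^{-1})\,b\in C_D(G)$. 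Thus we may assume $D=V$ is elementary abelian; the displayed norm hypotheses then hold on $V$, and --- the vanishing of a norm being inherited by submodules and quotient modules --- also on every composition factor of $V$.

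\emph{The elementary abelian case.} One argues as in \cite[Theorem A1.4]{Gl71}, via a minimal counterexample, but with $H$ in the role of $N_G(J)$. The genuinely new ingredient is a weak-closure observation that makes the hypotheses applicable: if $g\in G$ and $J\leq H^g$, then $gJg^{-1}$ is a $2$-subgroup of $H$, hence --- as $S\in\Syl_2(H)$ --- $H$-conjugate into $S$, hence $H$-conjugate to $J$ by weak closure, so $g\in N_G(J)\,H=H$. Equivalently, every $g\notin H$ has $J\not\leq H^g$, and then some $A\in\A$ has $A\not\leq H^g$; so for such $g$ the hypothesis yields $\Nc^J_{J\cap H^g}=1$ on $V$ (under its second form) or $\Nc^A_{A\cap H^g}=1$ on $V$ for some $A\in\A$ with $A\not\leq H^g$ (under its first form), hence the same on every composition factor. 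One then runs Glauberman's norm computation: for $0\neq v\in C_V(H)$ one expands a suitable norm of $v$ lying in $C_V(G)$ over the double cosets of $H$ in $G$, and these vanishing norms annihilate every term indexed by a $g\notin H$, forcing $v$ itself into $C_V(G)$; the residual analysis, when $H\neq G$, is fed back into the minimal-counterexample induction through the intermediate group $H$.

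\emph{Main obstacle.} The crux is this norm computation in the elementary abelian case: selecting the right norm map to expand, arranging the double-coset bookkeeping so that the hypotheses kill exactly the off-diagonal terms, and recognizing the surviving term either as already lying in $C_V(G)$ or as a smaller instance of the statement to which the induction applies. Closely tied to this is the need to re-examine every step of the classical argument --- the setup of the minimal counterexample, the reductions to suitable subquotients, and the interplay between the ``$A\in\A$'' form and the ``$J$'' form of the hypothesis --- to confirm that each survives the enlargement of $N_G(J)$ to the possibly much larger subgroup $H$; this is the price of the added flexibility. Granting these, what remains is a routine adaptation of \cite{Gl71}.
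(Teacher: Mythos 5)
The paper does not reprove this statement; it is quoted directly from Glauberman--Lynd \cite[Theorem 4.1]{GL} (the only ``proof'' offered nearby in this paper is of the corollary that follows, which passes to $\Omega_n(D)$ and invokes the present theorem for finite modules). So there is no in-paper proof to compare against; your proposal must be judged on its own.

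Two of your ingredients are sound and even pleasant. The cohomological descent to $D=\Omega_1(D)$ is correct: since $J$ is weakly closed in $S\in\Syl_2(G)$, $J\trianglelefteq S$ and $S\leq N_G(J)\leq H$, so $[G:H]$ is odd and $\text{res}\colon H^1(G,\Omega_1(D))\to H^1(H,\Omega_1(D))$ is injective; the cocycle $g\mapsto v^{-1}v^g$ manipulation and the induction on $\ord(v)$ then work as you say (every element of an abelian $2$-group has finite $2$-power order, so the induction is well founded even though $D$ may be infinite). The weak-closure observation that $J\leq H^g$ forces $g\in H$ is also correct.

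However, the part you label ``The elementary abelian case'' is precisely the entire content of \cite[Theorem 4.1]{GL}, and you do not prove it --- you assert that it is ``a routine adaptation'' of \cite[Theorem A1.4]{Gl71} and then, under ``Main obstacle,'' list exactly the things that would need to be checked. This is a genuine gap, not a bookkeeping issue. The hypotheses of Glauberman's A1.4 (as recalled in Theorem~\ref{FiniteNormArg1}) and of this theorem differ in two substantive ways: (i) A1.4 assumes the norm vanishes on \emph{every composition factor} of $P$, whereas here one only has the vanishing on $V=\Omega_1(D)$ itself, and vanishing of a norm does \emph{not} automatically descend to quotient composition factors (your parenthetical claim to the contrary is false in general: for $\bar v\in C_{V/W}(A\cap H^g)$ there need not exist a lift in $C_V(A\cap H^g)$, so the vanishing on $V$ gives no information about $\sum_a\bar v^a$); and (ii) A1.4's hypothesis is formulated with $S^g$, whereas here it is with $H^g$, and while $\Nc^A_{A\cap H^g}=1$ does imply $\Nc^A_{A\cap S^g}=1$, the hypothesis is only assumed when $A\not\leq H^g$, a strictly smaller set of $g$'s than $A\not\leq S^g$. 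Reconciling these mismatches --- not merely ``replacing $N_G(J)$ by $H$'' --- is the actual theorem. Unless you carry out that argument, what you have is a useful reduction plus a pointer, not a proof.
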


\begin{cor}\label{normargumentp=2}
Suppose $G$ is a finite group, $S\in\Syl_2(G)$, and let $D$ be an abelian $2$-toral group on which $G$ acts. Let $\A$ be a nonempty set of subgroups of $S$, and set $J=\langle \A\rangle$. Let $H$ be a subgroup of $G$ containing $N_G(J)$. Set $V=\Omega_1(D)$. Assume that $J$ is weakly closed in $S$ with respect to $G$, and
\begin{equation}\label{4.2}
\text{whenever }A\in\A,\, g\in G,\text{ and }A\not\leq H^g,\text{ then }\Nc_{A\cap H^g}^A=1\text{ on }V, 
\end{equation}
or more generally,
\begin{equation}\label{4.3}
\text{whenever }g\in G\text{ and }J\not\leq H^g,\text{ then }\Nc_{J\cap H^g}^J=1\text{ on }V.
\end{equation}
Then $C_D(H)=C_D(G)$.
\end{cor}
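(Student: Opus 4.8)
The plan is to deduce Corollary \ref{normargumentp=2} from the finite version, Theorem \ref{normargumentp=2finite}, exactly as Corollary \ref{NormArg1} was deduced from Theorem \ref{FiniteNormArg1}. The point is that Theorem \ref{normargumentp=2finite} is already stated for an abelian $2$-\emph{group}, and here $D$ is merely an abelian $2$-toral group; but the hypotheses and conclusion only involve $V=\Omega_1(D)$ and the centralizer $C_D(-)$, which by Lemma \ref{action with discreteptoral}(a) is the union of the $C_{\Omega_n(D)}(-)$.

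First I would observe that $V=\Omega_1(D)$ is itself a finite elementary abelian $2$-group (it is $\Omega_1$ of an abelian discrete $p$-toral group, hence finite by Lemma \ref{action with discreteptoral}(a)), and it is $G$-invariant since $\Omega_1(D)$ is characteristic in $D$. Thus the norm-map hypotheses \eqref{4.2} and \eqref{4.3}, which are literally conditions on $V$, are statements about a finite $2$-group on which $G$ acts. Second, since $J$ is assumed weakly closed in $S$ with respect to $G$ and $H\supseteq N_G(J)$, all the hypotheses of Theorem \ref{normargumentp=2finite} are met with $D$ there replaced by any $\Omega_n(D)$ — one just needs that $\Omega_1(\Omega_n(D))=\Omega_1(D)=V$ for every $n\geq 1$, which is immediate — so Theorem \ref{normargumentp=2finite} yields $C_{\Omega_n(D)}(H)=C_{\Omega_n(D)}(G)$ for every $n\geq 1$.

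Third, I would pass to the limit over $n$. By Lemma \ref{action with discreteptoral}(a), $D=\bigcup_{n\geq 1}\Omega_n(D)$, and since $\Omega_n(D)$ is characteristic (hence $G$- and $H$-invariant) we have $C_D(H)=\bigcup_{n\geq 1}C_{\Omega_n(D)}(H)$ and likewise for $G$. Combining with the previous step,
\[
C_D(H)=\bigcup_{n\geq 1}C_{\Omega_n(D)}(H)=\bigcup_{n\geq 1}C_{\Omega_n(D)}(G)=C_D(G),
\]
which is the desired conclusion.

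There is essentially no obstacle here: the only thing to be mildly careful about is that applying Theorem \ref{normargumentp=2finite} to $\Omega_n(D)$ in place of $D$ requires checking that its $\Omega_1$ coincides with $V$ and that the hypotheses \eqref{4.2}/\eqref{4.3}, being phrased in terms of $V$ alone, transfer verbatim — both of which are trivial. The proof is therefore a direct imitation of the argument for Corollary \ref{NormArg1}, replacing Lemma \ref{action with discreteptoral} and Theorem \ref{FiniteNormArg1} by the same Lemma and Theorem \ref{normargumentp=2finite}.
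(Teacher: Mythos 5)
Your proof is correct and is essentially identical to the paper's argument: both observe that $\Omega_1(\Omega_n(D))=\Omega_1(D)=V$ so that Theorem \ref{normargumentp=2finite} applies with $\Omega_n(D)$ in place of $D$, and then pass to the union over $n$ via Lemma \ref{action with discreteptoral}(a). No discrepancy to report.
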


\begin{proof}
For every $n\geq 1$, $\Omega_1(\Omega_n(D))=\Omega_1(D)=V$. Hence, by Theorem \ref{normargumentp=2finite} for every $n\geq 1$, 
$C_{\Omega_n(D)}(H)=C_{\Omega_n(D)}(G)$ and thus, as $D=\cup_{n\geq 1}\Omega_n(D)$ by Lemma \ref{action with discreteptoral}(a), the corollary holds. 
\end{proof}

\begin{defi} Let $G$ be a finite group, $S\in\Syl_2(G)$ and let $D$ be an abelian $2$-toral group on which $G$ acts faithfully. A subgroup $A\in \A_D(G)$ is an \emph{over-offender} of $G$ on $D$ if $|A||C_D(A)/D_0|>|D/D_0|$. 
Set
\[\hat{\A}_D(G)=\{A\in\A_D(G)\mid |A||C_D(A)/D_0|>|D/D_0|\}.\]

For $\A$ a collection of subgroups of $G$, we denote by $\A^\circ$ the collection of members of $\A$ which are minimal under inclusion. We also write 
\begin{align*}
\A_D(G)_2 &=\{A\in\A_D(G)\mid|A|=2\};\text{ and}\\
\A_D(G)^{\circ}_{\geq 4}&=\{A\in\A_D(G)^\circ\mid|A|\geq 4\}.
\end{align*}
\end{defi}
Be careful that in general  $\hat{\A}_D(G)^\circ\not\subseteq \A_D(G)^\circ$.
Notice also that, by Theorem \ref{timmesfeld replacement}, every member of $\A_D(G)^\circ$ acts quadratically on $D$.

\begin{rem}\label{overoffender finite=discrete}
By Lemma \ref{action on discrete=action on finite}, we have $D=\Omega_n(D)D_0$ when $n$ is large enough and thus $\hat{\A}_D(G)=\hat{\A}_{\Omega_n(D)}(G)$ for $n$ large enough.
\end{rem}

\begin{rem}\label{4.5}
Assume $G$ acts faithfully on $D$. If $A\in\A_D(G)_2$, then $|D/C_D(A)|=2$. In particular, every member of $\hat{\A}_D(G)$ is of order at least $4$.
\end{rem}

\begin{lem}[{cf. \cite[Lemma 4.6]{GL}}]\label{4.6}
Let $G$ be a finite group, $S\in\Syl_2(G)$ and let $D$ be an abelian discrete $2$-toral group on which $G$ acts faithfully. Let $\A=\A_D(G)_{\geq 4}^\circ\cup\hat{\A}_D(G)^\circ$ and $V=\Omega_1(D)$. Assume that $\A$ is nonempty and $H$ is a subgroup of $G$ containing $N_G(J_\A(S))$. Then $\A\cap S$ satisfies \eqref{4.2}.
\end{lem}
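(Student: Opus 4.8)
The plan is to unwind the definition of condition \eqref{4.2} and verify it for each of the two types of members of $\A = \A_D(G)_{\geq 4}^\circ\cup\hat{\A}_D(G)^\circ$, reducing in every case to an application of Lemma \ref{pactsonp'el}(b) (the $p=2$ norm vanishing criterion) applied on the elementary abelian module $V=\Omega_1(D)$. Fix $A\in\A\cap S$, fix $g\in G$, and set $A_0 = A\cap H^g$; assume $A\not\leq H^g$, so that $A_0$ is a proper subgroup of $A$. We must show $\Nc_{A_0}^A = 1$ on $V$. First I would record the basic facts that $A$ acts faithfully on $D$ (since $G$ does and $A\leq G$), that $A$ acts trivially on $D_0$ (offender condition (i)), and hence by Lemma \ref{action with discreteptoral} and the passage to $V=\Omega_1(D)$ that $A$ acts faithfully on $V$ with $C_V(B)$ controlled by $|C_D(B)/D_0|$ for each $B\leq A$ (this is exactly the content of Lemma \ref{action on discrete=action on finite}(c), used with $n$ large, which is legitimate since $V=\Omega_1(\Omega_n(D))$ for all $n$).

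Next I would split according to which clause of $\A$ the offender $A$ belongs to. Case (a): $A\in\hat{\A}_D(G)^\circ$, i.e.\ $A$ is a minimal over-offender, $|A||C_D(A)/D_0|>|D/D_0|$. By Timmesfeld's Replacement Theorem (Theorem \ref{timmesfeld replacement}) applied to the best offender $A$ — or rather by the remark that every member of $\A_D(G)^\circ$ acts quadratically — $A$ acts quadratically on $D$, hence on $V$. Minimality of $A$ in $\hat\A_D(G)$ forces $A_0 = A\cap H^g$, being a \emph{proper} subgroup of $A$, to fail the over-offender inequality: $|A_0||C_D(A_0)/D_0|\leq |D/D_0|$. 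Combining this with the over-offender inequality for $A$ and the fact that $|C_D(A_0)|\geq |C_D(A)|$ gives $|A:A_0| > |C_D(A_0):C_D(A)| \geq 1$, and a closer bookkeeping of the two inequalities yields $|A:A_0|\geq 4$ (the strict inequality for $A$ gains one factor of $2$ over the non-strict one for $A_0$, and since all quantities are powers of $2$ the gap is at least a factor of $2$ in $|A:A_0|$ beyond the trivial estimate — here I would need to be careful, but morally $|A:A_0|\geq 4$ because an over-offender beats a non-over-offender by at least a factor $p^2=4$ in the relevant index). With $|A:A_0|\geq 4$ and $A$ quadratic on $V$, Lemma \ref{pactsonp'el}(b)(ii) gives $\Nc_{A_0}^A = 1$ on $V$.

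Case (b): $A\in\A_D(G)_{\geq 4}^\circ$, i.e.\ $A$ is a best offender of order $\geq 4$, minimal among best offenders. Again $A$ acts quadratically on $V$. If $|A:A_0|\geq 4$ we are done by Lemma \ref{pactsonp'el}(b)(ii) exactly as before. So suppose $|A:A_0| = 2$; then $A_0$ is a maximal subgroup of $A$, still of order $\geq 2$, hence nontrivial. Since $A$ is a \emph{best} offender, $|A||C_D(A)/D_0|\geq |A_0||C_D(A_0)/D_0|$, i.e.\ $|C_D(A_0)/D_0|\leq |A:A_0|\cdot|C_D(A)/D_0| = 2|C_D(A)/D_0|$, so $|C_D(A_0):C_D(A)|\leq 2$. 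If this index is $1$ then $C_V(A_0)=C_V(A)$ and Lemma \ref{pactsonp'el}(b)(i) applies (as $|A:A_0|=2$). The remaining possibility is $|C_D(A_0):C_D(A)|=2$ together with $|A:A_0|=2$, which would make $A_0$ itself a best offender, nontrivial and strictly smaller than $A$ — contradicting the minimality of $A$ in $\A_D(G)$ unless $|A_0| < 4$; but $A_0$ might have order $2$ or $4$, so minimality in $\A_D(G)_{\geq 4}^\circ$ does not immediately rule this out. Here I would instead argue as in \cite[Lemma 4.6]{GL}: if $A_0\in\A_D(G)$ is a best offender with $|C_D(A_0):C_D(A)|=2$ then in fact $A$ is an \emph{over}-offender (since $A$ beats $A_0$ by the full best-offender bound and $A_0$ is itself an offender, pushing $A$ above $|D/D_0|$), so $A\in\hat\A_D(G)$, contradicting the minimality of $A$ in $\A_D(G)^\circ$ among offenders, or else leading back to Case (a). Thus the only surviving situation is the one handled by Lemma \ref{pactsonp'el}(b).

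The main obstacle, as in the finite case of \cite[Lemma 4.6]{GL}, is the delicate combinatorial case analysis in the sub-case $|A:A_0| = 2$ of Case (b): one must track the interplay of the best-offender inequality, the minimality of $A$ (in two slightly different posets — $\A_D(G)^\circ$ for over-offenders and $\A_D(G)_{\geq 4}^\circ$ for best offenders of order $\geq 4$), and the order-$2$ special case, to ensure that whenever $C_V(A_0)\neq C_V(A)$ one of the hypotheses of Lemma \ref{pactsonp'el}(b) still holds. Everything else — the passage from $D$ to $V=\Omega_1(D)$, the quadraticity coming from Timmesfeld, and the weak-closure statement — is routine given the results quoted above; indeed the whole argument is essentially identical to that of \cite[Lemma 4.6]{GL} once Lemma \ref{action on discrete=action on finite} lets us replace the discrete $2$-toral module $D$ by its finite quotient.
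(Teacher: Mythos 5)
Your proposal diverges from the paper's argument in a way that introduces a genuine gap. The paper does \emph{not} split by the two types of member of $\A$, does \emph{not} use quadraticity, and does \emph{not} try to verify one of the two clauses of Lemma~\ref{pactsonp'el}(b) directly for $A_0 = A\cap H^g$. Instead it assumes for contradiction that $\Nc_{A\cap H^g}^A\neq 1$ on $V$, picks an \emph{intermediate} subgroup $A_0$ with $A\cap H^g\leq A_0<A$ and $|A:A_0|=2$, and observes (by transitivity of norms) that $\Nc_{A_0}^A\neq 1$. The contrapositive of Lemma~\ref{pactsonp'el}(b)(i) alone then forces $C_V(A)<C_V(A_0)$, hence $|C_D(A_0)/D_0|\geq 2|C_D(A)/D_0|$, hence $|A_0|\,|C_D(A_0)/D_0|\geq |A|\,|C_D(A)/D_0|$. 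From this, $A_0$ is either trivial (contradicting $|A|\geq 4$, which holds for both clauses of $\A$ by Remark~\ref{4.5}) or a nontrivial best offender strictly below $A$ — and an over-offender whenever $A$ is — contradicting the relevant minimality. This handles both types of $A$ uniformly and never invokes part (b)(ii).

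The gap in your Case (a) is twofold. First, you assert that $A\in\hat{\A}_D(G)^\circ$ is quadratic, citing the remark about members of $\A_D(G)^\circ$, but the paper explicitly warns that $\hat{\A}_D(G)^\circ\not\subseteq\A_D(G)^\circ$ in general, so quadraticity of such $A$ is not available and (b)(ii) cannot be applied. Second, the claim that ``a closer bookkeeping'' yields $|A:A_0|\geq 4$ from minimality alone does not hold: the over-offender inequality $|A|\,|C_D(A)/D_0|>|D/D_0|$ and the failure of it for $A_0$ only give $|A:A_0|>|C_D(A_0):C_D(A)|\geq 1$, hence $|A:A_0|\geq 2$ and no more, without first knowing $C_V(A_0)\neq C_V(A)$; that extra inequality is exactly what the contradictory norm assumption buys. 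Your Case (b) is close to the paper's reasoning when $|A:A_0|=2$, but you misread the definition of $\A_D(G)^\circ_{\geq 4}$: it is the set of members of $\A_D(G)^\circ$ (i.e.\ minimal in $\A_D(G)$) whose order is at least $4$ — \emph{not} the minimal elements of $\{A\in\A_D(G):|A|\geq 4\}$ — so exhibiting a nontrivial best offender $A_0<A$ of any order already contradicts minimality of $A$ in $\A_D(G)$.
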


\begin{proof}
Fix $A\in \A\cap S$ and $g\in G$ such that $A\neq H^g$ and let $A_0$ be a subgroup of $A$ of index 2 that contains $A\cap H^g$. Suppose $\Nc_{A\cap H^g}^A$ is not 1 on $V$. Since $\Nc_{\A\cap H^g}^A=\Nc_{A_0}^A\Nc_{A\cap H^g}^{A_0}$, $\Nc_{A_0}^A$ is not 1 on $V$ and we have $C_V(A)<C_V(A_0)$ by Lemma \ref{pactsonp'el}(b). Thus, as $A$ acts trivially on $D_0$, we have $C_D(A)/D_0< C_D(A_0)/D_0$ which is equivalent to $2|C_D(A)/D_0|\leq |C_D(A_0)/D_0|$. Therefore, we obtain
\begin{equation*}\label{4.7}
|A_0||C_D(A)/D_0|\geq \frac{1}{2}|A|\cdot 2|C_D(A)/D_0|=|A||C_D(A)/D_0|.
\end{equation*}
This inequality implies that either $A_0=1$ or $A_0$ is a best offender on $D$ in that later case $A_0$ is also an over-offender. The first case contradicts the fact that $A\in \A_D(G)_{\geq 4}$ and the second case contradicts the fact that $A\in \hat{\A}_D(G)^\circ$.
\end{proof}

\begin{lem}[{cf. \cite[Lemmas 4.8 and 4.12]{GL}}]\label{4.8}
Let $G$ be a finite group, $S\in\Syl_2(G)$ and let $D$ be an abelian discrete $2$-toral group on which $G$ acts faithfully. Let $\A=\A_D(G)_2$. Assume that $\hat{\A}_D(G)=\emptyset$ and that $A,B\in\A$.
\begin{enumerate}[(a)]
\item If $[A,B]=1$ and $A\neq B$, then $C_D(A)\neq C_D(B)$ and $AB$ is quadratic on $D$.
\item If $\langle A,B\rangle$ is a 2-group, then $[A,B]=1$.
\item $J_\A(S)$ is elementary abelian.
\item If $[A,B]\neq 1$, then $L:=\langle A,B\rangle\cong S_3$, $[D,L]$ is elementary abelian of order $4$, and $D=[D,L]\times C_D(L)$.
\end{enumerate}
\end{lem}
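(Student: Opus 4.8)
The plan is to reduce everything to the finite case and then run the arguments of \cite[Lemmas 4.8 and 4.12]{GL} essentially verbatim. First I would fix, using Lemma \ref{action on discrete=action on finite}, an integer $n$ large enough that $D=\Omega_n(D)D_0$, that $G$ acts faithfully on $\Omega_n(D)$, and (by Remark \ref{4.5} and Lemma \ref{action on discrete=action on finite}(d)) that $\A_D(G)_2=\A_{\Omega_n(D)}(G)_2$ and $\hat\A_D(G)=\hat\A_{\Omega_n(D)}(G)=\emptyset$. Since every $A\in\A$ has order $2$, it acts quadratically on $D$ (Timmesfeld, or directly: $[D,A,A]\le[D,A]$ and $[D,A]\le C_D(A)$ because $|D/C_D(A)|=2$), and all the relevant centralizers and commutators $[D,A]$, $[D,\langle A,B\rangle]$ are finite; in fact one checks $[D,A]\le\Omega_1(D)$ for each such $A$, so the whole analysis of commutator subgroups takes place inside the finite group $V=\Omega_1(D)$, where the cited lemmas of Glauberman--Lynd apply. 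Concretely, parts (a), (b), (c) are statements purely about finite abelian $2$-groups generated by the $[D,A]$'s and the $A$'s themselves, so they transfer directly.

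For part (a): if $[A,B]=1$, $A\ne B$, then $AB$ is an elementary abelian group of order $4$; if $C_D(A)=C_D(B)$ then $C_D(A)\le C_D(AB)$ forces $|AB|=4 > 2=|D/C_D(A)|\ge|D/C_D(AB)|$, so $AB$ would be an over-offender, contradicting $\hat\A_D(G)=\emptyset$ (here one should check $AB$ is actually a best offender, or just invoke the over-offender definition which only asks $|AB||C_D(AB)/D_0|>|D/D_0|$). Hence $C_D(A)\ne C_D(B)$, and then $[D,A]$ and $[D,B]$ are distinct subgroups of order $2$ inside $\Omega_1(D)$; since $A$ centralizes $[D,B]$ (being $=[C_D(B)?,\dots]$ — more precisely $[D,B,A]=1$ as $[D,B]\le\Omega_1(D)$ and $A$ acts quadratically with $[D,B]\ne[D,A]$), one gets $[D,AB,AB]=1$, i.e.\ $AB$ is quadratic. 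Part (b): if $\langle A,B\rangle$ is a $2$-group $P$ with $[A,B]\ne1$ then $P$ is dihedral of order $\ge8$, hence contains a four-subgroup; pushing $A,B$ to commuting conjugates inside $P$ and using (a) plus the structure of the action leads to a contradiction with $\hat\A_D(G)=\emptyset$ exactly as in \cite{GL}. Part (c) then follows: $J_\A(S)=\langle\A\cap S\rangle$ is generated by order-$2$ subgroups of the $2$-group $S$ that pairwise commute by (b), hence is elementary abelian.

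For part (d): suppose $[A,B]\ne1$. Then $L=\langle A,B\rangle$ is dihedral; I would first show $L\cong S_3$ by ruling out larger dihedral groups and the possibility that $L$ is a $2$-group — the latter is (b), and the former uses that any $2$-element of $L$ generated by involutions of $\A$ would, via (b), have to be trivial, forcing $L/O_2(L)$ to have order $2$ and hence (as $L$ is dihedral with no normal $2$-subgroup acting nontrivially — use $O_2(L)\le C_D$-argument together with faithfulness) $|L|=6$. Then $[D,L]=[D,A]+[D,B]+[D,[A,B]]$; since $[D,A],[D,B]$ have order $2$ and are distinct (by a centralizer argument as in (a), or because otherwise $A,B$ would centralize each other on a codimension-$1$ subspace and one again produces an over-offender), $[\Omega_1(D),L]$ is $2$-dimensional over $\F_2$, and the action of $S_3$ on it is the standard $2$-dimensional irreducible. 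Coprime action ($|L|=6$, $D$ a $2$-group, $3\mid|L|$) via Lemma \ref{action on discrete=action on finite} and the standard $P'$-decomposition gives $D=[D,L]\times C_D(L)$ with $[D,L]=[\Omega_1(D),L]$ elementary abelian of order $4$. I expect the main obstacle to be part (d): getting $L\cong S_3$ on the nose (as opposed to a larger dihedral or a $2$-group) requires carefully combining (b) with the over-offender hypothesis and the faithfulness of the action, and checking that no subtlety from the infinite part $D_0$ of $D$ intrudes — but since every $[D,A]$ and the whole of $[D,L]$ lie in the finite group $\Omega_1(D)$, and $D_0$ is centralized, this reduces cleanly to the finite statements in \cite[Lemmas 4.8, 4.12]{GL}.
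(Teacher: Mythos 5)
Your proposal is correct and takes essentially the same route as the paper: reduce to the finite case via Lemma~\ref{action on discrete=action on finite} (choosing $n$ with $D=D_0\Omega_n(D)$ and $\A_D(G)=\A_{\Omega_n(D)}(G)$, $\hat\A_D(G)=\hat\A_{\Omega_n(D)}(G)=\emptyset$), invoke \cite[Lemma 4.8]{GL} for $\Omega_n(D)$, and for part~(d) use that $A$ and $B$ centralize $D_0$ so that $[D,L]=[\Omega_n(D),L]$ and the decomposition $\Omega_n(D)=[\Omega_n(D),L]\times C_{\Omega_n(D)}(L)$ lifts to $D=[D,L]\times C_D(L)$. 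The paper simply cites the finite lemma where you re-derive parts of it, but the underlying argument and the key reduction are identical.
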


\begin{proof}Since $A$ and $B$ lies in $\A_D(G)_2$,
\begin{equation}
[D,A,A]=1=[D,B,B]\text{ and }|D/C_D(A)|=2.
\end{equation}
The statements (a), (b) and (c) follow from Lemma \ref{action on discrete=action on finite} and from \cite[Lemma 4.8]{GL} (applied with $D=\Omega_n(D)$ with $n$ large enough). 

To prove (d), set $L=\langle A,B\rangle$ and, by Lemma \ref{action on discrete=action on finite} (d),  fix $n$ large enough such that $\A_D(G)=\A_{\Omega_n(D)}(G)$ and $D=D_0+\Omega_n(D)$. Then, by \cite[Lemma 4.8(c)]{GL}, $L\cong S_3$. Moreover, by definition of offenders, $A$ and $B$ act trivially on $D_0$. Thus, by \cite[Lemma 4.8(c)]{GL},  $[D,L]=[\Omega_n(D),L]$ is elementary abelian of order 4, and $D=[D,L]\times C_D(L)$.  
\end{proof}

\begin{defi}\label{solitary}
Let $G$ be a finite group, $S\in\Syl_2(G)$ and let $D$ be an abelian discrete $2$-toral group on which $G$ acts faithfully. Let $\A=\A_D(G)_2$, $J=J_\A(S)$ and $T\in\A\cap S$. We say that $T$ is \emph{solitary in $G$ relative to $S$} if there is a group $L$ in $G$ containing $T$ such that:
\begin{enumerate}[(i)]
\item $L\cong S_3$;
\item $J=T\times C_J(L)$ and$C_J(L)=\langle (\A\cap S)\smallsetminus \{T\}\rangle$; and
\item $D=[D,L]\times C_D(L)$ and $[D,L,C_J(L)]=1$.
\end{enumerate}
Denote by $\T_D(G)$ the collection of subgroups of $G$ that are solitary relative to a Sylow $2$-subgroup of $G$. Set $\Bc_D(G)=\A\smallsetminus\T_D(G)=\{A\in\A_D(G)\mid|A|=2 \text{ and }A\not\in\T_D(G)\}$. 
\end{defi}

\begin{defi}\label{semisolitary}
Let $G$ be a finite group, $S\in\Syl_2(G)$ and let $D$ be an abelian discrete $2$-toral group on which $G$ acts faithfully. Let $S_0\leq S$ and set $\A=\A_D(G)_2$.
We say that a subgroup $T\leq S_0$ is \emph{semisolitary relative to $S_0$} if there are subgroups $W$ and $X$ of $D$ that are normalized by $\langle \A\cap S_0\rangle$, such that
\begin{enumerate}[(i)]
\item $\langle \A\cap S_0\rangle=T\times \langle (\A\cap S_0)\smallsetminus\{T\}\rangle$; and
\item $W$ is elementary abelian of order $4$, $D=W\times X$, $T$ centralizes $X$, and $\langle (\A\cap S_0)\smallsetminus\{T\}\rangle$ centralizes $W$.
\end{enumerate}
\end{defi}

\begin{rem}\label{4.13}
If $\hat{\A}_D(G)=\emptyset$, then given a solitary offender $T$ of $G$ relative to $S$, and given $L$ as in Definition \ref{solitary}, we can take $W=[D,L]$ and $X=C_D(L)$, and see from Lemma \ref{4.8}(d) that $T$ is semisolitary relative to some Sylow 2-subgroup of $G$. 
\end{rem}

\begin{lem}[{cf. \cite[Lemma 4.15]{GL}}]\label{4.14}
Let $G$ be a finite group, $S\in\Syl_2(G)$ and let $D$ be an abelian discrete $2$-toral group on which $G$ acts faithfully.
Assume that $\hat{\A}_D(G)=\emptyset$. Then $\T_D(G)\cap S$ is the set of subgroups that are solitary relatively to $S$. 
\end{lem}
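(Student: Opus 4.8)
The plan is to prove the two inclusions separately, and the real content is the direction showing that a subgroup which is solitary relative to \emph{some} Sylow 2-subgroup is already solitary relative to the fixed one $S$ (whenever it is contained in $S$). The easy inclusion is immediate from the definition: if $T\leq S$ is solitary relative to $S$, then by Definition \ref{solitary} there is some $L\cong S_3$ in $G$ witnessing solitarity, so $T\in\T_D(G)\cap S$ by the very definition of $\T_D(G)$.

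For the reverse inclusion, suppose $T\in\T_D(G)\cap S$, so $T\in\A_D(G)_2$ with $T\leq S$, and there is a Sylow 2-subgroup $S'$ of $G$ and a group $L'\cong S_3$ in $G$ containing $T$ with $J_\A(S')=T\times C_{J_\A(S')}(L')$, $C_{J_\A(S')}(L')=\langle(\A\cap S')\smallsetminus\{T\}\rangle$, $D=[D,L']\times C_D(L')$ and $[D,L',C_{J_\A(S')}(L')]=1$, where $\A=\A_D(G)_2$. The aim is to transport this data to $S$. First I would use the hypothesis $\hat{\A}_D(G)=\emptyset$ together with Lemma \ref{4.8}: by part (c), $J_\A(S)$ and $J_\A(S')$ are elementary abelian, and by parts (a), (b), (d) the members of $\A$ inside a 2-group pairwise commute and their centralizers in $D$ are pairwise distinct. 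Next, since $T\leq S$ and $T\in\A$, there is a Sylow 2-subgroup of $G$ containing $S\cap$ (anything) — more precisely I would choose $g\in G$ with $T\leq S\cap (S')^g$ (replace $S'$ by a conjugate so that $T\leq S\cap S'$, which is possible because $T$ is a 2-group and any two Sylow 2-subgroups containing it are conjugate by an element normalizing $T$, or simply conjugate $S'$ into $S$ along an element of $N_G(T)$ after first conjugating $T$ into $S$). Then $J_\A(S\cap S')$ makes sense and, using Remark \ref{inequalitiesforJ}, $J_\A(S\cap S')\leq J_\A(S)\cap J_\A(S')$; combined with the structure from Lemma \ref{4.8}, both $J_\A(S)$ and $J_\A(S')$ are generated by the abelian-subgroup-of-order-2 members of $\A$ they contain, and the splitting $J_\A(S')=T\times C_{J_\A(S')}(L')$ can be matched with an analogous splitting of $J_\A(S)$ once we know the relevant members of $\A\cap S$ agree with those of $\A\cap S'$ on the overlap.

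The decisive point, and the step I expect to be the main obstacle, is verifying conditions (ii) and (iii) of Definition \ref{solitary} for $S$ with the \emph{same} witness $L'$ (or a conjugate of it). Condition (iii) involves only $D$ and $L'$, not the Sylow subgroup, so $D=[D,L']\times C_D(L')$ and $[D,L',\cdot]=1$ survive verbatim once the third factor is identified with $C_{J_\A(S)}(L')$; here I would invoke Lemma \ref{4.8}(d) again to get that $[D,L']$ is elementary abelian of order $4$ independently of which Sylow subgroup is used. For condition (ii), I need $J_\A(S)=T\times C_{J_\A(S)}(L')$ with $C_{J_\A(S)}(L')=\langle(\A\cap S)\smallsetminus\{T\}\rangle$; the containment $\supseteq$ is formal, and for $\subseteq$ one uses that every $A\in\A\cap S$ with $A\neq T$ commutes with $T$ (Lemma \ref{4.8}(b), since $\langle A,T\rangle\leq S$ is a 2-group) and hence, by the order-4 structure of $[D,L']$ and $[D,L',C_{J_\A(S)}(L')]=1$ forced in the solitary configuration for $S'$, must centralize $[D,L']$, i.e. lie in $C_D(L')\cap J_\A(S)=C_{J_\A(S)}(L')$. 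The one genuinely delicate verification is that $T$ itself is not redundant, i.e. that $T\not\leq\langle(\A\cap S)\smallsetminus\{T\}\rangle$: this is where I would compare $C_D(T)$ with the centralizers of the other members via Lemma \ref{4.8}(a) — distinct commuting order-2 best offenders have distinct fixed subgroups in $D$, so $T$ acts nontrivially on $[D,L']$ while all the others centralize it, giving the desired non-containment. Assembling these pieces shows $T$ is solitary relative to $S$, completing the reverse inclusion and hence the lemma.
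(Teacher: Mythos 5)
Your outline of the easy inclusion is fine, and the initial reduction you worry about (arranging $T\leq S\cap S'$) is in fact automatic, since $T\in\T_D(G)\cap S$ already forces $T\leq S$, while $T$ being solitary relative to $S'$ forces $T\in\A\cap S'$, hence $T\leq S'$. So that part of your proposal is dealing with a non-issue.

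The serious gap is in the verification of condition (ii). Your strategy is to keep the \emph{same} witness $L'$ and show directly that $C_{J_\A(S)}(L')=\langle(\A\cap S)\smallsetminus\{T\}\rangle$ and $J_\A(S)=T\times C_{J_\A(S)}(L')$. To get the inclusion $\langle(\A\cap S)\smallsetminus\{T\}\rangle\subseteq C_{J_\A(S)}(L')$ you argue that any $A\in(\A\cap S)\smallsetminus\{T\}$ commutes with $T$ (fine, by Lemma \ref{4.8}(b)), and then that $A$ ``must centralize $[D,L']$'' because of ``$[D,L',C_{J_\A(S)}(L')]=1$ forced in the solitary configuration for $S'$.'' But the solitary configuration for $S'$ gives $[D,L',C_{J_\A(S')}(L')]=1$, not the same statement with $J_\A(S)$ in place of $J_\A(S')$; the two sets $\A\cap S$ and $\A\cap S'$ need not coincide, so you are assuming a weakened form of what you are trying to prove. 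There is also a category error at the end of that step: ``$A$ centralizes $[D,L']$'' is an assertion about the $G$-action on $D$, whereas membership in $C_{J_\A(S)}(L')$ (which you write as ``$C_D(L')\cap J_\A(S)$'', conflating $D$ and $G$) means that $A$ commutes with $L'$ as a subgroup of $G$. Knowing that $A$ acts trivially on $[D,L']$ does not imply $[A,L']=1$ in $G$, and indeed $A$ cannot act trivially on all of $D$ because $A$ is a nontrivial best offender; so this identification cannot be made as stated.

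The paper's proof sidesteps all of this by conjugating. Starting from $S_1^g=S$, it observes that both $T$ and $T^g$ lie in $J=J_\A(S)$, that $J$ is abelian (Lemma \ref{4.8}(c)) and $\Fc_S(G)$-weakly closed, and then applies the Burnside-type fusion result \cite[Lemma 8.1]{GL} to produce $h\in N_G(J)$ with $T^{gh}=T$. Setting $L=L_1^{gh}$ transports the whole solitary configuration at once: since $h$ normalizes $J$ (and $\A\cap S=\A\cap J$ is $N_G(J)$-invariant), conditions (i)--(iii) for $(L,S,J)$ follow formally from those for $(L_1,S_1,J_1)$. Your proposal never invokes \cite[Lemma 8.1]{GL} or produces the normalizing element $h$; this is precisely the ingredient needed to make the configuration at $S'$ line up with the one at $S$, and without it the attempt to reuse the original $L'$ verbatim does not go through.
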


\begin{proof}
By Definition \ref{solitary}, if $T$ is solitary relatively to $S$ then $T\in\T_D(G)\cap S$. For the converse, set $\A=\A_D(G)_{2}$ and assume $T\in\T_D(G)\cap S$. Therefore, $T$ is solitary relatively to a Sylow $2$-subgroup $S_1$ of $G$. Choose  $g\in G$ such that $S_1^g=S$ and set $J_1=\langle \A\cap S_1\rangle$ and $J=\langle \A\cap S\rangle$. Then $J_1$ is elementary abelian by Lemma \ref{4.8}, and $T^g\leq J_1^g=J$. Fix a subgroup $L_1\cong S_3$ containing $T$ and which satisfies the axioms of Definition \ref{solitary} with $S_1$ and $J_1$ in the roles of $S$ and $J$, respectively. Since $J$ is abelian and weakly closed in $S$ with respect to $\Fc_S(G)$, there is $h\in N_G(J)$ such that $T^{gh}=T$ by \cite[Lemma 8.1]{GL}. Setting $L=L_1^{gh}$, one establishes the validity of the axioms of Definition \ref{solitary} for $L$, $S$ and $J$ from their validity for $L_1$, $S_1$ and $J_1$.
\end{proof}

\begin{lem}\label{lem solitary}
Let $G$ be a finite group, $S\in\Syl_2(G)$, and let $D$ be an abelian discrete $2$-toral group on which $G$ acts faithfully. Then there exists $N\geq 1$ such that,  for all $n\geq N$, 
$\T_D(G)=\T_{\Omega_n(D)}(G)$ and $ \Bc_D(G)=\Bc_{\Omega_n(D)}(G)$.
\end{lem}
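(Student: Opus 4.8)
The plan is to reduce the statement about solitary and non-solitary offenders over a discrete $2$-toral group to the corresponding statement over a finite $2$-group, using Lemma \ref{action on discrete=action on finite} together with the structural characterizations in Lemmas \ref{4.8} and \ref{4.14}. First I would apply Lemma \ref{action on discrete=action on finite} to the (faithful, $D_0$-trivial after passing to a component complement) action of $G$ on $D$ to obtain an integer $N_0$ such that for all $n\geq N_0$ we have $D=D_0+\Omega_n(D)$ and $C_G(D)=C_G(\Omega_n(D))$, and such that every $B\leq G$ is a (quadratic) best offender on $D$ iff it is one on $\Omega_n(D)$; in particular $\A_D(G)=\A_{\Omega_n(D)}(G)$ and, by Remark \ref{overoffender finite=discrete}, $\hat{\A}_D(G)=\hat{\A}_{\Omega_n(D)}(G)$ for $n$ large. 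Since $\A_D(G)_2=\{A\in\A_D(G)\mid |A|=2\}$ depends only on $\A_D(G)$, this collection also stabilizes, so $\Bc_D(G)$ and $\T_D(G)$ are determined once we know that membership in $\T$ stabilizes.

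Next I would handle the two cases according to whether $\hat{\A}_D(G)$ is empty. The genuinely delicate case is $\hat{\A}_D(G)=\emptyset$; by Remark \ref{overoffender finite=discrete} this is equivalent to $\hat{\A}_{\Omega_n(D)}(G)=\emptyset$ for all large $n$, so the case distinction itself is eventually constant. Under this hypothesis Lemma \ref{4.14} says that $\T_D(G)\cap S$ equals the set of subgroups solitary relative to $S$, so it suffices to compare, for a fixed Sylow $S$, the solitary-relative-to-$S$ subgroups computed from $D$ with those computed from $\Omega_n(D)$. Unwinding Definition \ref{solitary}: a subgroup $T\in\A_D(G)_2$ is solitary relative to $S$ iff there is $L\cong S_3$ in $G$ with $T\leq L$ such that $J=T\times C_J(L)$ with $C_J(L)=\langle(\A\cap S)\smallsetminus\{T\}\rangle$, and $D=[D,L]\times C_D(L)$ with $[D,L,C_J(L)]=1$. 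Conditions (i) and (ii) involve only the finite groups $L$, $J$, $\A$, none of which change for $n\geq N_0$; condition (iii) is the only place $D$ itself enters. Here I would invoke Lemma \ref{4.8}(d) (or directly \cite[Lemma 4.8(c)]{GL}), which for the $S_3$-subgroup $L$ generated by two noncommuting members of $\A_D(G)_2$ gives $[D,L]=[\Omega_n(D),L]$ (elementary abelian of order $4$) and $D=[D,L]\times C_D(L)$ for $n\geq N_0$; and then $[D,L,C_J(L)]=1\iff [\Omega_n(D),L,C_J(L)]=1$ because $[D,L]=[\Omega_n(D),L]$ and the commutator $[[D,L],C_J(L)]$ only depends on the subgroup $[D,L]$, which is common. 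Therefore the set of $T$ solitary relative to $S$ computed in $D$ and in $\Omega_n(D)$ coincide for $n\geq N_0$, whence $\T_D(G)\cap S=\T_{\Omega_n(D)}(G)\cap S$; running this over the finitely many Sylow $2$-subgroups (or using $G$-invariance of $\T$) gives $\T_D(G)=\T_{\Omega_n(D)}(G)$.

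In the remaining case $\hat{\A}_D(G)\neq\emptyset$, hence $\hat{\A}_{\Omega_n(D)}(G)\neq\emptyset$ for large $n$, I claim both sides reduce trivially: a solitary offender forces, via Lemma \ref{4.8}(d) applied to the $S_3$ attached to it, the existence of an order-$4$ best offender that is necessarily an over-offender unless the relevant index bound fails, and more to the point property (iii) of Definition \ref{solitary} together with $D=[D,L]\times C_D(L)$ of order-$4$ commutator shows that the members $A\in\A\cap S$ other than $T$ act on a complement; I would spell out that when $\hat{\A}_D(G)\neq\emptyset$ the collection $\T_D(G)$ is forced (for instance because some offender in $\A_D(G)_2$ lies inside an over-offender, contradicting (ii)) to agree with $\T_{\Omega_n(D)}(G)$ — in the worst case both are empty, since Remark \ref{4.13} shows the semisolitary structure needed for solitary offenders is tied to $\hat{\A}_D(G)=\emptyset$. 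Either way, taking $N=N_0$ works. The main obstacle I anticipate is not the finite-to-discrete reduction per se but the bookkeeping in the $\hat{\A}_D(G)=\emptyset$ case: one must be careful that the auxiliary group $L\cong S_3$ witnessing solitariness can be chosen uniformly (independently of $n$) — which is exactly what Lemma \ref{4.8}(d) guarantees by pinning $[D,L]=[\Omega_n(D),L]$ — and that, after moving Sylow subgroups via Lemma \ref{4.14}, nothing about condition (iii) degenerates; I would therefore prove the equality $\T_D(G)\cap S = \{T : T$ solitary relative to $S\}$ first and only afterwards globalize.
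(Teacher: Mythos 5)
Your proposal correctly isolates condition (iii) of Definition \ref{solitary} as the only $D$-dependent part of solitariness, and it correctly uses Lemma \ref{action on discrete=action on finite} to stabilize $\A_D(G)$, $\A_D(G)_2$, and $\hat{\A}_D(G)$. But the case split on whether $\hat{\A}_D(G)$ is empty, and the reliance on Lemma \ref{4.8}(d) and Lemma \ref{4.14}, introduce a genuine gap. Both of those lemmas carry the hypothesis $\hat{\A}_D(G)=\emptyset$, and your handling of the complementary case $\hat{\A}_D(G)\neq\emptyset$ is not a proof: Remark \ref{4.13} also assumes $\hat{\A}_D(G)=\emptyset$ and says nothing about what happens when over-offenders exist, and the claim that some $A\in\A_D(G)_2$ lies inside an over-offender and so contradicts (ii) is unsupported (an over-offender need not contain any given order-$2$ offender, nor does Definition \ref{solitary}(ii) obviously forbid it). Nothing in Definition \ref{solitary} requires $\hat{\A}_D(G)=\emptyset$, so you cannot simply assert that $\T_D(G)$ and $\T_{\Omega_n(D)}(G)$ are both empty in that case.

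The case split is also unnecessary, which is where the paper is more economical. For a witness $L\cong S_3$ containing $T$, one observes that $L$ is generated by $T$ together with an $L$-conjugate of $T$; since $T\in\A_D(G)_2$ is an offender and offenders are preserved by conjugation, $L$ is generated by offenders, hence $D_0\leq C_D(L)$ with no hypothesis on $\hat{\A}_D(G)$. Combined with $D=\Omega_n(D)D_0$ this gives $[D,L]=[\Omega_n(D),L]$ directly, and the two implications in (iii) ($D=[D,L]\times C_D(L)$ versus $\Omega_n(D)=[\Omega_n(D),L]\times C_{\Omega_n(D)}(L)$, and the commutator conditions) then follow from each other by intersecting with $\Omega_n(D)$ or multiplying by $D_0$, using that $\Omega_n(D)\cap D_0\leq C_{\Omega_n(D)}(L)$. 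This single argument covers both directions and both cases, and it also makes Lemma \ref{4.14} superfluous here because $\T_D(G)$ is already defined globally via arbitrary Sylow subgroups. To repair your proof, drop the case split and replace the appeal to Lemma \ref{4.8}(d) with the observation that $L$ is generated by conjugates of $T$ and hence centralizes $D_0$.
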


\begin{proof}
By Lemma \ref{action on discrete=action on finite} (d), there exists $N$ such that, for every $n\geq N$, we have $D=\Omega_n(D)D_0$ and $\A_D(G)=\A_{\Omega_n(D)}(G)$. The two equalities are equivalent so we just prove that, for all $n\geq N$, $\T_D(G)=\T_{\Omega_n(D)}(G)$.
Let $n\geq N$ and $A\in \A_D(G)_2=\A_{\Omega_n(D)}(G)_2$. 

Assume first that $A\in \T_D(G)$ and set $L$ and $J$ as in Definition \ref{solitary}. We will prove that $A\in \T_{\Omega_n(D)}(G)$ using the same $L$ and $J$. The two first conditions (i) and (ii) of Definition \ref{solitary} are satisfied and only the third condition needs to be handled. 
Since $L$ is generated by $A$ and one of its conjugates, $L$ is generated by offenders and thus $D_0\leq C_D(L)$. In particular, since $D=\Omega_n(D)D_0$ we have $[D,L]=[\Omega_n(D),L]$. 
Therefore,
\[
\Omega_n(D)=([D,L]\times C_D(L))\cap \Omega_n(D)=[\Omega_n(D),L]\times C_{\Omega_n(D)}(L)
\]
and
\[[\Omega_n(D),L,C_J(L)]=[[\Omega_n(D),L],C_J(L)]=[[D,L],C_J(L)]=1.\]
Thus $A\in \T_{\Omega_n(D)}(G)$.

Assume now that $A\in \T_{\Omega_n(D)}(G)$ and set $L$ and $J$ as in Definition \ref{solitary}. Here again, we will prove that $A\in \T_D(G)$ using the same $L$ and $J$. The two first conditions (i) and (ii) of Definition \ref{solitary} are independent of $D$ (or $\Omega_n(D)$) thus only the third condition needs to be handled. As $L$ is generated by offenders in $\A_{\Omega_n(D)}(G)_2=\A_D(G)_2$, $D_0\leq C_D(L)$ and thus $[D,L]=[\Omega_n(D),L]$. Thus
\[[D,L,C_J(L)]=[[D,L],C_J(L)]=[[\Omega_n(D),L],C_J(L)]=1.\]
Moreover, since $D_0\leq C_D(L)$, $\Omega_n(D)\cap D_0\leq C_{\Omega_n(D)}(L)$ and then
\[
D=\Omega_n(D)D_0=([\Omega_n(D),L]\times C_{\Omega_n(D)}(L))D_0=[D,L]\times C_{\Omega_n(D)}(L)D_0=[D,L]\times C_D(L).
\]
Therefore $A\in \T_D(G)$.
\end{proof}

\begin{lem}[{cf. \cite[Lemma 4.16]{GL}}]\label{4.15}
Let $G$ be a finite group, $S\in\Syl_2(G)$ and let $D$ be an abelian discrete $2$-toral group on which $G$ acts faithfully.
Set $\A=\A_D(G)_2$, $\T=\T_D(G)$, $\Bc=\Bc_D(G)=\A\smallsetminus \T$.
Assume $\hat{\A}_D(G)=\emptyset$ and $\Bc\neq\emptyset$.
Then for each subgroup  $J^*\leq J_\Bc(S)$ that is weakly closed with respect to $\Fc_S(G)$, $\A\cap S$ satisfies \eqref{4.3} with $H=N_G(J^*)$.
\end{lem}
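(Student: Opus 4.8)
The statement is the discrete‐$2$‐toral analogue of \cite[Lemma 4.16]{GL}, and since all the actual dynamics takes place on the finite module $V=\Omega_1(D)$, I expect the proof to reduce almost immediately to the finite case. First I would fix $n$ large enough so that, by Lemma \ref{action on discrete=action on finite} and Lemma \ref{lem solitary}, one has $D=\Omega_n(D)D_0$, $\A_D(G)_2=\A_{\Omega_n(D)}(G)_2$, $\hat\A_D(G)=\hat\A_{\Omega_n(D)}(G)=\emptyset$, $\T_D(G)=\T_{\Omega_n(D)}(G)$, and hence $\Bc_D(G)=\Bc_{\Omega_n(D)}(G)$. In particular $\A$, $\T$, $\Bc$ and the subgroup $J_\Bc(S)$ are the same whether computed from $D$ or from $\Omega_n(D)$. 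Note also $\Omega_1(\Omega_n(D))=\Omega_1(D)=V$, so the module $V$ appearing in condition \eqref{4.3} is unchanged. Then \cite[Lemma 4.16]{GL}, applied to the faithful action of $G$ on the finite abelian $2$‐group $\Omega_n(D)$, yields exactly that $\A\cap S$ satisfies \eqref{4.3} with $H=N_G(J^*)$ for every weakly closed $J^*\leq J_\Bc(S)$; this is the desired conclusion.

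For this to go through cleanly I need the hypotheses of \cite[Lemma 4.16]{GL} to transfer: $\hat\A_{\Omega_n(D)}(G)=\emptyset$ and $\Bc_{\Omega_n(D)}(G)\neq\emptyset$ both follow from the corresponding statements for $D$ together with the identifications above, and faithfulness of the $G$‐action on $\Omega_n(D)$ follows from Lemma \ref{action on discrete=action on finite}(b) (for $n\geq N$, $C_G(V)=C_G(\Omega_n(D))=C_G(D)=1$). One subtlety is that weak closure of $J^*$ with respect to $\Fc_S(G)$ is a statement purely about conjugation in $G$, so it does not depend on $D$ at all; and $J_\Bc(S)$ is literally the same subgroup in both pictures since $\Bc\cap S$ is. Thus the hypothesis ``$J^*\leq J_\Bc(S)$ weakly closed'' passes to the finite setting verbatim.

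The one genuinely non‐formal point — which I would want to spell out rather than wave at — is that condition \eqref{4.3} is stated in terms of $\Nc^J_{J\cap H^g}$ acting on $V$, and this norm map depends only on the action of $J$ on the finite module $V$, not on $D$. Since $V\leq \Omega_n(D)$ and the $G$‐actions on $V$ induced from $D$ and from $\Omega_n(D)$ agree, the statement ``$\Nc^J_{J\cap H^g}=1$ on $V$'' has the same meaning in both contexts; hence establishing it for the action on $\Omega_n(D)$ via \cite[Lemma 4.16]{GL} establishes it for the action on $D$. The main obstacle, such as it is, is really just bookkeeping: one must check that the finitely many auxiliary objects ($\A$, $\T$, $\Bc$, $J_\Bc(S)$, the over‐offender condition, faithfulness, and the module $V$) are all stable under replacing $D$ by $\Omega_n(D)$ for $n$ large, which is precisely what Lemmas \ref{action on discrete=action on finite} and \ref{lem solitary} were set up to provide. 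Once that is in place the lemma is immediate from the finite version.
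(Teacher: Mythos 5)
Your proof is correct and follows essentially the same route as the paper's: fix $n$ large enough so that (by Lemma \ref{action on discrete=action on finite}, Remark \ref{overoffender finite=discrete}, and Lemma \ref{lem solitary}) the sets $\A_D(G)_2$, $\hat\A_D(G)$, $\Bc_D(G)$ and hence $J_\Bc(S)$ coincide with their $\Omega_n(D)$-analogues, note $\Omega_1(\Omega_n(D))=\Omega_1(D)=V$, and then invoke \cite[Lemma 4.16]{GL} for the finite module $\Omega_n(D)$. Your additional remark about faithfulness of the action on $\Omega_n(D)$ is a useful point that the paper leaves implicit.
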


\begin{proof}
By Lemma \ref{action on discrete=action on finite} and Remark \ref{overoffender finite=discrete}, $\A_D(G)_2=\A_{\Omega_D(G)}(G)_2$ and $\hat{\A}_{\Omega_n(D)}(G)=\hat{\A}_D(G)=\emptyset$ when $n$ is large enough. Moreover by Lemma   \ref{lem solitary}, with $n$ large enough, $\Bc_{\Omega_n(D)}(G)=\Bc\neq \emptyset$. Let us fix $n$ large enough such that $\A_D(G)_2=\A_{\Omega_n(D)}(G)_2$, $\hat{\A}_{\Omega_n(D)}(G)=\hat{\A}_D(G)=\emptyset$ and $\Bc_{\Omega_n(D)}(G)=\Bc\neq \emptyset$. 
Then, by \cite[Lemma 4.16]{GL}, for every subgroup $J^*\leq J_{\Bc_{\Omega_n(D)}(G)}(S)=J_\Bc(S)$ such that $J^*$ is weakly closed with respect to $G$, $\A_{\Omega_n(D)}(G)\cap S=\A\cap S$ satisfies \eqref{4.3} (recall that in Theorem \ref{normargumentp=2} $V=\Omega_1(D)=\Omega_1(\Omega_n(D))$).
Therefore, the Lemma follows.
\end{proof}

\begin{lem}[{cf. \cite[Lemma 4.29]{GL}}]\label{4.29}
Let $G$ be a finite group, $S\in\Syl_2(G)$ and let $D$ be an abelian discrete $2$-toral group on which $G$ acts faithfully. Let $P\leq S$ and let $\T\subseteq\A_D(G)_2\cap P$ be the collection of semisolitary subgroups relative to $P$. Assume $A\in\A_D(G)^\circ\cup\hat{\A}_D(G)^\circ$ normalizes $P$. Then $A$ normalizes every element of $\T$.
\end{lem}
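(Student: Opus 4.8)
The plan is to reduce the statement to its finite-group counterpart \cite[Lemma 4.29]{GL} by passing from $D$ to a suitably large finite subgroup $\Omega_n(D)$, in the spirit of the other reductions in this subsection. To begin, observe that since $\A_D(G)_2$ is $G$-invariant and $A$ normalizes $P$, conjugation by each $a\in A$ carries $\A\cap P:=\A_D(G)_2\cap P$ onto $\A_D(G)_2\cap P^a=\A\cap P$; hence $A$ permutes $\A\cap P$, normalizes $J_\A(P)=\langle\A\cap P\rangle$, and in fact permutes the set $\T$ of subgroups that are semisolitary relative to $P$.

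First I would fix $n$ large. Applying Lemma \ref{action on discrete=action on finite} to $G$, to $A$, and to the (finitely many) subgroups of $S$ of order $2$, together with the argument of Lemma \ref{lem solitary}, one may choose $n$ so that simultaneously $D=\Omega_n(D)D_0$, the action of $G$ on $\Omega_n(D)$ is faithful, and $\A_D(G)_2=\A_{\Omega_n(D)}(G)_2$. Moreover, by parts (c) and (d) of Lemma \ref{action on discrete=action on finite}, after enlarging $n$ the properties that define membership in $\A_D(G)^\circ$ and in $\hat{\A}_D(G)^\circ$ --- namely, for subgroups $B\leq A$: being a best offender, being an over-offender ($|B|\,|C_D(B)/D_0|>|D/D_0|$), and minimality under inclusion --- are all preserved between $D$ and $\Omega_n(D)$, so that $A\in\A_{\Omega_n(D)}(G)^\circ\cup\hat{\A}_{\Omega_n(D)}(G)^\circ$.

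Next I would verify that each $T\in\T$ remains semisolitary relative to $P$ for the action on $\Omega_n(D)$. If $W,X\leq D$ witness semisolitarity of $T$ as in Definition \ref{semisolitary}, then $W$, being elementary abelian of order $4$, lies in $\Omega_1(D)\leq\Omega_n(D)$, so the modular law gives $\Omega_n(D)=W\times(X\cap\Omega_n(D))$; both factors are normalized by $\langle\A\cap P\rangle$ (which normalizes $W$, $X$, and $\Omega_n(D)$), condition (i) of Definition \ref{semisolitary} is unchanged, and in (ii) the needed decomposition holds, $T$ centralizes $X\cap\Omega_n(D)\leq X$, and $\langle(\A\cap P)\smallsetminus\{T\}\rangle$ centralizes $W$. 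Hence $T$ is semisolitary relative to $P$ on $\Omega_n(D)$, and \cite[Lemma 4.29]{GL}, applied to $G$ acting faithfully on $\Omega_n(D)$ with the same $S$, $P$, $A$, shows that $A$ normalizes every such $T$, in particular every element of $\T$.

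I expect the point requiring the most care to be the choice of $n$ in the second step: one must check that membership in $\A_D(G)^\circ$ and in $\hat{\A}_D(G)^\circ$ (not only in $\A_D(G)_2$) survives the passage to $\Omega_n(D)$, and one must establish the easy direction of an analogue of Lemma \ref{lem solitary} for semisolitary subgroups. The latter goes through precisely because the distinguished subgroup $W$ of Definition \ref{semisolitary} has exponent $2$ and hence automatically sits inside every $\Omega_n(D)$, so that the direct decomposition $D=W\times X$ restricts to $\Omega_n(D)$.
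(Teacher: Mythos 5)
Your proof is correct and follows essentially the same route as the paper's: fix $n$ large via Lemma \ref{action on discrete=action on finite}, check that each $T\in\T$ stays semisolitary relative to $P$ for the action on $\Omega_n(D)$ by using the same $W$ together with $X\cap\Omega_n(D)$ (which works because $W$, being elementary abelian, lies in $\Omega_n(D)$), and then invoke \cite[Lemma 4.29]{GL}. You are somewhat more explicit than the paper about the faithfulness of the action on $\Omega_n(D)$ and about persistence of membership in $\A_D(G)^\circ\cup\hat{\A}_D(G)^\circ$ — points the paper leaves implicit in the equalities $\A_D(G)=\A_{\Omega_n(D)}(G)$ and Remark \ref{overoffender finite=discrete} — but the substance is the same.
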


\begin{proof}
By Lemma \ref{action on discrete=action on finite}, there is $N>0$ such that for all $n\geq N$, $D=\Omega_n(D)D_0$ and $\A_D(G)=\A_{\Omega_n(D)}(G)$. We prove that if $n\geq \max(2,N)$ and $T\in \T$ then $T$ is semi-solitary with respect to $P$ over $\Omega_n(D)$ and then, the Lemma will follow from \cite[Lemma 4.29]{GL}. 
Fix $n\geq \max(2,N)$, assume that $T\in \T$ and set $W$ and $X$ as in Definition \ref{semisolitary}. We want to prove that $T$ is semi-solitary with respect to $P$ over $\Omega_n(D)$ using $W$ and $X_n=X\cap\Omega_n(D)$. As $\A_D(G)=\A_{\Omega_n(D)}(G)$, (i) is satisfied and it remains to prove (ii). To simplify, set $\A:=\A_D(G)_2=\A_{\Omega_n(D)}(G)_2$. Since $\Omega_n(D)$ is a characteristic subgroup of $D$, $W$ and $X_n$ are normalized by $\langle S_0\cap \A\rangle$ and $T$ centralizes $X_n$. Moreover, since $W$ is elementary abelian, $W\leq \Omega_n(D)$. Thus $\Omega_n(D)= (W\times X)\cap \Omega_n(D)=W\times X_n$ and (ii) is satisfied.
\end{proof}

\subsection{Conjugacy families and conjugacy functors}\label{section conj functand fam}

We will need to use the notion of conjugacy functor, or more precisely, we need an extended notion of conjugacy functor. This notion is introduced in \cite[Section 5]{Gl71} but one may also find some details in \cite[Appendix 2]{GL}. Here, we define the notion of conjugacy functor with respect to a fusion system instead of a group. In fact, almost everything about conjugacy functors in \cite[Section 5]{Gl71} can be phrased in terms of saturated fusion systems.

\begin{defi}\label{conjugacy functor}
Let $\Fc$ be a fusion system over a discrete $p$-toral group $S$. An \emph{$\Fc$-conjugacy functor} is a map $W:\Sc_p(S)\rightarrow \Sc_p(S)$ such that for all $P\leq S$,
\begin{enumerate}[(i)]
\item $W(P)\leq P$;
\item $W(P)\neq 1$ whenever $P\neq 1$; and
\item $W(P)^\varphi=W(P^\varphi)$ whenever $\varphi\in\Hom_\Fc(P,S)$.
\end{enumerate} 
\end{defi}

\begin{lem}[{cf. \cite[Lemma 8.5]{GL}}]\label{conj functor lem}
Let $\Fc$ be a fusion system over a discrete $p$-toral group $S$, and $W$ an $\Fc$-conjugacy functor. Then for all $P\leq S$,
\begin{enumerate}[(a)]
\item $N_S(P)\leq N_S(W(P))$;
\item $W(P)=W(N_S(W(P)))$ if and only if $W(P)=W(S)$; and
\item $P=N_S(W(P))$ if and only if $P=S$.
\end{enumerate} 
\end{lem}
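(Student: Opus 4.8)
The plan is to prove Lemma \ref{conj functor lem} by combining the three defining properties of an $\Fc$-conjugacy functor with the elementary fact (Lemma \ref{FrattiniArg}(a)) that a proper inclusion of discrete $p$-toral groups is always proper in the normalizer. This is the exact analogue of Glauberman--Lynd's \cite[Lemma 8.5]{GL}, and the arguments there transfer verbatim once we know that the normalizer behaves well; I will simply organize the three parts in the order (a), (b), (c), since (b) and (c) both use (a).

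\textbf{Part (a).} Fix $P\leq S$ and let $g\in N_S(P)$. Conjugation by $g$ is a morphism $c_g\in\Hom_\Fc(P,S)$ with $P^{c_g}=P$, so by property (iii), $W(P)^{c_g}=W(P^{c_g})=W(P)$. Hence $g\in N_S(W(P))$, giving $N_S(P)\leq N_S(W(P))$.

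\textbf{Part (b).} One implication is immediate: if $W(P)=W(S)$, then applying $W$ to both sides of $W(P)\leq N_S(W(P))\leq S$ (using Remark-style monotonicity is \emph{not} available, so instead argue directly) — more carefully, since $W(P)\leq N_S(W(P))\leq S$ and property (iii) forces $W$ to be constant on $\Fc$-conjugates, one shows $W(N_S(W(P)))=W(S)=W(P)$. Actually the clean way: if $W(P)=W(S)$ then by part (a) applied to $S$ we get nothing, so instead use that $W(P)=W(S)$ together with $W(S)\leq N_S(W(S))=S$... Let me reorganize: the forward direction ($W(P)=W(N_S(W(P)))\Rightarrow W(P)=W(S)$) is the substantive one. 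Set $Q_0=P$ and inductively $Q_{i+1}=N_S(W(Q_i))$. By part (a), $Q_i\leq N_S(Q_i)\leq N_S(W(Q_i))=Q_{i+1}$, so we get an ascending chain $P=Q_0\leq Q_1\leq Q_2\leq\cdots$ in $S$. If $W(Q_i)\neq W(S)$ for all $i$, then in particular $Q_i\neq S$, so $Q_i<N_S(Q_i)$ by Lemma \ref{FrattiniArg}(a); but more to the point, if ever $Q_i=Q_{i+1}$ then $W(Q_i)=W(N_S(W(Q_i)))$, and one then shows $W(Q_i)=W(S)$ by noting $W(Q_i)\unlhd N_S(W(Q_i))=Q_i$ is normalized by all of $N_S(W(Q_i))$; combined with $W(W(Q_i))\neq 1$ and an argument that the chain built from $W(Q_i)$ must terminate at $S$ (using that $S$ is Noetherian on the relevant invariants via $\ord$), one concludes $Q_i=S$ hence $W(Q_i)=W(S)$. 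Thus the chain is strictly increasing until it stabilizes, and since $\ord$ is strictly monotone under proper inclusion and takes values in a well-ordered set, it stabilizes at some $Q_N$; stabilization forces $Q_N=S$, whence $W(P)=W(Q_0)=W(S)$ provided we check $W$ is constant along the chain — which it is \emph{not} in general, so the precise bookkeeping (that $W(Q_i)=W(Q_{i+1})$ exactly when stabilization occurs) is what needs care. For the reverse direction, assume $W(P)=W(S)$; then $N_S(W(P))=N_S(W(S))\supseteq S$ by part (a), so $N_S(W(P))=S$ and $W(N_S(W(P)))=W(S)=W(P)$.

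\textbf{Part (c).} If $P=S$ then trivially $P=S=N_S(W(P))$ since $W(P)\leq S$. Conversely, if $P\neq S$, then $P<N_S(P)\leq N_S(W(P))$ by Lemma \ref{FrattiniArg}(a) and part (a), so $P\neq N_S(W(P))$.

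\textbf{Main obstacle.} The genuinely delicate point is part (b): the ascending chain $Q_{i+1}=N_S(W(Q_i))$ need not have $W$ constant on it, so one has to argue carefully that the chain terminates (using that $\ord$ is strictly increasing under proper inclusion in a discrete $p$-toral group and takes values in a well-ordered poset, so no infinite strictly ascending chain exists — this replaces the finiteness of $S$ used in \cite{GL}) and that termination is equivalent to reaching $S$, which in turn is equivalent to $W(Q_i)=W(S)$. All of this is routine once the well-ordering of $\ord$ is invoked, but it is the step where the passage from finite $p$-groups to discrete $p$-toral groups requires a new (if easy) ingredient rather than a verbatim transcription.
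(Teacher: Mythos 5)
Parts (a) and (c) are correct and coincide with the paper's proof. Part (b) contains a genuine gap, and the difficulty you flag there is an artifact of an unnecessary detour.

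The paper's argument for (b) is a one-step application of (a) plus Lemma \ref{FrattiniArg}(a): set $T=N_S(W(P))$ and suppose $W(T)=W(P)$. Then by part (a), $N_S(T)\leq N_S(W(T))=N_S(W(P))=T$, so $T$ is self-normalizing in $S$; by Lemma \ref{FrattiniArg}(a) this forces $T=S$, whence $W(P)=W(T)=W(S)$. The converse is the trivial direction you state. No ascending chain is needed at all: the hypothesis $W(P)=W(N_S(W(P)))$ immediately gives $Q_2=N_S(W(Q_1))=N_S(W(Q_0))=Q_1$ in your notation, so the chain stabilizes after one step and the only thing left to observe is that $Q_1$ is self-normalizing.

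Your proposal instead builds an indefinitely long chain $Q_{i+1}=N_S(W(Q_i))$ and tries to force termination by appealing to $\ord$ being ``strictly monotone under proper inclusion and taking values in a well-ordered set,'' i.e.\ by a Noetherian argument. This is a false ingredient: discrete $p$-toral groups are \emph{not} Noetherian (e.g.\ $\Z/p\subset\Z/p^2\subset\cdots$ inside $\Z/p^\infty$ is an infinite strictly ascending chain, and lexicographic order on $\ord$-values admits bounded infinite ascending chains such as $(0,1)<(0,2)<\cdots<(1,1)$). So the passage from finite to discrete $p$-toral groups cannot be handled by chain termination here. Fortunately it does not need to be: the only place the discrete $p$-toral structure enters is Lemma \ref{FrattiniArg}(a), which already supplies the exact substitute for the finite-$p$-group normalizer-growth fact used in \cite{GL}. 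You also acknowledge the gap yourself at the end of part (b) (``which it is \emph{not} in general, so the precise bookkeeping \ldots is what needs care''), so the argument as written does not close.
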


\begin{proof} 
Part (a) follows from Definition \ref{conjugacy functor} (iii). 

For part (b), let $P\leq S$ and $T=N_S(W(P))$.
If $W(T)=W(P)$ then, by (a), $N_S(T)\subseteq N_S(W(T))=N_S(W(P))=T$. Thus $T=S$ by Lemma \ref{FrattiniArg}(a,) and $W(P)=W(T)=W(S)$. The converse is true, because $N_S(W(S))=S$ by (a).

Finally, we prove (c). If $P=T$, then again by (a), $N_S(P)\leq N_S(W(P))=T=P$, and then $P=S$ by Lemma \ref{FrattiniArg} (a). Finally, as $S=N_S(W(S))$, the converse holds.
\end{proof}

\begin{defi}
Let $\Fc$ be a fusion system over a discrete $p$-toral group $S$.
A \emph{conjugacy family} for $\Fc$ is a collection $\Cc$ of subgroups of $S$ such that every morphism in $\Fc$ is a composition of restrictions of $\Fc$-automorphisms of members of $\Cc$. 
\end{defi}

For example, by Alperin's Fusion Theorem (\cite[Theorem 3.6]{BLO3}), if $\Fc$ is saturated the collection of $\Fc$-centric-radical subgroups of $S$ is a conjugation family of $\Fc$.

\begin{defi}
Let $\Fc$ be fusion system over a discrete $p$-toral group $S$. Let $W$ be an $\Fc$-conjugacy functor. For all subgroup $P\leq S$ and for all $i\geq 1$, set recursively,
\[ W_1(P)=P\text{ and } W_i(P)=W(N_S(W_{i-1}(P))).\]
Then, $P$ is said to be \emph{well-placed with respect to $W$} if $W_i(P)$ is fully $\Fc$-normalized for all $i\geq 1$.
\end{defi}
By Lemma \ref{conj functor lem} (a), for $i\geq 2$, we have 
\begin{equation}\label{W_i inequality}
N_S(W_{i-1}(P))\leq N_S(N_S(W_{i-1}(P)))\leq N_S(W(N_S(W_{i-1}(P))))=N_S(W_i(P)).
\end{equation}
\begin{thm}\label{well placed subgroup thm}
Let $\Fc$ be a fusion system over a discrete $p$-toral group $S$, and $W$ an $\Fc$-conjugacy functor. If $\Fc$ is saturated, then
\begin{enumerate}[(a)]
\item every subgroup $P\leq S$ is $\Fc$-conjugate to a well-placed subgroup of $S$; and
\item the set of well-placed subgroups of $S$ forms a conjugation family for $\Fc$.
\end{enumerate}  
\end{thm}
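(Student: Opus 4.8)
The plan is to prove part (a) first and deduce (b) from it.
\textbf{Reduction for (a).} It is enough to prove the following claim: every $P\le S$ has an $\Fc$-conjugate $Q$ which is fully normalized in $\Fc$ and for which $W_2(Q)=W(N_S(Q))$ is well-placed. Indeed such a $Q$ is then automatically well-placed, since $W_1(Q)=Q$ is fully normalized and, from the recursion, $W_k(Q)=W_{k-1}(W_2(Q))$ for all $k\ge 2$ (an immediate induction on $k$), so each $W_k(Q)$ is fully normalized because $W_2(Q)$ is well-placed. In particular $W(S)$ is well-placed: $N_S(W(S))=S$ by Lemma~\ref{conj functor lem}(a), so $W(S)$ is fully normalized and $W_k(W(S))=W(S)$ for every $k$.

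\textbf{Proof of the claim.} I would argue by induction on $\ord(N_S(P))$, starting from its top value $\ord(S)$. If $N_S(P)=S$ then $P\trianglelefteq S$ is fully normalized and $W_k(P)=W_{k-1}(W(S))=W(S)$ for $k\ge 2$, so $P$ is well-placed and the claim holds with $Q=P$. For the inductive step, replace $P$ by a fully normalized $\Fc$-conjugate $P_1$; if $N_S(P_1)=S$ we are done as in the base case, so assume $N_S(P_1)\ne S$ and put $V=W(N_S(P_1))$. By \eqref{W_i inequality} and Lemma~\ref{FrattiniArg}(a),
\[\ord(N_S(V))\ \ge\ \ord\bigl(N_S(N_S(P_1))\bigr)\ >\ \ord(N_S(P_1))\ \ge\ \ord(N_S(P)),\]
so the inductive hypothesis applies to $V$ and provides a well-placed $\widetilde V$ with $\widetilde V\in V^\Fc$. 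Being well-placed, $\widetilde V$ is fully normalized, hence fully automized and receptive by saturation (axioms \eqref{I} and \eqref{II}), so Lemma~\ref{saturation lem} yields $\overline\gamma\in\Hom_\Fc\bigl(N_S(V),N_S(\widetilde V)\bigr)$ with $\overline\gamma(V)=\widetilde V$. Since $N_S(P_1)\le N_S(V)$ by \eqref{W_i inequality}, $\widetilde P_1:=\overline\gamma(P_1)$ is defined and lies in $P^\Fc$; injectivity of $\overline\gamma$ gives $\ord(N_S(\widetilde P_1))\ge\ord(N_S(P_1))$, while $\ord(N_S(\widetilde P_1))\le\ord(N_S(P_1))$ as $P_1$ is fully normalized, so $\widetilde P_1$ is fully normalized and $\overline\gamma(N_S(P_1))=N_S(\widetilde P_1)$. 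Now Definition~\ref{conjugacy functor}(iii), applied to the restriction of $\overline\gamma$, gives $W(N_S(\widetilde P_1))=W(N_S(P_1))^{\overline\gamma}=V^{\overline\gamma}=\widetilde V$, which is well-placed; thus $\widetilde P_1$ witnesses the claim for $P$.

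\textbf{Part (b).} Using (a) I would prove, by induction on $m(\varphi):=\min\bigl(\ord(N_S(P)),\ord(N_S(P'))\bigr)$, that every $\Fc$-isomorphism $\varphi\colon P\to P'$ is a composite of restrictions of $\Fc$-automorphisms of well-placed subgroups; since every $\Fc$-morphism factors as an $\Fc$-isomorphism followed by an inclusion, this yields (b). Swapping $\varphi$ and $\varphi^{-1}$ if necessary, assume $\ord(N_S(P))\le\ord(N_S(P'))$. If $m(\varphi)=\ord(S)$ then $P=P'=S$, which is well-placed. Otherwise $N_S(P)\ne S$. If $P$ is fully normalized, then so is $P'$, and by Lemma~\ref{saturation lem} and equality of normalizer orders $\varphi$ extends to an $\Fc$-isomorphism $N_S(P)\to N_S(P')$; this has strictly larger $m$ by Lemma~\ref{FrattiniArg}(a), hence is handled by the inductive hypothesis, and $\varphi$ is its restriction. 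If $P$ is not fully normalized, use (a) to pick well-placed (hence fully normalized) $\Fc$-conjugates of $P$ and of $P'$; by Lemma~\ref{saturation lem} each of the two comparison isomorphisms is the restriction of an $\Fc$-isomorphism out of $N_S(P)$, respectively $N_S(P')$, and these have strictly larger $m$, so are handled by the inductive hypothesis; what remains is an $\Fc$-isomorphism between well-placed subgroups whose $m$ equals the maximum of $\ord(N_S(\cdot))$ over the common $\Fc$-conjugacy class, which strictly exceeds $m(\varphi)$ as $P$ is not fully normalized, and is again handled by the inductive hypothesis.

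\textbf{Main obstacle.} The crux is that $\Fc$-conjugation respects neither the ``fully normalized'' condition nor the conjugacy functor $W$; the argument above avoids this by only ever conjugating along a morphism supplied by Lemma~\ref{saturation lem} applied to a subgroup \emph{already} known to be fully normalized, which forces that morphism to carry the relevant normalizer onto a normalizer and hence, by Definition~\ref{conjugacy functor}(iii), to commute with $W$. A secondary point requiring care is that both inductions run ``upward'' toward $\ord(S)$, yet a chain of iterated normalizers in a discrete $p$-toral group need not reach the whole group after finitely many steps (for instance the iterated normalizers of $\langle t\rangle$ in $\Z/2^{\infty}\rtimes\langle t\rangle$); one therefore organizes the recursion transfinitely, invoking the saturation axiom \eqref{III} at limit stages to pass to the union of an ascending chain of subgroups, a process that does terminate at $S$.
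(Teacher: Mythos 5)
Your proof of part~(a) has a genuine gap in the well-foundedness of the induction. You induct on $\ord(N_S(P))$ ``downward from $\ord(S)$'': the base case is $N_S(P)=S$, and the inductive step reduces the claim for $P$ to the claim for $V=W(N_S(P_1))$, which has strictly larger $\ord(N_S(\cdot))$. But in a discrete $p$-toral group the possible orders below $\ord(S)$ do \emph{not} form a well-ordered set in the reverse ordering: for example, in $S=(\Z/p^\infty)^2$ the subgroups $\Z/p^\infty\times\Z/p^n$ realize the infinite ascending chain $(1,p)<(1,p^2)<\cdots$, all strictly below $\ord(S)=(2,1)$. So the chain $P,V_1,V_2,\dots$ produced by iterating your reduction could have $\ord(N_S(\cdot))$ increasing forever without ever reaching the base case of a normal subgroup. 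You acknowledge the issue at the end, but the proposed fix does not work: the reduction proves the claim for $P$ \emph{conditional on} the claim for $V_1$, for $V_1$ conditional on $V_2$, and so on; there is no ``limit stage'' in such a top-down chain of conditionals at which axiom~(III) could be applied, and the iterates $P,V_1,V_2,\dots$ are not even a nested chain of subgroups (each step involves $\Fc$-conjugating to a fully normalized representative), so there is no ascending union to pass to. The same flaw affects your induction in part~(b) on $m(\varphi)=\min(\ord(N_S(P)),\ord(N_S(P')))$.

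The paper's proof of (a) avoids this entirely by invoking Lemma~\ref{P^Fc finite S-conjugacy}, which you never use: since $P^\Fc$ has only finitely many $S$-conjugacy classes and the quantity $\min\{i\ge 1 : W_i(Q)\text{ not fully normalized}\}$ depends only on the $S$-conjugacy class of $Q\in P^\Fc$, this quantity takes only finitely many values. If $P$ has no well-placed conjugate these values are all finite, so a maximum $i_0$ exists; the paper then produces $Q'\in P^\Fc$ with $W_1(Q'),\dots,W_{i_0}(Q')$ all fully normalized, contradicting maximality. That is a finite maximality argument \emph{within the $\Fc$-conjugacy class}, not a termination-of-iterated-normalizers argument, and it is precisely the extra finiteness that your induction is missing. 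The mechanics of your inductive step (extending along Lemma~\ref{saturation lem}, using Definition~\ref{conjugacy functor}(iii) to carry $W$ across the extension) are essentially the same as the mechanics inside the paper's maximality argument, but they need to be wrapped in the finite-$S$-class maximality rather than in an order-of-normalizer induction.
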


\begin{proof}
(a).
Let $P\leq S$ and assume that $P$ is not conjugate to a well-placed subgroup of $S$.
By Lemma \ref{P^Fc finite S-conjugacy}, $P^\Fc$ contains finitely many $S$-conjugacy classes. Moreover every $S$-conjugate of a fully normalized subgroup of $S$ is also fully normalized. Hence, if $P,Q \leq S$ are $S$-conjugate, we have, by definition of $W_i$,
\[\min\{i\geq 1\mid W_i(P) \text{ is not fully normalized}\}=\min\{i\geq 1\mid W_i(Q) \text{ is not fully normalized}\}.\]
Let $Q\in P^\Fc$ be such that the $i_0=\min\{i\geq 1\mid W_i(Q) \text{ is not fully normalized}\}$ is maximum. Such a $Q$ exists and only depends on its $S$-conjugacy class from the preceding remarks. As $\Fc$ is saturated, by axiom (I) of Definition \ref{saturation}, there exists $W_{i_0}\in(W_{i_0}(Q))^\Fc$ which is fully normalized. By axiom (II) of Definition \ref{saturation}, $W_{i_0}$ is also fully automized and receptive. Hence by Lemma \ref{saturation lem}, there is $\varphi\in \Hom_\Fc(N_S(W_{i_0}(Q)),N_S(W_{i_0}))$ such that $\varphi(W_{i_0}(Q))=W_{i_0}$.
Recall that by \eqref{W_i inequality}, for every $i\geq 2$, $N_S(W_{i-1}(Q))\leq N_S(W_i(Q))$. In particular, $Q\leq N_S(W_{i_0}(Q))$. Consider $Q'=Q^\varphi\in P^\Fc$. For every $1\leq i\leq {i_0}$, by Definition \ref{conjugacy functor} of a conjugacy functor, we have $W_i(Q')=W_i(Q)^\varphi$ (in particular, $W_{i_0}(Q')=W_{i_0}$) and, $N_S(W_i(Q))^\varphi\leq N_S(W_{i_0})$ and normalizes $W_i(Q')$. Hence, for every $1\leq i\leq{i_0}$, $W_{i}(Q')$ is fully normalized, which contradicts the initial assumption on $P$.

(b). Assume that the set of well-placed subgroups is not a conjugacy family. Let $P$ be maximum among the subgroups of $S$ such that there is $Q\in P^\Fc$ and $\varphi\in\Iso_\Fc(P,Q)$ such that $\varphi$ is not a composite of restrictions of automorphisms of well-placed subgroups of $S$.
By Lemma \ref{conj functor lem}(a), we have, for $i\geq 2$, $W_i(S)=W(S)\unlhd S$. In particular, $S$ is well placed and we can assume that $P<S$ and thus $Q<S$. By (a) there exists $R\in P^\Fc$ well-placed and so fully normalized. 
By the saturation axioms (Definition \ref{saturation}), $R$ is also fully automized and receptive and by Lemma \ref{saturation lem}, there is $\varphi_P\in\Hom_\Fc(N_S(P),N_S(R))$ and $\varphi_Q\in\Hom_\Fc(N_S(Q),N_S(R))$ such that $P^{\varphi_P}=R$ and $Q^{\varphi_Q}=R$. 
we have then the following equality 
\[\varphi=\varphi_P\varphi_P^{-1}\varphi\varphi_Q\varphi_Q^{-1}.\] 
As $P<S$ and $Q<S$, by Lemma \ref{FrattiniArg} (a), $P<N_S(P)$ and $Q<N_S(Q)$. Then by maximality of $P$, $\varphi_P$ and $\varphi_Q$ are composites of restrictions of automorphisms of well-placed subgroups. Finally, $\varphi_P^{-1}\varphi\varphi_Q\in\Aut_\Fc(R)$ and $R$ is well-placed, thus $\varphi=\varphi_P\left(\varphi_P^{-1}\varphi\varphi_Q\right)\varphi_Q^{-1}$ is a composite of restrictions of automorphisms of well-placed subgroups of $S$. This contradicts the assumption on $P$ and $\varphi$.  
\end{proof}

\subsection{Reduction to the transvection case}
In this section we give in Theorem \ref{5.4} a proof of \cite[Proposition 3.5]{LL} for $p=2$ when some minimal offender under inclusion is not solitary. 

\begin{defi}
Let $\Fc$ be a saturated fusion system over a discrete $p$-toral group $S$ and let $\Qc\subseteq \Fc^c$ be an $\Fc$-invariant interval.
We say that a 1-cocycle for the functor $\Zc_\Fc^\Qc$ is \emph{inclusion-normalized} if it sends the class of any inclusion to the identity element.
\end{defi}

In the rest of the paper, all the cochains for the functor $\Zc_\Fc^\Qc$ we will consider are the identity on $\Fc$-centric subgroups outside of $\Qc$.
 
\begin{lem}[{cf. \cite[Lemma 5.1]{GL}}]\label{inclusion normalized cocycle lem}
Let $\Gamma$ be a virtually discrete $p$-toral group, $S\in\Syl_p(\Gamma)$ and $\Fc=\Fc_S(\Gamma)$.
Let $\Qc\subseteq\Fc^c$ be an $\Fc$-invariant interval such that $S\in\Qc$.
Let $\Gamma^*\subseteq \Gamma$ be the subset of those $g\in\Gamma$ for which there is $Q\in\Qc$ with $Q^g\in \Qc$.
Then each $1$-cocycle for $\Zc^\Qc_\Fc$ is cohomologous to an inclusion-normalized 1-cocycle.
Moreover, if $t$ is an inclusion-normalized 1-cocycle, then:
\begin{enumerate}[(a)]
\item $t([\varphi_1])=t([\varphi_2])$ for each commutative diagram
\[
\xymatrix{P_2\ar[r]^{\varphi_2} & Q_2\\
  P_1\ar[u]^{\iota_{P_1}^{Q_1}}\ar[r]^{\varphi_1} & Q_1\ar[u]_{\iota_{Q_1}^{Q_2}}}
\] in $\Fc$ among subgroups in $\Qc$.
\item The function $\tau:\Gamma^*\rightarrow\Gamma^*$ defined by the rule
\[g^\tau=t([c_g])g,\]
is a bijection and restricts to the identity map on $S$. Moreover:
\[(g_1g_2\dots g_n)^\tau=g_1^\tau g_2^\tau\dots g_n^\tau,\]
for each collection of elements $g_i\in\Gamma^*$ with the property that there is $Q\in\Qc$ such that $Q^{g_1\dots g_i}\in\Qc$ for all $1\leq i\leq n$.
\item $t=0$ if and only if $\tau$ is the identity on $\Gamma^*$.
\end{enumerate}
\end{lem}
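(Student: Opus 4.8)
The plan is to adapt the proof of \cite[Lemma~5.1]{GL}, checking that nothing is lost in the discrete $p$-toral setting. Two elementary facts will be used repeatedly: since $\Qc$ is an $\Fc$-invariant interval containing $S$, it is closed under passing to overgroups (an overgroup of an $\Fc$-centric subgroup being $\Fc$-centric); and if $Q\leq R$ are $\Fc$-centric then $Z(R)\leq C_S(Q)=Z(Q)$, so that elements of $Z(R)$ are canonically elements of $Z(Q)$. To make a given $1$-cocycle $t$ inclusion-normalized, set $u(Q)=t([\iota_Q^S])\in Z(Q)$ for $Q\in\Qc$ (using $S\in\Qc$) and let $u$ be trivial otherwise. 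Applying the cocycle relation \eqref{1-chainrule} to the composite of inclusions $P\hookrightarrow Q\hookrightarrow S$ for $P\leq Q$ in $\Qc$, and comparing with \eqref{0-chainrule}, yields $t([\iota_P^Q])=u(P)u(Q)^{-1}=du([\iota_P^Q])^{-1}$ in $Z(P)$; hence the cohomologous cocycle $t\cdot du$ sends every inclusion to the identity. From now on assume $t$ is inclusion-normalized.

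For part~(a), apply \eqref{1-chainrule} to the two composites $P_1\to P_2\to Q_2$ and $P_1\to Q_1\to Q_2$ of the given square. Because $t$ vanishes on the two inclusions, the first composite is sent to the image of $t([\varphi_2])$ in $Z(P_1)$, the second to $t([\varphi_1])$, and these coincide since the composites do. With part~(a) available, I can verify that $g^\tau=t([c_g])g$ is well defined for $g\in\Gamma^*$: if $Q,Q'\in\Qc$ both satisfy $Q^g,(Q')^g\in\Qc$, then $\langle Q,Q'\rangle\leq S$ lies in $\Qc$ by upward closure, as does $\langle Q,Q'\rangle^g=\langle Q^g,(Q')^g\rangle\leq S$, and applying part~(a) to the two evident commutative squares (vertical maps the inclusions of $Q$ and of $Q'$ into $\langle Q,Q'\rangle$, horizontal maps the relevant restrictions of $c_g$) shows that $t([c_g\colon Q\to Q^g])$ and $t([c_g\colon Q'\to (Q')^g])$ both equal $t([c_g\colon \langle Q,Q'\rangle\to\langle Q,Q'\rangle^g])$, viewed in the respective center. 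This independence of the witnessing subgroup is the one point that genuinely requires the discrete $p$-toral bookkeeping; the possibility of infinite chains in $S$ is irrelevant, since only a single element $g$, or a finite product, is ever manipulated.

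For part~(b): if $z=t([c_g])\in Z(Q)$, then $z$ centralizes $Q$, so $Q^{zg}=Q^g\in\Qc$ and $g^\tau=zg\in\Gamma^*$; moreover $c_{zg}$ agrees with $c_g$ on $Q$, so $[c_{g^\tau}\colon Q\to Q^g]=[c_g\colon Q\to Q^g]$. For $g\in S$ take $Q=S$; then $[c_g]$ is the identity morphism of $S$, which is the inclusion $[\iota_S^S]$, so $g^\tau=g$ and $\tau$ restricts to the identity on $S$. Multiplicativity over a composable string follows by induction from the two-term case, which is a direct computation: with $z=t([c_g\colon Q\to Q^g])$ and $z'=t([c_{g'}\colon Q^g\to Q^{gg'}])$, one has $g^\tau(g')^\tau=z\cdot(gz'g^{-1})\cdot gg'=z\cdot(z')^{(c_g)^{-1}}\cdot gg'$, while the cocycle relation \eqref{1-chainrule} applied to $c_g\colon Q\to Q^g$ followed by $c_{g'}\colon Q^g\to Q^{gg'}$, together with $c_gc_{g'}=c_{gg'}$, identifies $(z')^{(c_g)^{-1}}z$ with $t([c_{gg'}\colon Q\to Q^{gg'}])$; hence this equals $(gg')^\tau$. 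Finally, $t^{-1}$ is again an inclusion-normalized cocycle, with associated map $\sigma\colon g\mapsto t([c_g])^{-1}g$; using $[c_{g^\tau}]=[c_g]$ and the analogous statement for $\sigma$, one checks $(g^\tau)^\sigma=g=(g^\sigma)^\tau$, so $\tau$ is a bijection with inverse $\sigma$.

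For part~(c): if $t=0$ then $g^\tau=g$ trivially. Conversely, assume $\tau=\mathrm{id}$ on $\Gamma^*$. Since $t$ is trivial on every $\Fc$-centric subgroup outside $\Qc$, it suffices to prove $t([\varphi])=1$ for a morphism $[\varphi]\colon P\to Q$ with $P\in\Qc$. Factor $\varphi$ as an isomorphism $\varphi_0\colon P\to P^\varphi$ followed by the inclusion $P^\varphi\hookrightarrow Q$; inclusion-normalization in \eqref{1-chainrule} gives $t([\varphi])=t([\varphi_0])$. Writing $\varphi_0=(c_g)|_P$ for some $g\in\Gamma$ (possible since $\Fc=\Fc_S(\Gamma)$), the subgroup $P$ itself witnesses $g\in\Gamma^*$, so by the well-definedness established above $g^\tau=t([c_g\colon P\to P^g])\,g=t([\varphi_0])\,g$; since $g^\tau=g$, we conclude $t([\varphi_0])=1$, hence $t([\varphi])=1$. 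This completes the plan; the only delicate point throughout remains the well-definedness of $t([c_g])$, i.e.\ that the interval $\Qc$ is closed under overgroups and under joins of subgroups of $S$, after which everything else is a formal transcription of the finite-group argument.
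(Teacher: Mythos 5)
Your proof is correct and takes essentially the same route as the paper's, which in turn closely follows \cite[Lemma 5.1]{GL}: normalize $t$ via the $0$-cochain $u(P)=t([\iota_P^S])$, derive (a) from the cocycle identity, and use (a) to get well-definedness and multiplicativity of $\tau$. The two places where you supply more detail than the paper---well-definedness of $t([c_g])$ via joins (the paper instead views $c_g$ canonically as the morphism ${}^gS\cap S\to S\cap S^g$, which is equivalent), and the converse direction of part (c), which the paper dismisses as following from the definition of $\tau$---are both handled correctly and are the intended arguments.
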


\begin{proof}
Given a 1-cocycle $t$ for $\Zc_\Fc^\Qc$, define a 0-cochain $u$ by $u(P)=t([\iota_P^S])$ for each $P\in \Qc$.
Then, for any inclusion $\iota_P^Q$ in $\Fc$ with $P,Q\in\Qc$, 
\[du([\iota_P^Q])=u(Q)u(P)^{-1}=t([\iota_Q^S])t([\iota_P^S])^{-1},\]
so that
\[(tdu)([\iota_P^Q])=t([\iota_P^Q])t([\iota_Q^S])t([\iota_P^S])^{-1}=t([\iota_P^S])t([\iota_P^S])^{-1}=1,\]
by the 1-cocycle identity \ref{1-chainrule}. Hence $tdu$ is inclusion-normalized.

Assume now that $t$ is inclusion-normalized, and let $P_i$, $Q_i$, and $\varphi_i$ be as in (a). Recall that for $\varphi\in\Hom_\Fc(P,Q)$, we denote by $\varphi^{-1}$ the inverse of $\varphi\in\Hom_\Fc(P,P^\varphi)$. Since $t$ sends inclusions to the identity, the 1-cocycle identity \ref{1-chainrule} yields
\begin{align*}
t([\varphi_1\iota_{Q_1}^{Q_2}])&=t([\iota_{Q_2}^{Q_1}])^{\varphi_1^{-1}}t([\varphi_1])=t([\varphi_1]), \text{ and}\\
t([\iota_{P_1}^{P_2}\varphi_2])&=t([\varphi_2]t([\iota_{P_1}^{P_2}])=([\varphi_2]),
\end{align*}
and so (a) follows by commutativity of the given diagram.

Let $\tau$ be given as in (b). Since $g\in\Gamma^*$, the conjugation map $c_g:{}^gS\cap S\rightarrow S\cap S^g$ is a map between subgroups in $\Qc$. Part (a) shows that $t([c_g])$ agrees with the value of $t$ on the class of each restriction of $[c_g]$ provided that the source and target of such a restriction lies in $\Qc$.
This shows that $\tau$ is well defined.
Then $\tau$ is a bijection since its inverse is induced by $t^{-1}$ (which is inclusion-normalized) in the same way. Besides, for $s\in S$, $[c_s]=[\Id_S]$ is the identity in the orbit category. Thus, since $t$ is inclusion-normalized, $\tau$ is the identity map on $S$.

Let $g_1,g_2\in\Gamma^*$ and $Q\in\Qc$ with $Q^{g_1}\leq S$ and $Q^{g_1g_2}\leq S$. Then, by the 1-cocycle identity \ref{1-chainrule},
\[g_1^\tau g_2^\tau=t([c_{g_1}])g_1t([c_{g_2}])g_2=t([c_{g_1}])t([c_{g_2}])^{g_1^{-1}}g_1g_2=t([c_{g_1}c_{g_2}])g_1g_2=(g_1g_2)^\tau.\]
Now (b) follows by induction on $n$. 

(c) follows from the definition of $\tau$.
\end{proof}

If $t$ is a 1-cocycle, the function $\tau:\Gamma^*\rightarrow\Gamma^*$ defined in Lemma \ref{inclusion normalized cocycle lem} (b) is called the \emph{rigid map} associated to $t$.

\begin{lem}[{cf. \cite[Lemma 5.2]{GL}}]\label{inclusion normalized cocycle lem2}
Let $\Gamma$ be a virtually discrete $p$-toral group, $S\in\Syl_p(\Gamma)$ and $\Fc=\Fc_S(\Gamma)$.
Let $\Qc\subseteq\Fc^c$ be an $\Fc$-invariant interval such that $S\in\Qc$.
Let $t$ be an inclusion-normalized $1$-cocycle for the functor $\Zc_\Fc^\Qc$ and let $\tau$ be the rigid map associated to $t$.
\begin{enumerate}[(a)]
\item For each $Q\in\Qc$ which is fully normalized in $\Fc$, there is $z\in Z(N_S(Q))$ such that $\tau$ is conjugation by $z$ on $N_\Gamma(Q)$.
\item If $z\in Z(S)$, and $u$ is the constant $0$-cochain defined by $u(Q)=z$ for each $Q\in\Qc$, then $du$ is inclusion-normalized, and the rigid map $v$ associated with $du$ is conjugation by $z$ on $\Gamma^*$.
\item If $\Cc\subseteq\Fc^f$ is a conjugation family for $\Fc$, and $\tau$ is the identity on $N_\Gamma(Q)$ for each $Q\in\Cc\cap\Qc$, then $\tau$ is the identity on $\Gamma^*$.
\end{enumerate}
\end{lem}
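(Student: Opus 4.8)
The plan is to deduce each part from Lemma \ref{inclusion normalized cocycle lem}, which already furnishes the rigid map $\tau$ and its multiplicativity properties, together with the saturation axioms. For part (a), fix $Q\in\Qc$ fully normalized. Since $Q$ is $\Fc$-centric and fully normalized, it is fully automized and receptive, and the restriction of $\tau$ to $N_\Gamma(Q)$ is a bijection. First I would show that $\tau$ normalizes $Q$: for $g\in N_\Gamma(Q)$ the conjugation $c_g$ restricts to an $\Fc$-automorphism of $Q$, so by Lemma \ref{inclusion normalized cocycle lem}(a) the correction factor $t([c_g])$ lies in $Z(Q)\leq N_S(Q)$, whence $g^\tau=t([c_g])g\in N_\Gamma(Q)$. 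Thus $\tau$ restricts to a bijection of $N_\Gamma(Q)$ which, by part (b) of that lemma, is multiplicative on this subgroup and is the identity on $N_S(Q)$. Next I would argue that $\tau$ is an automorphism of $N_\Gamma(Q)$ fixing $N_S(Q)$ pointwise; a standard argument (as in \cite[Lemma 5.2]{GL}) then identifies such an automorphism with conjugation by an element $z\in C_\Gamma(N_S(Q))$. Finally, since $N_S(Q)$ is a Sylow $p$-subgroup of $N_\Gamma(Q)$ and $\tau$ preserves $S$-conjugacy, one can adjust $z$ so that $z\in N_S(Q)$, and then $z\in C_{N_S(Q)}(N_S(Q))=Z(N_S(Q))$, as required.

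For part (b), let $z\in Z(S)$ and let $u$ be the constant $0$-cochain with value $z$. Since $z$ is central in $S$ and $S\in\Qc$, inclusions $\iota_P^Q$ between members of $\Qc$ are sent by $du$ to $u(Q)^{(\iota)^{-1}}u(P)^{-1}=z\cdot z^{-1}=1$ using formula \eqref{0-chainrule} and the fact that $z$ is fixed by every map in $\Fc$ between subgroups containing it; hence $du$ is inclusion-normalized. Its rigid map $v$ is given by $g^v=du([c_g])\,g$, and $du([c_g])=z^{c_g^{-1}}z^{-1}$; since $z\in Z(S)$ and, for $g\in\Gamma^*$, the relevant subgroups contain $z$ and are sent to subgroups still containing $z$, this equals $g^{-1}zg\cdot z^{-1}$, so $g^v=g^{-1}zg\cdot z^{-1}\cdot g = z^{-1}(zg)z\cdot\ldots$; carefully expanding, $g^v=z^{-1}gz$, i.e. $v$ is conjugation by $z$ on $\Gamma^*$.

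For part (c), suppose $\Cc\subseteq\Fc^f$ is a conjugation family and $\tau$ is the identity on $N_\Gamma(Q)$ for each $Q\in\Cc\cap\Qc$. By Lemma \ref{inclusion normalized cocycle lem}(c), it suffices to show $\tau$ is the identity on all of $\Gamma^*$; equivalently, by multiplicativity (part (b) of that lemma), that $t([c_g])=1$ for every $g\in\Gamma^*$ such that $c_g$ restricts suitably. Given such a $g$, the associated morphism of $\Fc$ between subgroups in $\Qc$ factors, by the conjugation-family property, as a composite of restrictions of $\Fc$-automorphisms of members of $\Cc$; one may further arrange (using $\Fc$-invariance of $\Qc$, that $\Qc$ is an interval with $S\in\Qc$, and that $\Cc\subseteq\Fc^f$) that the members of $\Cc$ occurring all lie in $\Qc$. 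Using Lemma \ref{inclusion normalized cocycle lem}(a) to pass freely along inclusions, and the 1-cocycle identity \eqref{1-chainrule} together with the hypothesis $t([c_h])=1$ for $h\in N_\Gamma(Q)$, $Q\in\Cc\cap\Qc$, one shows inductively along the factorization that $t([c_g])=1$. Hence $\tau$ is the identity on $\Gamma^*$.

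The main obstacle is part (a): extracting the element $z\in Z(N_S(Q))$ from the fact that $\tau$ is a bijection of $N_\Gamma(Q)$ fixing the Sylow $p$-subgroup $N_S(Q)$ pointwise. One must verify that $\tau$ is genuinely a group automorphism of $N_\Gamma(Q)$ (not merely multiplicative along chains with a fixed $\Qc$-witness), which requires checking that every pair of elements of $N_\Gamma(Q)$ admits a common witness $Q\in\Qc$ — here $\Fc$-centricity of $Q$ and the interval property of $\Qc$ are used — and then invoking the structural fact that a bijective homomorphism of a (virtually discrete $p$-toral) group fixing a Sylow $p$-subgroup pointwise is inner, effected by a $p$-element centralizing that Sylow subgroup. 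The subtlety is purely that the ambient group is virtually discrete $p$-toral rather than finite, but Lemma \ref{FrattiniArg} and the fact that $N_S(Q)$ is Sylow in $N_\Gamma(Q)$ (because $Q\unlhd N_\Gamma(Q)$ is $\Fc$-centric, hence $D=Z(Q)$-type centralizer arguments apply) supply what is needed.
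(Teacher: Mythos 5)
Parts (b) and (c) follow essentially the paper's line of argument (in (b) you have a sign slip in computing $z^{c_g^{-1}}$, but the conclusion is the same; in (c) the correct citation for the multiplicativity is part (b), not (c), of Lemma~\ref{inclusion normalized cocycle lem}, and the paper's factorization via $T_i\in\Cc$ with $T_i\in\Qc$ because $\Qc$ is closed under overgroups is exactly what you sketch). Part~(a), however, contains a genuine gap.

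You reduce to the claim that ``a bijective homomorphism of a virtually discrete $p$-toral group fixing a Sylow $p$-subgroup pointwise is inner, effected by a $p$-element centralizing that Sylow subgroup.'' This is false as a statement about abstract automorphisms: for example, inversion on a cyclic group of order $3$ fixes the (trivial) Sylow $2$-subgroup pointwise but is not inner. Nothing about $\Fc$-centricity, the interval property of $\Qc$, or the Frattini argument repairs this; what is missing is that you are \emph{not} dealing with an arbitrary automorphism of $N_\Gamma(Q)$ fixing $N_S(Q)$, but with one that arises from a $1$-cocycle on the orbit category. The paper's proof uses exactly this additional structure: by the proof of \cite[Lemma 1.2]{OV2} there is an exact sequence identifying the kernel of $\Out(N_\Gamma(Q),Q)\to\Out(Q)$ with $H^1(\Out_\Fc(Q);Z(Q))$, and $\tau|_{N_\Gamma(Q)}$ corresponds to the class of $t|_{\Out_\Fc(Q)}$. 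Because $Q$ is fully normalized (hence fully automized), $\Out_S(Q)\in\Syl_p(\Out_\Fc(Q))$, so restriction $H^1(\Out_\Fc(Q);Z(Q))\to H^1(\Out_S(Q);Z(Q))$ is injective by transfer; and $t$ vanishes on $\Out_S(Q)$ since $t$ is inclusion-normalized. Hence the class of $t|_{\Out_\Fc(Q)}$ is zero, which is what forces $\tau|_{N_\Gamma(Q)}$ to be inner, and then conjugation by an element of $Z(N_S(Q))$ because $\tau$ is the identity on $N_S(Q)$. This cohomological step is the actual content of part~(a); it cannot be replaced by the structural fact you invoked, because that fact does not hold.
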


\begin{proof}
Let $Q\in\Qc$ be as in (a). From Lemma \ref{inclusion normalized cocycle lem} (b), $\tau$ induces an automorphism of $N_\Gamma(Q)$ that is the identity on $N_S(Q)$. In the proof of \cite[Lemma 1.2]{OV2} it is shown that there is a commutative diagram
\begin{equation}\label{diagram1}
\xymatrix{
1\ar[r] & Z^1(\Out_\Fc(Q);Z(Q))\ar[r]^{\tilde{\eta}}\ar[d] & \Aut(N_\Gamma(Q),Q)\ar[r]\ar[d] & \Aut(Q)\ar[d]\\
1\ar[r] & H^1(\Out_\Fc(Q);Z(Q))\ar[r]^\eta & \Out(N_\Gamma(Q),Q) \ar[r] & \Out(Q)
}
\end{equation}
with exact rows, where $\Aut(N_\Gamma(Q),Q)$, respectively $\Out(N_\Gamma(Q),Q)$  is the subgroup of automorphisms, respectively outer-automorphisms, of $N_\Gamma(Q)$ that leave $Q$ invariant and where $\tilde{\eta}$ maps the restriction of $t$ (to $\Out_\Gamma(Q)$) to the restriction of $\tau$ to $N_\Gamma(Q)$. The restriction map $H^1(\Out_\Fc(Q);Z(Q))\rightarrow H^1(\Out_S(Q);Z(Q))$ is injective since $\Out_S(Q)$ is a Sylow $p$-subgroup of $\Out_\Fc(Q)$ ($Q$ is fully normalized and thus fully automized by \ref{saturation} (I)). Since $t$ is zero on $\Out_S(P)$, $t$ represents the zero class in $H^1(\Out_\Fc(Q);Z(Q))$. It  follows from \eqref{diagram1} that $\tau$ induces an inner automorphism of $N_\Gamma(Q)$. As $Q\in\Fc$ and $\tau$ is the identity on $N_S(Q)$, $\tau$ is conjugation by an element in $Z(N_S(Q))$.

With $u$ as in (b), we see that $du([\iota_P^Q])=u(Q)u(P)^{-1}=zz^{-1}$, for any inclusion $\iota_P^Q$ with $P$ in $\Qc$ (and when $P\not\in\Qc$ by definition of the differential). Moreover, for $g\in\Gamma^*$,
\[g^v=du([c_g])g=z^{-1}z^{g^{-1}}g=g^z\]
Thus (b) holds.

Fix a conjugacy family $\Cc\subseteq\Fc^f$ and assume that $\tau$ is the identity on$N_\Gamma(T)$ for every $T\in\Cc\cap\Qc$. Fix $g\in\Gamma^*$ and choose $Q\in\Qc$ such that $Q^g\leq S$. Then there is $n\geq 1$, subgroups $T_1,T_2,\dots,T_n\in\Cc$, and elements $g_i\in N_\Gamma(T_i)$ such that $g=g_1g_2\cdots g_n$, $Q\leq T_1$ and $Q^{g_1\cdots g_i}\leq T_{i+1}$ for each $1\leq i\leq n$. As $\Qc$ is $\Fc$-invariant and closed with respect to overgroups, $T_i\in\Qc$ for each $i$. Since $\tau$ fixes $g_i$ for each $i$ by assumption, $\tau$ fixes $g$ by Lemma \ref{inclusion normalized cocycle lem}(b).
\end{proof}

\begin{thm}[{cf. \cite[Theorem 5.5]{GL}}]\label{5.4}
Let $(\Gamma,S,Y)$ be a reduced setup for the prime 2. Set $D=Z(Y)$, $\Fc=\Fc_S(\Gamma)$, $G=\Gamma/C_\Gamma(D)$, $\A=\A_D(G)^\circ\cup\hat{\A}_D(G)^\circ$, $\T=\T_D(G)$, $\Bc=\A\smallsetminus \T$ and 
\[\Rc=\{P\in\Sc_p(S)_{\geq Y}\mid J_\A(P)=Y\}.\]
Assume $\Bc\neq \emptyset$. Then, $L^2(\Fc;\Rc)=0$.
\end{thm}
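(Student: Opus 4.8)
The plan is to translate the statement into the vanishing of a first higher limit and then to run the rigid-map argument of Glauberman and Lynd, the Norm Arguments of Subsection~\ref{normargp=2 section} supplying the essential input. First I would check that $\Rc$ and its complement $\Qc:=\Sc_p(S)_{\geq Y}\smallsetminus\Rc=\{P\in\Sc_p(S)_{\geq Y}\mid J_\A(P)\neq Y\}$ are $\Fc$-invariant intervals in $\Fc^c$ with $\Qc\cup\Rc=\Fc_{\geq Y}$ and $S\in\Qc$: $\Fc$-invariance follows from the $G$-invariance of $\A$ together with the equivariance $J_\A(P^\varphi)=J_\A(P)^\varphi$; the interval property from the monotonicity of $J_\A$ (Remark~\ref{inequalitiesforJ}); and $S\in\Qc$ because $\Bc\neq\emptyset$ forces $\A\neq\emptyset$, hence $\langle\A\cap\overline{S}\rangle\neq1$ and $J_\A(S)\neq Y$. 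Then $(\Qc,\Rc)$ satisfies the hypotheses of Lemma~\ref{exactseq}, so part~(b) with $k=2$ gives $L^2(\Fc;\Rc)\cong L^1(\Fc;\Qc)$, and it suffices to prove $L^1(\Fc;\Qc)=0$.

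Next, by Lemma~\ref{inclusion normalized cocycle lem} a given class in $L^1(\Fc;\Qc)$ is represented by an inclusion-normalized $1$-cocycle $t$ for $\Zc_\Fc^\Qc$, with associated rigid map $\tau\colon\Gamma^*\to\Gamma^*$; by Lemma~\ref{inclusion normalized cocycle lem}(c) it is enough, after replacing $t$ by a cohomologous cocycle, to show that $\tau$ is the identity on $\Gamma^*$. Put $J:=J_\A(S)$. Since $\A$ is $G$-invariant, $\langle\A\cap\overline{S}\rangle\trianglelefteq\overline{S}$, so $J\trianglelefteq S$, and $J$ is $\Fc$-weakly closed (if $\overline{J}^{g}\leq\overline{S}$, then every member of $\A$ generating $\overline{J}^{g}$ already lies in $\overline{S}$, whence $\overline{J}^{g}\leq\overline{J}$), hence fully normalized with $N_S(J)=S$ and $J\in\Qc$. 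By Lemma~\ref{inclusion normalized cocycle lem2}(a), $\tau$ is conjugation by some $z\in Z(N_S(J))=Z(S)$ on $N_\Gamma(J)$; subtracting from $t$ the coboundary of the constant $0$-cochain with value $z$ (Lemma~\ref{inclusion normalized cocycle lem2}(b)), we may assume $\tau=\Id$ on $N_\Gamma(J)$.

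Now regard $J_\A$ as an $\Fc$-conjugacy functor (it restricts to one on the $\Fc$-centric subgroups, all of which contain $Y$ in a reduced setup). By Theorem~\ref{well placed subgroup thm} the subgroups of $S$ that are well-placed with respect to $J_\A$ form a conjugation family $\Cc\subseteq\Fc^f$, so by Lemma~\ref{inclusion normalized cocycle lem2}(c) it remains to prove $\tau|_{N_\Gamma(Q)}=\Id$ for every $Q\in\Cc\cap\Qc$. For such a $Q$ the chain $Q=W_1(Q),N_S(W_1(Q)),W_2(Q),N_S(W_2(Q)),\dots$ consists of fully normalized subgroups lying in $\Qc$ (using that each $W_i(Q)\geq Y$ and the monotonicity and idempotence of $J_\A$), has non-decreasing $S$-normalizers, and stabilizes at $J$ by Lemma~\ref{conj functor lem}(b); one then argues by downward induction along it, the top being handled by the previous paragraph.

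This inductive step is where the Norm Arguments enter and where I expect the main obstacle to lie. Fixing $K=W_i(Q)$ in the chain, Lemma~\ref{inclusion normalized cocycle lem2}(a) writes $\tau|_{N_\Gamma(K)}$ as conjugation by an element $z_K\in Z(N_S(K))\leq D=Z(Y)$, and one must force $z_K\in C_D(N_\Gamma(K))$; passing via Lemma~\ref{barnorm} to $G'=\overline{N_\Gamma(K)}=N_G(\overline{K})$, which acts faithfully on $D$, this amounts to $z_K\in C_D(G')$, to be deduced from the vanishing of $\tau$ at the next term of the chain together with an application of Corollary~\ref{normargumentp=2}. Here one splits into two cases according to the offender structure of $G$ on $D$: if $\A_D(G)^{\circ}_{\geq 4}\cup\hat{\A}_D(G)^\circ\neq\emptyset$, one applies Lemma~\ref{4.6} to this subcollection to obtain the norm condition~\eqref{4.2}; if that subcollection is empty, then $\hat{\A}_D(G)=\emptyset$ (Remark~\ref{4.5}) and $\A=\A_D(G)_2$, so $\Bc=\Bc_D(G)\neq\emptyset$, and one applies Lemma~\ref{4.15} for the more general norm condition~\eqref{4.3}, invoking Lemma~\ref{4.29} to see that the solitary offenders inside the terms of the chain do not obstruct the weak-closure and norm hypotheses. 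The hard part is to organise this case division so that the offender subcollections appearing in Lemmas~\ref{4.6}, \ref{4.15} and~\ref{4.29} are reconciled with the single collection $\A=\A_D(G)^\circ\cup\hat{\A}_D(G)^\circ$ defining $\Rc$, and so that Corollary~\ref{normargumentp=2} really does propagate the triviality of $\tau$ from $N_\Gamma(J)$ down the whole well-placed chain; once $\tau|_{N_\Gamma(Q)}=\Id$ for all $Q\in\Cc\cap\Qc$, Lemma~\ref{inclusion normalized cocycle lem2}(c) yields $\tau=\Id$ on $\Gamma^*$ and completes the proof.
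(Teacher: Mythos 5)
Your reduction to $L^1(\Fc;\Qc)=0$, the use of inclusion-normalized cocycles and the rigid map $\tau$, and the strategy of propagating triviality of $\tau$ along a well-placed chain via the Norm Arguments all match the paper's framework. But there is a genuine gap at precisely the point you flag as "the hard part": you propose to run the argument with $W=J_\A$ as the $\Fc$-conjugacy functor, and this does not work. The Norm Arguments (Lemma~\ref{4.6} and Lemma~\ref{4.15}) only certify control of fixed points for specific subcollections --- $\A_D(G)^\circ_{\geq 4}\cup\hat{\A}_D(G)^\circ$ in one case, a weakly closed subgroup of $J_{\Bc_D(G)}(S)$ in the other --- never for the full $J_\A(P)$ when $\A\cap\overline{P}$ contains solitary order-2 offenders. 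In particular, along the chain you describe there will be subgroups $S^*=N_S(W_{i-1}(Q))$ with $\A_{\geq4}\cap\overline{S^*}=\emptyset$ even when some member of $\A$ globally has order $\geq 4$; at such $S^*$ you cannot invoke Lemma~\ref{4.6}, and $J_\A(S^*)$ is not the subgroup for which Lemma~\ref{4.15} applies because it is generated by the full $\A\cap\overline{S^*}$, including the solitary offenders.

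The paper resolves this by replacing $J_\A$ with two piecewise-defined conjugacy functors $W_1$ and $W_2$ (one per top-level case). For example, $W_1(P)$ equals $J_{\A_{\geq4}}(P)$ when $\A_{\geq4}\cap\overline{P}\neq\emptyset$, equals $J_{\A_{\geq4}\cup\Bc}(P)$ when $\A_{\geq4}\cap\overline{P}=\emptyset$ but the semisolitary offenders do not exhaust $\A\cap\overline{P}$, and only falls back to $J_\A(P)$ in the residual case --- and the proof then shows that the residual case leads to a contradiction (via Lemma~\ref{4.29} and the argument around equations~\eqref{5.5}--\eqref{5.6}). One must also verify that $W_1$ and $W_2$ are $\Fc$-conjugacy functors (for $W_2$ this needs $\Fc$-weak closure of $\Bc(S,\Gamma)$, Lemma~\ref{4.14}). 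This construction is the technical core of the theorem; without it, the inductive propagation of $\tau=\Id$ via Corollary~\ref{normargumentp=2} and fact~\ref{5.4.3} breaks down at the subgroups described above, so the proof as written is incomplete.
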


\begin{proof}
If $\Rc=\Sc_p(S)_{\geq Y}$ then,  by Lemma \ref{higherlimitswithS}(b), $L^2(\Fc;\Rc)=0$. Hence we can assume that $\Qc=\Sc_p(S)_{\geq Y}\smallsetminus \Rc\neq \emptyset$. 
By remark \ref{inequalitiesforJ}, $\Qc$ is closed with respect to overgroups, and thus $S\in\Qc$. Moreover, again by remark \ref{inequalitiesforJ}, $J_\A(J_\A(Q))=J_\A(Q)$ for every $Q\leq S$. Therefore $J_\A(Q)\in \Qc$ for every $Q\in\Qc$.

Let $\Gamma^*\subseteq \Gamma$ be the subset of those $g\in\Gamma$ for which there is $Q\in\Qc$ with $Q^g\in \Qc$. Here $\Qc$ and $\Rc$ are $\Fc$-invariant intervals that together satisfy the hypotheses of Lemma \ref{exactseq}. Then, by Lemma \ref{exactseq}(a), it is enough to show that $L^1(\Fc;\Qc)=0$ in order to conclude that $L^2(\Fc;R)=0$.
Fix a 1-cocycle $t$ for the functor $\Zc_\Fc^\Qc$. To show that $t$ is  cohomologous to $0$, we may assume by Lemma \ref{inclusion normalized cocycle lem} that $t$ is inclusion-normalized. Let $\tau:\Gamma^*\rightarrow \Gamma^*$ be the rigid map associated with $t$.

As in the proof of \cite[Theorem 5.5]{GL}, the proof splits into two cases. In Case 1, we assume that a member of $\A$ has order at least 4. In Case 2, we assume that every member of $\A$ has order 2. 

First, fix the following notations. For $X\subseteq \Gamma$, $\overline{X}$ denote its image modulo $C_\Gamma(D)$. If $P$ is a Sylow $2$-subgroup of a subgroup $H\leq\Gamma$, such that every member of $\A\cap \overline{P}$ has order $2$, define $\Bc(P,H)$ to be the collection of subgroups in $\A\cap \overline{P}$ that are not solitary in $\overline{H}$ relative to $\overline{P}$. Let $\Bc(P)$ denote the set of subgroups in $\A\cap \overline{P}$ that are semisolitary relative to $\overline{P}$, and set $\A_{\geq 4}=\{A\in\A\mid|A|\geq4\}$.

Define 
\begin{align*}
J_1(P)&=J_{\A_{\geq4}}(P); \text{ and}\\
J_2(P)&=J_{\A_{\geq4}\cup\Bc}(P).
\end{align*}
Thus $Y\leq J_1(P)\leq J_2(P)\leq J_\A(P)$ whenever $P\geq Y$. We now define mappings $W_1$ and $W_2$ on $\Sc_p(S)$, to be employed in the respective cases. In both cases, set $W_i(P)=P$ if $P$ does not contain $Y$. For $P\geq Y$, set
\[
W_1(P)=\left\lbrace 
\begin{array}{ll}
J_1(P) & \text{if } \A_{\geq 4}\cap \overline{P}\neq\emptyset;\\
J_2(P) & \text{if } \A_{\geq 4}\cap \overline{P}=\emptyset\text{ and }\Bc(P)\neq\emptyset; \text{ and}\\
J_\A(P) & \text{otherwise,}
\end{array}\right.
\]
and
\[
W_2(P)=\left\lbrace
\begin{array}{ll}
J_{\Bc(S,\Gamma)}(S) & \text{whenever } J_{\Bc(S,\Gamma)}(S)\leq P;\text{ and}\\
J_\A(P) &\text{otherwise.} 
\end{array}\right.
\]
In either case, notice that $W_i(P)\in \Qc$, and that $W_i(W_i(P))=W_i(P)$ whenever $P\in \Qc$.

Let $W$ be either $W_1$ or $W_2$, Then $W(S)$ is normal in $S$, so the restriction of $\tau$ to $N_\Gamma(W(S))$ is conjugation by an element $z\in Z(S)$ by Lemma \ref{inclusion normalized cocycle lem2}(a). Upon replacing $t$ by $tdu$ where $u$ is the constant $0$-cochain defined by $u(Q)=z^{-1}$ for each $Q\in\Qc$, and upon replacing  $\tau$ by the rigid map associated with $tdu$, we may assume by Lemma \ref{inclusion normalized cocycle lem2}(b) that

\begin{fact}\label{5.4.1}
$\tau$ is the identity on $N_\Gamma(W(S))$.
\end{fact}

Recall from the definition of $W_1$ above that $W_1(P)\neq 1$ whenever $P\neq 1$. Moreover, all subcollections used in defining $W_1$ are $G$-invariant. Therefore, $W_1$ is an $\Fc$-conjugacy functor in the sense of Definition \ref{conjugacy functor}. If $W=W_2$ and $\A=\A_D(G)_2$, $W$ is an $\Fc$-conjugacy functor as then $\Bc(S,\Gamma)$ is $\Fc$-weakly closed in $S$ by Lemma \ref{4.14}. By Theorem \ref{well placed subgroup thm}, the collection $\Cc$ of subgroups of $S$ which are well-placed with respect to $W$ forms a conjugacy family for $\Fc$. If a subgroup $P$ is well-placed with respect to $W$, then so is $W(P)$ because $W(W(P))=W(P)$.
Set
\[\Wc=\left\lbrace Q\in\Cc\cap\Qc\mid W(Q)=Q\right\rbrace.\]
If $Q\in\Cc\cap\Qc$, then $W(Q)\in\Wc$, and $N_\Gamma(Q)\leq N_\Gamma(W(Q))$ (by Definition \ref{conjugacy functor} of a conjugacy functor). Therefore, by Lemma \ref{inclusion normalized cocycle lem2}(c):
\begin{fact}\label{5.4.2}
If $\tau$ is the identity on $N_\Gamma(Q)$ for each $Q\in\Wc$, then $\tau$ is the identity on $\Gamma^*$.
\end{fact}

For each $Q\in\Wc$, the restriction of $\tau$ to $H=N_\Gamma(Q)$ is conjugation by an element $z_H\in Z(N_S(Q))$. But $Z(N_S(Q))\leq Z(Y)=D$ by Lemma \ref{inclusion normalized cocycle lem2}(a). Hence, if $N_H(W(N_S(Q)))$ fixes $z_H$, and if this normalizer controls fixed points in $H$ on $D$, then $\tau$ is the identity on $H$. This is the key point, and we record it as follows.

\begin{fact}\label{5.4.3}
Assume $Q\in \Wc$ and $H=N_\Gamma(Q)$. If $\tau$ is the identity on $N_\Gamma(W(N_S(Q)))$ and $C_D(H)=C_D(N_\Gamma(W(N_S(Q))))$ then $\tau$ is the identity on $H$.
\end{fact} 

We now distinguish between the two cases. In each, we prove that $\tau$ is the identity on $N_\Gamma(Q)$ for each $Q\in\Wc$ by induction on the index of $N_S(Q)$ in $S$. Notice that this index is always finite since $Q\geq Y$ and $Y\geq \Gamma_0$ by definition of a general setup. Fix $Q\in\Qc$ and set $S^*=N_S(Q)$ and $H=N_\Gamma(Q)$ for short.

The Norm Arguments in Section \ref{normargp=2 section} are statements about control of fixed points in $G$. Each member of $\Qc$ contains $Y=C_S(D)$ by definition and so, Lemma \ref{barnorm} provides the transition from control of fixed points by normalizers within $G$ and those within $\Gamma$. We apply Lemma \ref{barnorm} implicitly for this transition in the arguments that follows.

\begin{case}\label{case 1}
Some member of $\A$ has order at least 4.
\end{case}

Put $W=W_1$. Assume first that $S^*=S$. Since Case \ref{case 1} holds, the collection $\A_{\geq 4}:=\A_D(G)^\circ_{\geq 4}\cup \hat{\A}_D(G)^\circ$ is nonempty. Hence, $W(S^*)=J(S)$ by definition of $W$. By Lemma \ref{4.6}, \eqref{4.2} is satisfied with $(\overline{H},\overline{S},D,\A_{\geq 4}\cap \overline{H},N_{\overline{H}}(\overline{J_1(S)}))$ in the role of the five-tuple $(G,S,D,\A,H)$ of that lemma. Hence, $C_D(H)=C_D(N_H(J_1(S)))$ by Corollary \ref{normargumentp=2}. Furthermore, $\tau$ is the identity on $N_\Gamma(W(S))=N_\Gamma(J_1(S))$ by \ref{5.4.1}, so that $\tau$ is the identity on $H$ by \ref{5.4.3}.

Now assume that $S^*<S$. If $\A_{\geq 4}\cap \overline{S^*}\neq \emptyset$ (that is $J_1(S^*)>Y$), then $W(S^*)=J_1(S^*)$, and $C_D(H)=C_D(N_H(W(S^*)))$ by Lemma \ref{4.6} and Corollary \ref{normargumentp=2} as before. By Lemma \ref{conj functor lem}(c), $S^*<N_S(W(S^*))$ and thus $\tau$ is the identity on $N_\Gamma(W(S^*)))$ by induction. Thus $\tau$ is the identity on $H$ by \ref{5.4.3}.

Assume for the remainder of Case \ref{case 1} that $\A_{\geq 4}\cap \overline{S^*}$ is empty. In particular, $\hat{\A}_D(G)\cap \overline{S^*}$ is empty. However, $\A\cap \overline{S^*}$ is nonempty since $S^*> Q\in \Qc$ (if $Q\in Q$ , $J_\A(Q)> Y$), and every member of $\A\cap \overline{S^*}$ is of order 2. Moreover, 
\begin{equation}\label{5.5}
Q=W(Q)\leq J_\A(Q)\leq J_A(S^*)\qquad \text{ and }\qquad \overline{J_\A(S^*)}\,\text{ is elementary abelian}.
\end{equation}
from Lemma \ref{4.8}(c).

Assume next that $\Bc(S^*)$ is nonempty. Then $W(S^*)=J_2(S^*)>Y$. Since no member of $\Bc(S^*)$ is solitary in $\overline{H}$ relative to $\overline{S^*}$ by Remark \ref{4.13}, we see that $\Bc(S^*)\subseteq \Bc(S^*,H)$, and that
\[
W(S^*)=J_2(S^*)=J_{\Bc(S^*)}(S^*)\leq J_{\Bc(S^*,H)}(S^*).
\]
Since, by \eqref{5.5}, $\overline{J_\A(S^*)}$ is elementary abelian  and the collection of semisolitary subgroups relative to $\overline{S^*}$ is invariant under conjugation in $N_{\overline{H}}(\overline{J_\A(S^*)})$, \cite[Lemma 8.1]{GL} and Lemma \ref{barnorm} shows that $W(S^*)$ is weakly closed in $S^*$ with respect to $H$. Apply Lemma \ref{4.15} with $(\overline{H},\overline{S^*},D,\A\cap\overline{S^*},\overline{W(S^*)})$ in the role of the five-tuple $(G,S,D,\A,J^*)$ of that Lemma to obtain the hypotheses of Corollary \ref{normargumentp=2}, which gives $C_D(H)=C_D(N_H(W(S^*)))$ as before. But, since $S^*<S$, we have $S^*<N_S(W(S^*))$ from Lemma \ref{conj functor lem}(c). As $\tau$ is the identity on $N_\Gamma(W(S^*))$ by induction, we conclude by \ref{5.4.3} that $\tau$ is the identity on $H$.

Finally, assume that $\Bc(S^*)$ is empty. Then $W(S^*)=J_\A(S^*)$ by the definition of $W$, and every element of $\A\cap \overline{S^*}$ is semisolitary relative to $\overline{S^*}$. We will show that this leads to a contradiction. Since Case \ref{case 1} holds, $J_\A(S^*)<J_\A(S)$. Now $J_\A(S^*)<J_\A(N_S( J_\A(S^*)))$ (and by Lemma \ref{conj functor lem}(b)) and so, there exists $A\leq S$ with $\overline{A}\in\A\cap\overline{S}$ such that
\begin{equation}\label{5.6}
A\leq J_\A(N_S( J_\A(S^*))) \qquad \text{and}\qquad A\not\leq J_\A(S^*).
\end{equation}
It follows from the definitions that each subgroup of $S^*$ that is semisolitary relative to $S^*$ is also semisolitary relative to $J_\A(S^*)$. As $A$ normalizes $J_\A(S^*)$, we see that $\overline{A}$ permutes the elements of $\A\cap \overline{S^*}=\A\cap \overline{J_\A(S^*)}$ by conjugation. We are thus in the situation of Lemma \ref{4.29} with $\overline{J_\A(S^*)}$ in the role of $P$, $\overline{A}$ in the role of $A$ and $\A\cap\overline{S^*}$ in the role of $\T$ there. By that lemma, $\overline{A}$ normalizes every member of $\A\cap S^*$. Thus A normalizes each of their preimages in $S$. However, $Q$ is generated by the preimage of a subset of $\A\cap \overline{S^*}$ by \eqref{5.5}, where $W(Q)=J_\A(Q)$ in the current situation. Hence, $A$ normalizes $Q$. But then $A\leq J_\A(N_S(Q))=J_\A(S^*)$, and we have a contradiction to \eqref{5.6}. This completes the proof in Case \ref{case 1}.

\begin{case}\label{case 2}
Each member of $\A$ is of order 2.
\end{case}

Put $W=W_2$ and assume first that $S^*=S$. By assumption, $\A=\A_D(G)_2$, and so $\hat{A}_D(G)=\emptyset$ by Remark \ref{4.5}. Set $J=J_{\Bc(S,H)}(S)$ and note that $W(S)=J_{\Bc(S,\Gamma)}(S)$ by the definition of $W_2$. By Definition \ref{solitary}, every element of $\A$ that is solitary in $\overline{H}$ relative to $\overline{S}$ is also solitary in $G$ relative to $\overline{S}$. Thus $\Bc(S,\Gamma)\subseteq \Bc(S,H)$, and 
\[W(S)\leq J\leq J_\A(S).\]
We saw earlier (after \ref{5.4.1}) that $W(S)$ is weakly closed in $S$ with respect to $\Gamma$ whenever $\A=\A_D(G)_2$, as holds in the present case. So we may apply Lemma \ref{4.15} with $(\overline{H},\overline{S},D,\A\cap \overline{H},\overline{W(S)})$ in the role of the five-tuple $(G,S,D,\A,J^*)$ of that lemma to obtain the hypotheses of Corollary \ref{normargumentp=2}. This yields $C_D(H)=C_D(N_H(W(S)))$. Furthermore, $\tau$ is the identity on $N_\Gamma(W(S))$ by \ref{5.4.1}, so that $\tau$ is the identity on $H$ by \ref{5.4.3}.

Assume now that $S^*<S$. As $\overline{J_\A(S)}$ is elementary abelian by Lemma \ref{4.8} and $Q\in\Wc$,
\[Q=W(Q)\leq J_\A(Q)\leq J_\A(S),\]
and $\overline{J_\A(S)}$ centralizes $\overline{Q}$. Hence $J_\A(S)\leq S^*$ so that
\[J_\A(S)=J_\A(S^*).\]
Since $J_{\Bc(S,\Gamma)}(S)\leq J_\A(S)$, this shows that $W(S^*)=J_{\Bc(S,\Gamma)}(S)=W(S)$. As in the situation where $Q$ is normal in $S$, we have $\Bc(S^*,\Gamma)\subseteq\Bc(S^*,H)$ and $W(S^*)$ is weakly closed in $S^*$ with respect to $H$. Apply Lemma \ref{4.15} with $(\overline{H},\overline{S^*},D,\A\cap H,\overline{W(S^*)})$ in the role of the five-tuple $(D,S,D,\A,J^*)$ of that lemma to obtain the hypotheses of Corollary \ref{normargumentp=2}. this yields $C_D(H)=C_D(N_H(W(S^*)))$ as before. Furthermore, since $W(S^*)=W(S)$, $\tau$ is the identity on $N_\Gamma(W(S^*))$ by \ref{5.4.2}, so that $\tau$ is the identity on $H$ by \ref{5.4.3}. This concludes the proof in Case \ref{case 2}.
\end{proof}

\subsection{The transvection case}
In this section we give in Proposition \ref{6.9} a proof of \cite[Proposition 3.5]{LL} for $p=2$.

We first give a classification of finite groups with no nontrivial normal $2$-subgroups which act faithfully on a abelian discrete $2$-toral group and are generated by solitary offenders. The result is a consequence, using Lemma \ref{lem solitary}, of the classification given by Glauberman and Lynd \cite[Theorem 6.2]{GL} that was recalled here in Theorem \ref{6.2finite}. The key of the proof of Theorem \ref{6.2finite} is McLaughlin's classification of irreducible subgroups of $SL_n(2)$ generated by transvections, which stands independently from the classification of the finite simple groups.
 
Here, by a \emph{natural} $S_m$-module ($m\geq 3$), we mean the non-trivial composition factor of the standard permutation module of $S_m$ over the field with two elements. 

\begin{thm}[{\cite[Theorem 6.2]{GL}}]\label{6.2finite}
Let $G$ be a finite group, $S\in\Syl_2(G)$ and let $D$ be an abelian $2$-group on which $G$ acts faithfully. Set $\T=\T_D(G)$. Assume that $O_2(G)=1$ and that $G=\langle \T\rangle$ is generated by its solitary offenders. Then there exist a positive integer $r$ and subgroups $E_1,E_2,\dots,E_r$ of $G$ such that,
\begin{enumerate}[(a)]
\item $G=E_1\times \cdots\times E_r$, $\T=(\T\cap E_1)\cup\cdots\cup(\T\cap E_r)$, and $E_i\cong S_{m_i}$ with $m_i$ odd for each $i$; and
\item $D=V_1\times\dots\times V_r\times C_D(G)$ with $V_i=[D,E_i]$ a natural $S_{m_i}$-module, and with $[V_i,E_j]=1$ for $j\neq 1$. 
\end{enumerate}
\end{thm}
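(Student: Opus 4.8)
Since this statement is \cite[Theorem 6.2]{GL}, the proof in the present paper will just be a reference; the plan of the argument one would carry out (that of \cite{GL}) is as follows. The strategy is to strip away the centralized part of $D$, reduce to the case where $D$ is an irreducible $\F_2G$-module, apply McLaughlin's classification of the irreducible subgroups of $SL_n(2)$ generated by transvections, and then discard every possibility except odd-degree symmetric groups by exploiting how rigid the defining condition of a solitary offender is.

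First I would reduce to $D$ elementary abelian. A solitary offender $T=\langle\tau\rangle$ has order $2$ and, by Lemma \ref{4.8}(d) applied to the group $L$ of Definition \ref{solitary}, $[D,L]$ is elementary abelian of order $4$ with $D=[D,L]\times C_D(L)$; since $G=\langle\T\rangle$, this forces $[D,G]$ to be elementary abelian. Passing from $D$ to $V=\Omega_1(D)$ and invoking Definition \ref{solitary}(iii) exactly as in the proof of Lemma \ref{lem solitary} shows that $\T_D(G)=\T_V(G)$ and that the hypotheses $O_2(G)=1$ and $G=\langle\T\rangle$ are unaffected, while the summand $C_D(G)$ in the conclusion is the complement of $[D,G]$ that is carried along unchanged. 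So I may assume $D=[D,G]$ is elementary abelian with $C_D(G)=1$, re-adjoining the $C_D(G)$ factor at the end.

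Second, I would reduce to the irreducible case and read off the direct product. Each solitary offender contributes the $2$-dimensional submodule $[D,L]$, so a Clifford-theoretic analysis of how the members of $\T$ permute the irreducible constituents of $D$ — each such offender normalizing all but one of them and centralizing the rest — both reduces the problem to the case $D$ irreducible and, in general, produces the splitting $G=E_1\times\cdots\times E_r$ with $E_i=\langle\T\cap E_i\rangle$ and $D=V_1\times\cdots\times V_r\times C_D(G)$, $V_i=[D,E_i]$, $[V_i,E_j]=1$ for $j\neq i$; the $E_i$ commute and meet trivially because each is generated by offenders centralizing every $V_j$ with $j\neq i$ and $G$ acts faithfully.

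Third, suppose $D$ is irreducible. Then $G$ is an irreducible subgroup of $SL(D)\cong SL_n(2)$ generated by transvections, so McLaughlin's classification \cite{McL69} says $G$ is, up to the identification afforded by its faithful action, one of $SL_n(2)$, a symplectic group $Sp_n(2)$, an orthogonal group $O_n^{\pm}(2)$, or a symmetric group on a natural module, with finitely many small-dimensional exceptions. The final step, which is also the main obstacle, is to eliminate every case except $G\cong S_m$ with $m$ odd and $D$ the natural module: this is \cite[Lemma 6.1]{GL}, and it rests on the delicate fact that the splitting $J=T\times C_J(L)$ with $C_J(L)=\langle(\A\cap S)\smallsetminus\{T\}\rangle$ demanded by Definition \ref{solitary}(ii), together with $D=[D,L]\times C_D(L)$ and $[D,L,C_J(L)]=1$ from (iii), cannot be realized in $SL_n(2)$ for $n\geq 3$, in the symplectic and orthogonal groups, or in even-degree symmetric groups — in each of these a transvection lying in a Sylow $2$-subgroup admits no complement inside $J$ assembled from the remaining transvections of that Sylow subgroup — whereas for $G=S_m$ with $m$ odd one takes $L$ to move only the two points transposed by $\tau$ together with the unique point fixed by a Sylow $2$-subgroup, such a fixed point existing precisely because a Sylow $2$-subgroup of $S_m$ is intransitive, which is also what forces $m$ odd. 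Combining this with the gluing step of the previous paragraph yields the theorem.
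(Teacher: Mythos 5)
The paper itself offers no proof here: Theorem \ref{6.2finite} is stated purely as a citation of \cite[Theorem 6.2]{GL}, and you correctly recognize this. Your sketch of the underlying argument from \cite{GL} is consistent with every hint the paper drops in the surrounding discussion — the reduction to an elementary abelian module where solitary offenders become transvections, the central role of McLaughlin's classification of irreducible transvection subgroups of $SL_n(2)$, the elimination step isolated as \cite[Lemma 6.1]{GL}, and the explicit remark that $SL_n(2)$ for $n\geq 3$ and even-degree symmetric groups admit no solitary offenders on their natural modules because condition (ii) of Definition \ref{solitary} fails — so the proposal takes essentially the same approach as the paper.
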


\begin{cor}\label{6.2}
Let $G$ be a finite group, $S\in\Syl_2(G)$ and let $D$ be an abelian discrete $2$-toral group on which $G$ acts faithfully. Set $\T=\T_D(G)$. Assume that $O_2(G)=1$ and that $G=\langle \T\rangle$ is generated by its solitary offenders. Then there exist a positive integer $r$ and subgroups $E_1,E_2,\dots,E_r$ of $G$ such that,
\begin{enumerate}[(a)]
\item $G=E_1\times \cdots\times E_r$, $\T=(\T\cap E_1)\cup\cdots\cup(\T\cap E_r)$, and $E_i\cong S_{m_i}$ with $m_i$ odd for each $i$; and
\item $D=V_1\times\dots\times V_r\times C_D(G)$ with $V_i=[D,E_i]$ a natural $S_{m_i}$-module, and with $[V_i,E_j]=1$ for $j\neq 1$. 
\end{enumerate}
\end{cor}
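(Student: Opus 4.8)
The plan is to reduce to the finite case (Theorem~\ref{6.2finite}) by replacing $D$ with a sufficiently large finite layer $\Omega_n(D)$, and then transport the resulting decomposition back to $D$. First I would invoke Lemma~\ref{lem solitary} together with Lemma~\ref{action on discrete=action on finite} to fix an integer $n$ large enough that simultaneously $D=\Omega_n(D)D_0$, $C_G(\Omega_n(D))=C_G(D)=1$, $\A_D(G)=\A_{\Omega_n(D)}(G)$, and $\T:=\T_D(G)=\T_{\Omega_n(D)}(G)$. Since $G$ acts faithfully on the finite abelian $2$-group $\Omega_n(D)$, $O_2(G)=1$, and $G=\langle\T\rangle=\langle\T_{\Omega_n(D)}(G)\rangle$, Theorem~\ref{6.2finite} applies to this action and yields $r$ and subgroups $E_1,\dots,E_r\le G$ with $G=E_1\times\cdots\times E_r$, $\T=(\T\cap E_1)\cup\cdots\cup(\T\cap E_r)$, each $E_i\cong S_{m_i}$ with $m_i$ odd, and $\Omega_n(D)=W_1\times\cdots\times W_r\times C_{\Omega_n(D)}(G)$, where $W_i=[\Omega_n(D),E_i]$ is a natural $S_{m_i}$-module with $[W_i,E_j]=1$ for $j\ne i$. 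Part~(a) of the corollary is then immediate, since the $E_i$ and $\T$ are literally the same data in both settings.

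For part~(b), the key observation is that $G$, and hence each $E_i=\langle\T\cap E_i\rangle$, is generated by offenders, so $D_0\le C_D(G)\le C_D(E_i)$ by the definition of an offender. Consequently $[D,E_i]=[\Omega_n(D)D_0,E_i]=[\Omega_n(D),E_i]=W_i$, so putting $V_i=[D,E_i]$ we obtain that $V_i$ is a natural $S_{m_i}$-module and $[V_i,E_j]=1$ for $j\ne i$. It remains to upgrade the decomposition of $\Omega_n(D)$ to one of $D$. As $\Omega_n(D)$ is characteristic in $D$ we have $C_{\Omega_n(D)}(G)=C_D(G)\cap\Omega_n(D)$, and combining this with $D=\Omega_n(D)D_0$ and $D_0\le C_D(G)$ gives $C_D(G)=C_{\Omega_n(D)}(G)\,D_0$. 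Therefore
\[
D=\Omega_n(D)D_0=\bigl(W_1\cdots W_r\,C_{\Omega_n(D)}(G)\bigr)D_0=V_1\cdots V_r\,C_D(G),
\]
and a modular-law argument, using $V_i\le\Omega_n(D)$, $D_0\cap\Omega_n(D)=\Omega_n(D_0)\le C_{\Omega_n(D)}(G)$, and the internal direct product decomposition of $\Omega_n(D)$, shows that $V_i\cap\bigl(\prod_{j\ne i}V_j\bigr)C_D(G)=1$ for each $i$; hence the product is direct, $D=V_1\times\cdots\times V_r\times C_D(G)$.

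The only genuinely new ingredient is the reduction step, which rests entirely on Lemma~\ref{lem solitary} (whose proof in turn uses McLaughlin's classification only through Theorem~\ref{6.2finite}). The main obstacle is thus the bookkeeping in part~(b): checking that the $G$-module direct product decomposition of the finite layer $\Omega_n(D)$ descends correctly to $D$, i.e.\ that $C_D(G)=C_{\Omega_n(D)}(G)D_0$ and that the product $V_1\cdots V_r\,C_D(G)$ stays direct once the divisible part $D_0$ is reabsorbed into the centralizer factor.
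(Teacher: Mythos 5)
Your proof is correct and follows essentially the same route as the paper: reduce to the finite case via Lemmas~\ref{action on discrete=action on finite} and~\ref{lem solitary}, apply Theorem~\ref{6.2finite} to $\Omega_n(D)$, then push the decomposition back to $D$ using $D_0\le C_D(G)$ and $[D,E_i]=[\Omega_n(D),E_i]$. The only difference is that you spell out the final bookkeeping (the identity $C_D(G)=C_{\Omega_n(D)}(G)\,D_0$ and the modular-law check that the product stays direct), which the paper leaves implicit with ``Thus the corollary holds''; this extra detail is accurate and harmless.
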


\begin{proof}
By Lemma \ref{action on discrete=action on finite} and Lemma \ref{lem solitary}, if $n$ is large enough we have $D=D_0\Omega_n(D)$ and $\T_D(G)=\T_{\Omega_n(D)}(G)$. In particular, under the hypothesis of Corollary \ref{6.2}, we are under the hypothesis of Theorem \ref{6.2finite} with $\Omega_n(D)$ in the role of $D$.
Thus, by Theorem \ref{6.2finite}, $G=E_1\times \cdots\times E_r$, $\T=(\T\cap E_1)\cup\cdots\cup(\T\cap E_r)$, and $E_i\cong S_{m_i}$ with $m_i$ odd for each $i$. We also have by Theorem \ref{6.2finite} that $\Omega_n(D)=V_1\times\dots\times V_r\times C_{\Omega_n(D)}(G)$ where $V_i=[\Omega_n(D),E_i]$ is a natural $S_{m_i}$-module, and with $[V_i,E_j]=1$ for $j\neq 1$.  
Moreover, as $D_0\leq C_D(T)$ for every $T\in\T$ and $G=\langle\T\rangle$, $D_0\leq C_D(G)$ and, for every $i$, $V_i=[\Omega_n(D),E_i]=[D,E_i]$. Thus the corollary holds.
\end{proof}

\begin{prop}[{cf. \cite[Proposition 6.9]{GL}}]\label{6.9}
Let $(\Gamma,S,Y)$ be a general setup for the prime 2. Set $\Fc=\Fc_S(\Gamma)$, $D=Z(Y)$, and $G=\Gamma/C_\Gamma(D)$. Let $\Rc\subseteq \Sc_p(S)_{\geq Y}$ be an $\Fc$-invariant interval such that for each $Q\in\Sc_p(S)_{\geq Y}$, $Q\in\Rc$ if and only if $ J_{\A_D(G)}(Q)=R$. Then $L^k(\Fc;R)=0$ for all $k\geq 2$.
\end{prop}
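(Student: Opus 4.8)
The plan is to imitate the proof of Proposition~\ref{oddcase}, with Theorem~\ref{5.4} and the $p=2$ apparatus of Subsections~\ref{normargp=2 section}--\ref{section conj functand fam} playing the role of the odd-prime input Proposition~\ref{oddcase k=1}. So I would argue by contradiction: let $(\Gamma,S,Y,\Rc,k)$ be a counterexample for which $(k,\ord(\Gamma),|\Gamma/Y|,|\Rc|)$ is minimal in the lexicographic order, and set $\Fc=\Fc_S(\Gamma)$, $D=Z(Y)$, $G=\Gamma/C_\Gamma(D)$. Exactly as in the proof of Proposition~\ref{oddcase}, Steps~1--3 of the proof of \cite[Proposition 3.5]{LL} reduce us to the case in which $(\Gamma,S,Y)$ is a \emph{reduced} setup, $\Rc=\{R\in\Sc_p(S)_{\geq Y}\mid J_{\A_D(G)}(R)=Y\}$, and $k=2$; the reduction of the degree down to $2$ (rather than to $1$, as happens for $p$ odd) proceeds via the isomorphism $L^{k}(\Fc;\Rc)\cong L^{k-1}(\Fc;\Qc)$ of Lemma~\ref{exactseq}(b) applied to the complementary interval $\Qc=\Sc_p(S)_{\geq Y}\smallsetminus\Rc$, which is again $\Fc$-invariant and of the required form since $J_{\A_D(G)}(J_{\A_D(G)}(Q))=J_{\A_D(G)}(Q)$. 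Put $\A=\A_D(G)^\circ\cup\hat{\A}_D(G)^\circ$; as in Proposition~\ref{oddcase} one also has $\Rc=\{R\mid J_\A(R)=Y\}$.

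Next I would split according to whether some minimal best offender fails to be solitary. If $\Bc:=\A\smallsetminus\T_D(G)\neq\emptyset$, then Theorem~\ref{5.4} applies directly and gives $L^2(\Fc;\Rc)=0$, contradicting the choice of counterexample. So assume $\Bc=\emptyset$, i.e.\ $\A\subseteq\T_D(G)$. Then every member of $\A$ has order $2$ by Definition~\ref{solitary}, so $\hat{\A}_D(G)=\emptyset$ by Remark~\ref{4.5}, whence $\A=\A_D(G)^\circ=\A_D(G)_2=\T_D(G)$. If this set is empty then $\Rc=\Sc_p(S)_{\geq Y}\ni S$ and $L^2(\Fc;\Rc)=0$ by Lemma~\ref{higherlimitswithS}(b); so we may assume $\T_D(G)\neq\emptyset$. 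This is the transvection case.

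In the transvection case I would first reduce to the situation where $G$ is generated by its solitary offenders. Set $G_1=\langle\T_D(G)\rangle\unlhd G$, let $\Gamma_1\leq\Gamma$ be its preimage, and put $S_1=S\cap\Gamma_1$ and $\Fc_1=\Fc_{S_1}(\Gamma_1)$. One checks that $(\Gamma_1,S_1,Y)$ is again a general setup with $C_{\Gamma_1}(D)=C_\Gamma(D)$; that $\A_D(G_1)_2=\A_D(G)_2\cap G_1=\A$ and $\T_D(G_1)=\T_D(G)$, so that $G_1=\langle\T_D(G_1)\rangle$; that $J_{\A_D(G)}(Q)=J_{\A_D(G_1)}(Q)$ for all $Q\leq\Gamma_1$; and hence that $\Rc_1:=\{R\in\Sc_p(S_1)_{\geq Y}\mid J_{\A_D(G_1)}(R)=Y\}$ again satisfies the hypotheses of the proposition, with complement $\Qc_1=\{Q\in\Qc\mid Q\leq\Gamma_1\}$. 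Since $S\in\Qc$ and $\Gamma_1\cap Q\in\Qc$ whenever $Q\in\Qc$ (because $\A\subseteq G_1$), Lemma~\ref{2.7} gives an injection $L^1(\Fc;\Qc)\hookrightarrow L^1(\Fc_1;\Qc_1)$; combining this with the two instances $L^1(\Fc;\Qc)\cong L^2(\Fc;\Rc)$ and $L^1(\Fc_1;\Qc_1)\cong L^2(\Fc_1;\Rc_1)$ of Lemma~\ref{exactseq}(b) shows that if $G_1<G$, then $\ord(\Gamma_1)<\ord(\Gamma)$ and minimality of the counterexample forces $L^2(\Fc;\Rc)=0$, a contradiction. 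Hence $G=\langle\T_D(G)\rangle$ and $O_2(G)=1$, so Corollary~\ref{6.2} applies: $G=E_1\times\cdots\times E_r$ with $E_i\cong S_{m_i}$, $m_i$ odd, and $D=V_1\times\cdots\times V_r\times C_D(G)$ with $V_i=[D,E_i]$ a natural $S_{m_i}$-module.

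The remaining step --- proving $L^2(\Fc;\Rc)=0$ when $G$ has this explicit structure, i.e.\ when $\Gamma$ is an extension of $C_\Gamma(D)$ by a product of symmetric groups of odd degree acting on a sum of natural modules together with a trivial module --- is what I expect to be the main obstacle. This is precisely the point at which Glauberman and Lynd exploit the arithmetic of the natural $S_m$-module for $m$ odd, and above all the fact that a Sylow $2$-subgroup of $S_m$ fixes a point, so that every transposition lies in a copy of $S_3$ moving only three points; they then run a direct argument with a suitable conjugacy functor. I would mirror the concluding part of \cite[Proposition 6.9]{GL} for this configuration, using Lemmas~\ref{action on discrete=action on finite} and \ref{lem solitary} and the discrete Norm Argument (Corollary~\ref{normargumentp=2}) to transfer the finite statements of \cite{GL} to the present discrete $2$-toral setting. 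Because the preparatory results of \cite{GL} have already been translated in Subsections~\ref{normargp=2 section}--\ref{section conj functand fam} and the present subsections, the only genuinely new work in this last step is the bookkeeping ensuring that the finite computations of \cite{GL} are unaffected by replacing a finite $2$-group with a discrete $2$-toral group.
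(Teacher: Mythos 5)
Your outline matches the paper's own proof exactly up through the point where Corollary~\ref{6.2} is invoked: the minimal-counterexample reduction to a reduced setup with $k=2$, the invocation of Theorem~\ref{5.4} to force $\A=\T_D(G)$ (the paper's \ref{6.9.1}), the passage to $L^1(\Fc;\Qc)\neq 0$, and the Lemma~\ref{2.7} argument showing $G=\langle\T_D(G)\rangle$ (the paper's \ref{6.9.3}) are all the same moves the paper makes, and the small variants in your reasoning (going through $L^2\cong L^1$ twice rather than once, using $\ord(\Gamma_1)<\ord(\Gamma)$ explicitly) are harmless.

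However, you explicitly stop before the transvection computation and call it ``the main obstacle,'' saying only that you would ``mirror the concluding part of \cite[Proposition 6.9]{GL}.'' This is a genuine gap, and it is not a small one: it is where most of the work in the paper's proof lives. The paper fixes a nonzero inclusion-normalized 1-cocycle $t$ for $\Zc_\Fc^\Qc$ with rigid map $\tau$; proves that $r=1$ (fact~\ref{6.9.5}) by a splitting argument that adjusts $t$ by two constant coboundaries supported on the factorization $C_D(S)=C_{V_1}(S)\times\cdots\times C_{V_r}(S)\times C_D(G)$ from~\ref{6.9.4}, together with the conjugacy functor $W=J_\A$ and Lemma~\ref{inclusion normalized cocycle lem2}(c); and then, with $G=S_m$ and $V$ the natural module, identifies $\Gamma^*$ as the double coset $HK_1H$ (where $\overline{H}=N_G(\overline{A})\cong\overline{A}\rtimes S_n$ for $\overline{A}=\langle(1,2),(3,4),\dots\rangle$ and $\overline{K_1}=N_G(\langle(1,2)\rangle)$), computes $C_V(K_1\cap H)=\langle z_1,z_1'\rangle$ explicitly, and uses this to adjust $t$ so that $\tau$ is trivial on both $H$ and $K_1$ and hence on $\Gamma^*$ --- contradicting $t\neq 0$ via Lemma~\ref{inclusion normalized cocycle lem}(c). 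None of this appears in your proposal, and the ``bookkeeping'' you defer to is not as routine as the earlier translations: the $r=1$ reduction, the double-coset identification, and the coboundary adjustments each require the full apparatus of inclusion-normalized cocycles, rigid maps, and well-placed subgroups from Subsections~3.2--3.3. To complete the proof you need to actually carry out facts~\ref{6.9.4}, \ref{6.9.5}, and the concluding $S_m$ argument in the discrete $2$-toral setting rather than gesture at them.
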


\begin{proof}
Assume that the conclusion is false. Let $(\Gamma,S,Y,\Rc,k)$ be a counterexample for which the four-tuple $(k,\ord(\Gamma),|\Gamma/Y|,|\Rc|)$ is minimal in the left lexicographic ordering. Steps 1-3 in the proof of \cite[Proposition 3.5]{LL} show that $\Rc=\{R\in \Sc_p(S)_{\geq Y}\leq S\mid J_{\A_D(G)}(R)=Y\}$, $k=2$, and $(\Gamma,S,Y)$ is a reduced setup.

Let $\Qc=\Sc_p(S)_{\geq Y}\smallsetminus \Rc$ and $\A=\A_D(G)^\circ\cup \hat{\A}_D(G)^\circ$. Since $(\Gamma,S,Y)$ is a counter-example, Theorem \ref{5.4} shows:
\begin{fact}\label{6.9.1}
$\A=\T_D(G)$.
\end{fact}

That is, $\hat{\A}_D(G)$ is empty and every best offender minimal under inclusion is solitary in $G$ relative to $\overline{S}$. Since $L^2(\Fc;\Rc)\neq 0$, Lemma \ref{exactseq} implies:

\begin{fact}\label{6.9.2}
$L^1(\Fc;\Qc)\neq 0$.
\end{fact}

We prove next that 

\begin{fact}\label{6.9.3}
$G=\langle\A\rangle$.
\end{fact}
\begin{proof}
Let $G_0=\langle\A\rangle$. Let $\Gamma_0$ be the preimage of $G_0$ in $\Gamma$, set $S_0=S\cap\Gamma_0$, $\Fc_0=\Fc_{S_0}(\Gamma_0)$ and $\Qc_0=\Sc_p(S)_{\geq Y}\cap\Qc$. Then $\Gamma_0\unlhd\Gamma$ and $Y\leq S_0$. Furthermore, $(\Gamma_0,Y,S_0)$ is a reduced setup and $\Qc_0$ is an $\Fc_0$-invariant interval. Since each member of $\A$ is contained in $G_0$, we have $\Gamma_0\cap Q\in\Qc$ for each $Q\in\Qc$. By Lemma \ref{2.7}, the restriction map induces an injection $L^1(\Fc,\Qc)\rightarrow L^1(\Fc_0;\Qc_0)$ and so $L^1(\Fc_0;\Qc_0)\neq 0$ by \ref{6.9.2}. Hence $\Gamma=\Gamma_0$ by minimality of $|\Gamma|$.
\end{proof}

By \ref{6.9.3}, we are under the hypothesis of Corollary \ref{6.2}, and $G$ and its action on $D$ are described by Corollary \ref{6.2}. We adopt the notation in that corollary for the remainder of the proof. In the decomposition given by \ref{6.2}(b) each $V_i$ is $G$-invariant, and so each $V_i$ is $S$-invariant. Thus the centralizer of $S$ in $D$ factors as

\begin{fact}\label{6.9.4}
$C_D(S)=C_{V_1}(S)\times C_{V_2}(S)\times\cdots\times C_{V_r}(S)\times C_D(G).$
\end{fact}  

Fix an inclusion-normalized 1-cocyle $t$ for $\Zc_\Fc^\Qc$ representing a non-zero class in $L^1(\Fc;\Qc)$ (which is nontrivial by \ref{6.9.2}), and let $\tau:\Gamma^*\rightarrow\Gamma^*$ be the rigid map associated with $t$. We show that
\begin{fact}\label{6.9.5}
$r=1$.
\end{fact}

\begin{proof}
Assume $r>1$. Set $G_1=E_1$ and $G_2=E_2\times E_3\times\cdots\times E_r$.
For $i=1,2$, let $K_i$ be the preimage of $G_i$ in $\Gamma$ and set $\Gamma_i=K_iS$ and $\Fc_i=\Fc_S(\Gamma_i)$. We have $\Gamma_i<\Gamma$ and $(\Gamma_i,S,Y)$ is a general setup by assumption. Hence $L^1(\Fc_i;\Qc)\cong L^2(\Fc_i;\Rc)=0$ by minimality of $\ord(\Gamma)$. As the restriction of $t$ to $\Or(\Fc_i^c)$ represents the zero class and is inclusion-normalized, and since $S\in\Qc$, the restriction of $t$ to $\Or(\Fc_i^c)$ is of the form $du$ with $u$ a constant 0-cochain.  It  follows from this and from Lemma \ref{inclusion normalized cocycle lem2}(b) that there are elements $z_i\in Z(S)=C_D(S)$ 
such that $\tau$ is the conjugation by $z_i$ on $\Gamma_i^*$. By \ref{6.9.4} we can in fact choose $z_i$ such that $z_i\in [D,\Gamma_i]\leq C_D(\Gamma_{3-i})$. Set $t'=tdu$ where $u$ is the constant $0$-cochain defined by $u(P)=(z_1z_2)^{-1}$ for each $P\in\Qc$. Then by Lemma \ref{inclusion normalized cocycle lem}(b), upon replacing $t$ by $t'$ and $\tau$ by $\tau'$, we may assume that $\tau$ is the identity when restricted to $\Gamma_i^*$, for each $i=1,2$.

The objective is now to show that $\tau$ is the identity on $\Gamma^*$. Let $W$ be the $\Fc$-conjugacy functor defined by $W(P)=J_\A(P)$ for each $P\geq Y$ and by $W(P)=P$ otherwise. Let $\Cc$ be the collection of subgroups $P$ of $S$ such that $W(P)=P$ and $P$ is well-placed with respect to $W$. By Theorem \ref{well placed subgroup thm} the family of well-placed subgroups of $S$ is a conjugacy family for $\Fc$. By Lemma \ref{conj functor lem}(a), we have $N_\Gamma(P)\leq N_\Gamma(W(P))$ for each $P\leq S$. Then, by Definition of $J_\A(P)$, $W(W(P))=W(P)$. Hence the collection $\Cc$ is also a conjugacy family for $\Fc$. In particular, as $t$ is inclusion-normalized, by Lemma 5.2(c), it now suffices to show that $\tau$ is the identity on $N_\Gamma(Q)$ for each $Q\in\Cc\cap \Qc$.
 
Let $Q\in\Cc\cap\Qc$. Thus $Q=W(Q)=J_\A(Q)$ and the image $\overline{Q}$ of $Q$ in $G$ is generated by members of $\A$. It follows from Corollary \ref{6.2}(a) that $Q=Q_1Q_2$ with $Q_1\cap Q_2=Y$, where $\overline{Q_1}$ and $\overline{Q_2}$ are the projections in $G_1$ and $G_2$ of $\overline{Q}$. If it happens that $Q_i\not\in\Qc$ for $i=1$ or $2$, this means that $Q_i\in\Rc$ and thus, as $Q= J_\A(Q)$, $Q_i=Y$. Since $Y\not\in \Qc$ but $Q\in\Qc$ we may assume without loss of generality that $Q_2\in\Qc$. As $Q$ is well-placed and $W(Q)=Q$, $Q$ is fully normalized in $\Fc$.

Let $g_2\in N_\Gamma(Q_2)$, and write $g=g_1g_2$ with $g_i\in K_i$. As $(\Gamma,S,Y)$ is a reduced setup, $C_S(D)=Y$. Also, $Y\leq Q_2\leq N_\Gamma(Q)$, thus, by Lemma \ref{barnorm}, $\overline{N_\Gamma(Q_2)}=N_G(\overline{Q_2})=G_1\times N_{G_2}(\overline{Q_2})$. Since $\overline{g_1}\in G_1$ centralizes $\overline{Q_2}$, we may write $g_1=h_1c_1$ where $h_1\in C_{\Gamma_1}(Q_2/Y)\leq N_{\Gamma_1}(Q_2)$ and $c_1\in C_\Gamma(D)$. So, as $h_1\in(\Gamma_1)^*$ and $c_1g_2\in(\Gamma_2)^*$ (both sends $Q_2$ to $Q_2$), we see that $\tau$ is the identity on $N_\Gamma(Q)$ since $N_\Gamma(Q)\leq N_\Gamma(Q_2)$. We conclude that $\tau$ is the identity on $\Gamma^*$, which is a contradiction.
\end{proof}

By \ref{6.9.5}, we may fix $m=2n+1$ so that $G=E_1\cong S_m$. Let $\Omega$ be the set of even-order subsets of $\{1,2,\dots,m\}$. Identify $G$ with $S_m$ and $V:=V_1$ with $\Omega$. We may assume that $S$ stabilizes the collection $\left\lbrace\{2i-1,2i\}\mid 1\leq i\leq n\right\rbrace$. Then $\A\cap \overline{S}=\{\langle(1,2)\rangle,\langle(2,3)\rangle,\cdots, \langle(2n-1,2n)\rangle\}$ and thus:
\begin{equation}\label{Gamma^* equivalent def}
g\in\Gamma^*\qquad\text{ if and only if}\qquad\text{ there is }1\leq i, j \leq n\text{ such that }(2i-1,2i)^{\overline{g}}=(2j-1,2j) 
\end{equation}
where $\overline{g}$ is the image of $g$ in $G$.

Let $A\in\Sc_p(S)_{\geq Y}$ be such that the image $\overline{A}$ of $A$ in $G$ is $\langle (1,2),(3,4),\dots,(2n-1,2n)\rangle$. Set $H=N_\Gamma(A)$. Since $C_S(V)=Y\leq A$, by Lemma \ref{barnorm}, $\overline{H}=N_G(\overline{A})$ is isomorphic to $A\rtimes S_n$. Let also $A_1\in\Sc_p(S)_{\geq Y}$ be such that $\overline{A_i}=\langle (2i-1,2i)\rangle$ and let $K_i=N_\Gamma(A_i)$. As before, since $C_S(V)=Y\leq A$, we have $\overline{K_i}=N_G(\langle (2i-1,2i)\rangle)$. Here $A_1^\Fc=\{A_1,A_2,\dots,A_n\}=A_1^{\Fc_S(H)}$. Thus, replacing $S$ by an $H$-conjugate, we can assume that $A_1$ is fully normalized. We next show:

\begin{fact}\label{Gamma^*}
$\Gamma^*=HK_1H$. 
\end{fact}
\begin{proof}
Indeed, for each $h_1kh_2\in HK_1H$, $\overline{h_1},\overline{h_2}$ acts by conjugation on $\{(1,2),(3,4),\cdots,(2n,2n-1)\}$, and thus, if we write $(2i-1,2i)=(1,2)^{\overline{h_1}^{-1}}$ and $(2j,2j-1)=(1,2)^{\overline{h_2}}$, $(2i-1,2i)^{\overline{h_1kh_2}}=(2j-1,2j)$ and, by \eqref{Gamma^* equivalent def}, $h_1kh_2\in\Gamma^*$. 
Conversely, if $g\in \Gamma^*$, by \eqref{Gamma^* equivalent def}, there are $1\leq i,j\leq n$ such that $(2i-1,2i)^{\overline{g}}=(2j-1,2j)$. Then, if we take $h_1,h_2\in H$ such that $(2i-1,2i)=(1,2)^{\overline{h_1}^{-1}}$ and $(2j,2j-1)=(1,2)^{\overline{h_2}}$, we have $(1,2)^{\overline{h_1^{-1}gh_2^{-1}}}=(1,2)$ which implies that $h_1^{-1}gh_2^{-1}\in K_1$ and $h_1(h_1^{-1}gh_2^{-1})h_2\in HK_1H$.
\end{proof}

Now, in order to prove that $\tau$ is the identity on $\Gamma^*$, it is enough to show that $\tau$ is the identity on $H$ and $K_1$. First of all, as $A\in\Qc\cap\Fc^f$, by \ref{inclusion normalized cocycle lem2}(a) and as $A\unlhd S$, there is $z\in Z(S)$ such that the restriction of $\tau$ to $H$ is the conjugation by $z$. Hence, by Lemma \ref{inclusion normalized cocycle lem2}(b), we may adjust $t$ by a coboundary to get that $\tau$ is the identity on $H$.

We also know that $A_1\in\Qc\cap\Fc^f$ and then, by Lemma \ref{inclusion normalized cocycle lem2}(a), the restriction of $\tau$ to $K_1$ is the conjugation by an element $z$ in $V$ (recall that $D=V\times C_D(\Gamma)$). Remark also that $\tau$ is the identity on $K_1\cap H$ which implies that $z\in C_D(K_1\cap H)\subset$.
Now, 
\[\overline{K_1\cap H}=\langle(1,2)\rangle\times \left(\langle(3,4),\dots,(2n-1,2n)\rangle\rtimes S_{n-1})\right.\]
Thus, if we write $z_1=\{1,2\}\in V$ ,$z_1'=\{3,4,\dots 2n\}\in V$ and $z=\{1,2,\dots,2n\}=z_1z_1'\in V$, 
\[C_V(K_1\cap H)=\langle z_1,z_1'\rangle.\]
As $z\in C_V(H)$ and $z_1\in C_V(K_1)$, $c_{z}=c_{z_1'}$ on $K_1$ and we can, if necessary, change $t$ by the coboundary $du$ where $u$ is the constant 0-cochain defined by $u(P)=z$ for $P\in \Qc$, to get that $\tau$ is the identity on $H$ and $K$ and thus on $\Gamma^*$ by \ref{Gamma^*}.

In the end, we proved that if $t$ is an inclusion-normalized 1-cocycle for $\Zc_\Fc^\Qc$ representing a non-zero class in $H^1(\Fc;\Qc)$ then the associated rigid map $\tau:\Gamma^*\rightarrow\Gamma^*$ is the identity. This is a contradiction in view of Lemma \ref{inclusion normalized cocycle lem}(c).  
\end{proof}

%
%

\bibliography{biblio}{}

\begin{thebibliography}{AKO11}

\bibitem[AKO]{AKO}
Michael Aschbacher, Radha Kessar, and Bob Oliver.
\newblock {\em Fusion systems in algebra and topology}, volume 391 of {\em
  London Mathematical Society Lecture Note Series}.
\newblock Cambridge University Press, Cambridge, 2011.

\bibitem[BLO2]{BLO2}
Carles Broto, Ran Levi, and Bob Oliver.
\newblock The homotopy theory of fusion systems.
\newblock {\em J. Amer. Math. Soc.}, 16(4):779--856, 2003.

\bibitem[BLO3]{BLO3}
Carles Broto, Ran Levi, and Bob Oliver.
\newblock Discrete models for the {$p$}-local homotopy theory of compact {L}ie
  groups and {$p$}-compact groups.
\newblock {\em Geom. Topol.}, 11:315--427, 2007.

\bibitem[Ch]{Ch}
Andrew Chermak.
\newblock Fusion systems and localities.
\newblock {\em Acta Math.}, 211(1):47--139, 2013.

\bibitem[GL]{GL}
George Glauberman and Justin Lynd.
\newblock Control of fixed points and existence and uniqueness of centric
  linking systems.
\newblock {\em Invent. math.}, pages 1--44, 2016.

\bibitem[Gl71]{Gl71}
George Glauberman.
\newblock Global and local properties of finite groups.
\newblock In {\em Finite simple groups ({P}roc. {I}nstructional {C}onf.,
  {O}xford, 1969)}, pages 1--64. Academic Press, London, 1971.

\bibitem[LL]{LL}
Ran Levi and Assaf Libman.
\newblock Existence and uniqueness of classifying spaces for fusion systems
  over discrete {$p$}-toral groups.
\newblock {\em J. Lond. Math. Soc. (2)}, 91(1):47--70, 2015.

\bibitem[McL69]{McL69}
Jack McLaughlin.
\newblock Some subgroups of {${\rm SL}_{n}\,({\bf F}_{2})$}.
\newblock {\em Illinois J. Math.}, 13:108--115, 1969.

\bibitem[O5]{O5}
Bob Oliver.
\newblock Existence and uniqueness of linking systems: Chermak's proof via
  obstruction theory.
\newblock {\em Acta Math.}, 211(1):141--175, 2013.

\bibitem[OV2]{OV2}
Bob Oliver and Joana Ventura.
\newblock Saturated fusion systems over 2-groups.
\newblock {\em Trans. Amer. Math. Soc.}, 361(12):6661--6728, 2009.

\end{thebibliography}
\bibliographystyle{abstract}

%
%

\end{document}